\newtheorem{thm}{Theorem}[section]
\newtheorem{lem}[thm]{Lemma}
\newtheorem{cor}[thm]{Corollary}
\newtheorem{prop}[thm]{Proposition}
\newtheorem{rem}[thm]{Remark}
\numberwithin{equation}{section}
\newcommand{\Wqb}{W_{q,\mathcal{B}}}
\newcommand{\R}{\mathbb{R}}
\newcommand{\mS}{\mathbb{S}}
\newcommand{\A}{\mathbb{A}}
\newcommand{\B}{\mathbb{B}}
\newcommand{\T}{\mathbb{T}}
\newcommand{\E}{\mathbb{E}}
\newcommand{\ml}{\mathcal{L}}
\newcommand{\Om}{\Omega}
\newcommand{\ve}{\varepsilon}
\newcommand{\rd}{\mathrm{d}}
\newcommand{\bear}{\begin{eqnarray}} 
\newcommand{\eear}{\end{eqnarray}} 
\newcommand{\bean}{\begin{eqnarray*}} 
\newcommand{\eean}{\end{eqnarray*}} 
\newcommand{\bs}{\begin{split}}
\newcommand{\es}{\end{split}}
\newcommand{\dhr}{\mathrel{\lhook\joinrel\relbar\kern-.8ex\joinrel\lhook\joinrel\rightarrow}}
\begin{document}


\title[Linearized Stability in Age-Structured Diffusive Populations]{The Principle of Linearized Stability in Age-Structured Diffusive Populations}

\author{Christoph Walker}
\email{walker@ifam.uni-hannover.de}
\address{Leibniz Universit\"at Hannover\\ Institut f\" ur Angewandte Mathematik \\ Welfengarten 1 \\ D--30167 Hannover\\ Germany}
\author{Josef Zehetbauer}
\email{josef.zehetbauer@gmx.at}
\date{\today}

\begin{abstract}
The principle of linearized stability is established for age-structured diffusive populations incorporating nonlinear death and birth processes. More precisely, asymptotic exponential stability is shown for equilibria for which the semigroup associated with the linearization at the equiblibrium has a negative growth bound. The result is derived in an abstract framework and applied in concrete situations.
\end{abstract}

\keywords{Age structure, diffusion, semigroups, stability of equilibria, linearization.}
\subjclass[2010]{47D06, 35B35, 35M10, 92D25}

\maketitle

\section{Introduction}

Let $u=u(t,a,x)\ge 0$ denote the density of an age-structured diffusive population at time $t\ge 0$, age $a\in [0,a_m)$ with maximal age $a_m\in (0,\infty]$, and spatial position $x\in\Om\subset \R^n$. A prototype model \cite{GurtinMacCamy_MB81,Langlais88,WebbSpringer} for the evolution of $u$ reads
\begin{subequations}\label{Eu1a}
\begin{align}
\partial_t u+\partial_a u&=\mathrm{div}_x\big(d(a,x)\nabla_xu\big)-m\big(\bar u(t,x),a,x\big)u\ , && t>0\, , &  a\in (0,a_m)\, ,& & x\in\Om\, ,\label{u1a}\\
u(t,0,x)&=\int_0^{a_m} b\big(\bar u(t,x),a,x\big)u(t,a,x)\,\rd a\, ,& & t>0\, , & & & x\in\Om\, ,\label{u2a}\\
\partial_N u(t,a,x)&=0\ ,& & t>0\, , &  a\in (0,a_m)\, ,& & x\in\partial\Om\, ,\label{u3a}\\
u(0,a,x)&=u_0(a,x)\ ,& & &  a\in (0,a_m)\, , & & x\in\Om\,,\label{u4a}
\end{align}
\end{subequations}
where the death and birth rates $m=m(\bar u ,a,x)\ge 0$  and $b=b(\bar u,a,x)\ge 0$, respectively, are smooth functions  possibly depending on the (weighted) local  overall population
$$
\bar u(t,x)=\int_0^{a_m}\nu(a,x) \, u(t,a,x)\, \rd a
$$
with weight $\nu$.
Spatial dispersal is governed by the diffusion term in \eqref{u1a} with speed $d(a,x)>0$. The initial distribution of the population is $u_0=u_0(a,x)\ge 0$, and $N$ denotes the outward unit normal on $\partial \Om$. 

Questions related to well-posedness and qualitative aspects of linear and nonlinear population models incorporating age and spatial structure  have been addressed by many authors under different assumptions and with different techniques. We mention \cite{DelgadoMolinasuarezJDE08,GurtinMacCamy_MB81,KangRuanJDE21,KangRuanJMB21,Langlais88,RhandiSchnaubelt_DCDS99,Rhandi,ThiemeDCDS,WalkerJEPE,WebbSpringer} (and the references therein) though this list is far from being complete. 

In this research we shall investigate stability properties of equilibrium solutions to problems of the form~\eqref{Eu1a} for which we embed the latter in a more abstract framework. To this end, we set
$$
A(a)w:=\mathrm{div}_x\big(d(a,\cdot)\nabla_xw\big)\, ,\quad w\in E_1\, ,
$$
where e.g. $E_1:=\Wqb^2(\Om)$  consists of all  functions $w$ in the Sobolev space  $W_q^{2}(\Om)$ with $q\in (1,\infty)$ satisfying the Neumann boundary condition $\partial_N w=0$ on $\partial\Om$. For a smooth and positive function~$d$ and fixed $a$, the operator $A(a)$ is then the generator of an analytic semigroup in $E_0:=L_q(\Omega)$ with domain $E_1$ (and if $A(a)$ depends smoothly also on $a$ it induces an evolution operator on $E_0$ with regularity subspace $E_1$).
We shall thus focus in the following on the abstract problem
\begin{subequations}\label{P} 
\begin{align}
\partial_t u+ \partial_au \, &=     A(a)u -m\big(\bar u(t),a\big)u \,, \qquad t>0\, ,\quad a\in (0,a_m)\, ,\label{P1}\\ 
u(t,0)&=\int_0^{a_m}b\big(\bar u(t),a\big)\, u(t,a)\, \rd a\,, \qquad t>0\, ,\label{P2} \\
u(0,a)&=  u_0(a)\,, \qquad a\in (0,a_m)\,,
\end{align}
\end{subequations}
for functions $u=u(t,a):\R^+\times [0,a_m)\rightarrow E_0^+$, where $a_m\in (0,\infty]$ and
$$
A(a): E_1\subset E_0\rightarrow E_0
$$ 
is for each $a\in [0,a_m)$ the generator of an analytic semigroup on some Banach lattice $E_0$ with domain $E_1$. Given such a function $u$ we indicate with a bar its (weighted) integral with respect to~$a$; that is,
$$
\bar u(t)=\int_0^{a_m} \nu( \sigma)\, u(t,\sigma)\, \rd \sigma \in E_0
$$
for a fixed function $\nu$ whenever this integral makes sense.
 We shall be more specific about the assumptions when presenting the main results in~\Cref{Sec2}. It is worth emphasizing though that our approach applies also to other differential operators and other boundary conditions than the ones appearing in~\eqref{Eu1a}. Also note that we will treat the case $a_m<\infty$ and $a_m=\infty$ simultaneously.
 
Let us emphasize that the ``elliptic'' operator $A(a)$ and the age derivative $\partial_a$ --~being supplemented with a nonlocal boundary condition~\eqref{P2}~-- act on different ``variables'' which makes the problem intricate.
It is then natural to consider problem~\eqref{P} as an evolution equation in the Banach space $\E_0:=L_1([0,a_m),E_0)$ (actually, on a subspace thereof to have more flexibility with respect to the nonlinearities).
The well-posedness of problems of the form~\eqref{P} in our setting was addressed e.g. in  \cite{WalkerDCDSA10,WebbSpringer} (see \Cref{T1} below). 

Equilibria --~i.e. time-independent solutions~-- of~\eqref{P}  are  determined from
\begin{subequations}\label{EP} 
	\begin{align}
		\partial_a\phi \, &=     A(a)\phi -m(\bar \phi,a)\phi \,, \quad a\in (0,a_m)\, ,\label{1e}\\ 
		\phi(0)&=\int_0^{a_m}b(\bar \phi ,a)\, \phi(a)\, \rd a\,.\label{2e} 
	\end{align}
\end{subequations} 
Clearly, $\phi\equiv 0$ is always an equilibrium. In  previous research we presented fairly general conditions sufficient for the existence of at least one positive smooth non-trivial equilibrium, e.g. by bifurcation methods~\cite{WalkerSIMA09,WalkerJDE10,WalkerCrelle} or using fixed point methods in conical shell ~\cite{WalkerJDE10,WalkerJEPE}. 
The main purpose of this research now is to establish the principle of linearized stability for an equilibrium $\phi$ of~\eqref{P} in the sense that the linearization of~\eqref{P} at $\phi$  controls the stability of $\phi$. Imposing that the nonlinearities are differentiable at $\phi$, the (formal) linearization of~\eqref{P} at $\phi$ is
\begin{subequations}\label{Plinear} 
	\begin{align}
	\partial_t v+ \partial_av \, &=     A(a)v -m\big(\bar \phi,a\big) v -\partial m\big(\bar \phi,a\big)[\bar v(t)]\phi(a)\,, \qquad t>0\, ,\quad a\in (0,a_m)\, ,\\ 
	v(t,0)&=\int_0^{a_m}b\big(\bar \phi,a\big)\, v(t,a)\, \rd a +\int_0^{a_m}\partial b\big(\bar \phi,a\big)[\bar v(t)]\, \phi(a)\, \rd a \,, \qquad t>0\, , \\
	v(0,a)&=  v_0(a)\,, \qquad a\in (0,a_m)\,,
\end{align} 
\end{subequations}
with $\partial$ indicating Fr\'echet derivatives with respect to $\bar\phi$.
Such linear problems were studied in \cite{WebbSpringer,WalkerIUMJ}. It was shown, in particular, that (under suitable assumptions) the corresponding solution is given by a strongly continuous semigroup $(\T_\phi(t))_{t\ge 0}$ on~$\E_0$ (i.e., $v(t)=\T_\phi(t) v_0$) inheriting the regularizing effect from the parabolic character of the operator~\mbox{$\partial_t-A$}. Under the premise  that this semigroup has an exponential decay we shall show herein that the equilibrium $\phi$ is asymptotically exponentially stable. If this condition is not met in the case of the trivial equilibrium  $\phi=0$, the associated semigroup has  asynchronous exponential growth  \cite{WalkerMOFM,WalkerIUMJ} (see also \cite{KangRuan_MA21} for a slightly different situation with nonlocal diffusion). 

We shall emphasize that our approach to investigate linearized stability is adapted from the case without diffusion \cite{PruessNA83} (see also \cite{WebbBook} for a nice exposition of this approach and \cite{PruessJMB81} for the case of a linear birth rate).  The idea of the proof presented herein follows closely the corresponding proof of \cite{PruessNA83,WebbBook}. There are, however, additional difficulties and technicalities that have to be dealt with when diffusion is taken into account.

\section{Main Result}\label{Sec2}

We now list our assumptions in detail, state the required well-posedness result, and then present the main result on the asymptotic exponential stability of equilibria.

\subsection*{Preliminaries}

Let $J:=[0,a_m]$ if $a_m<\infty$ and $J:=[0,\infty)$ if $a_m=\infty$. We write $\ml(E,F)$ for the normed vector space of bounded linear operators from a Banach space $E$ to a Banach space $F$ and set $\ml(E):=\ml(E,E)$.  In the following, $E_0$ is a real Banach lattice ordered by a closed convex cone~$E_0^+$. We let $E_1$ be a densely and compactly embedded subspace of $E_0$, a situation denoted in the following by 
\begin{equation*}
E_1\stackrel{d}{\dhr} E_0\,.
\end{equation*}
Fixing for $\theta\in (0,1)$ an admissible interpolation functor $(\cdot,\cdot)_\theta$ (see \cite{LQPP}),  we put $E_\theta:= (E_0,E_1)_\theta$ equipped with the order naturally induced by $E_0^+$.
We suppose that there is $\rho>0$ such that
\begin{subequations}\label{A}
\begin{equation}\label{A1a}
A \in  C^\rho\big(J,\mathcal{H}(E_1,E_0)\big)
\end{equation}
and
\begin{equation}\label{A1aa}
	A(a) \ \text{ is resolvent positive for each $a\in J$}\,,
\end{equation}
where $\mathcal{H}(E_1,E_0)$ denotes the subspace of $\ml(E_1,E_0)$ consisting of generators of analytic semigroups on $E_0$ with domain $E_1$.  
Then, due to \eqref{A1a}, $A$ generates a positive parabolic evolution operator 
$$
\{\Pi(a,\sigma)\in\ml(E_0)\,;\, a\in J\,,\, 0\le\sigma\le a\}
$$ 
on $E_0$ with regularity subspace $E_1$ in the sense of \cite[p.45]{LQPP}, see  \cite[II.Corollary~4.4.2]{LQPP} and \cite[II.Thereom~6.4.2]{LQPP}.
The evolution operator satisfies useful stability estimates on the interpolation spaces. We fix $\alpha\in [0,1)$ and assume that there are $M_\alpha\ge 1$ and $\varpi\in\R$ such that
\begin{equation}\label{EO}
	\|\Pi(a,\sigma)\|_{\ml(E_\alpha)}+(a-\sigma)^\alpha\,\|\Pi(a,\sigma)\|_{\ml(E_0,E_\alpha)}\le M_\alpha e^{\varpi (a-\sigma)}\,,\qquad a\in J\,,\quad 0\le \sigma\le a \,,
\end{equation}
(this is automatically satisfied if $a_m<\infty$, see~\cite[II.Lemma 5.1.3]{LQPP}) and
\begin{equation}\label{A4}
	\text{if $a_m=\infty$, then $\varpi<0$}\,.
\end{equation}
We further assume for the birth rate that
\begin{equation}\label{A1b}
\big[\bar v\to b(\bar v,\cdot)]\in C_b^{1-}\big( E_\alpha, L_{\infty}^+\big(J,\ml(E_\alpha,E_0)\big)\big)
\,,
\end{equation}
and for the death rate that
\begin{equation}\label{A1c}
	\big[\bar v\to m(\bar v,\cdot)]\in C_b^{1-}\big( E_\alpha, L_{\infty}^+\big(J,\ml(E_\alpha,E_0)\big)\big)\,,
\end{equation}
where $C_b^{1-}$ stands for locally Lipschitz continuous maps that are bounded on bounded sets. 
Finally, we fix a weight function $\nu$ such that there is $\vartheta\in (0,1)$ with\footnote{If $\alpha\in (0,1)$, it suffices to take $\vartheta=\alpha$.}
\begin{equation}\label{A1d}
	\nu\in L_{1}^+\big(J,\ml(E_\theta)\big)\cap L_{\infty}\big(J,\ml(E_\theta)\big)\,,\quad \theta\in\{0,\alpha,\vartheta\}\,.
\end{equation}
\end{subequations}
Setting $\E_\theta:=L_1(J,E_\theta)$ we shall use in the following the notation
$$
\bar v:=\int_0^{a_m} \nu(a)\,v(a)\,\rd a\in E_\theta\,,\quad v\in \E_\theta\,.
$$
Observe that the properties of the evolution operator $\Pi$ imply for $v_0\in E_0$ and \mbox{$f\in \E_0=L_1(J,E_0)$} that the function $v\in C(J,E_0)$, given by
\begin{equation}\label{VdK}
	v(a)=\Pi(a,0)v_0+\int_0^a\Pi(a,\sigma)\,f(\sigma)\,\rd a\,,\quad a\in J\,,
\end{equation} 
is the {\it mild solution} to the Cauchy problem
$$
\partial_a v=A(a)v+ f(a)\,,\quad a\in \dot{J}:=J\setminus\{0\}\,,\qquad v(0)= v_0\,,
$$ 
and
\begin{equation}\label{evol}
	\Pi(a,s)=\Pi(a,\sigma)\Pi(\sigma,s) \,,\quad 0\le s\le \sigma\le a \in J\,.
\end{equation} 
It worth noting that the assumptions we impose on $A$, $b$, and $m$ are natural and easily checked in concrete applications such as problem~\eqref{Eu1a} (see \Cref{Sec:Exmp} below).
We shall consider~\eqref{P} as an evolution equation in the phase space
$$
\E_\alpha=L_1(J,E_\alpha)
$$
i.e. we consider functions $u:\R^+\rightarrow \E_\alpha$. In the following, given a function $v:\R^+\rightarrow \E_0$  we use interchangeably the notation $v(t)(a)=v(t,a)$ for $t\in \R^+$ and $a\in J$ for convenience.
Let us finally note that~\eqref{A1c} and~\eqref{A1d} imply, in particular, that 
\begin{equation}\label{2}
	F:=\big[ v\mapsto m(\bar v,\cdot)v]\in C_b^{1-}\big(\E_\alpha, \E_0\big)
\end{equation}
and
\begin{equation}\label{3}
	\big[ v\mapsto b(\bar v,\cdot)v]\in C_b^{1-}\big(\E_\alpha, \E_0\big)\,.
\end{equation}

\subsection*{Well-Posedness}

Questions related to well-posedness of nonlinear problems of the form~\eqref{P} (and even more general equations) were addressed e.g. in~\cite{WalkerDCDSA10}.
Integrating \eqref{P} formally along characteristics  yields that a solution $u:\R^+\rightarrow\E_\alpha$ 
to \eqref{P} with initial value $u_0\in\E_\alpha$ satisfies the fixed point equation
\begin{subequations}\label{100}
  \begin{equation}\label{u}
     u(t,a)\, =\, \left\{ \begin{aligned}
    &\Pi(a,a-t)\, u_0(a-t) + G_{F(u)}(t,a)\, ,& &   a\in J\,,\ 0\le t\le a\, ,\\
    & \Pi(a,0)\, B_u(t-a)+ G_{F(u)}(t,a)\, ,& &  a\in J\, ,\ t>a\, ,
    \end{aligned}
   \right.
    \end{equation}
where 
\begin{equation}\label{400}
G_{v}(t,a):=\int_{(t-a)_+}^{t}\Pi(a,a-t+s)\, v(s,a-t+s)\,\rd s
 \end{equation}
for $v:\R^+\rightarrow \E_0$,
and where $B_u:=u(\cdot,0)$ satisfies the nonlinear Volterra equation 
    \begin{equation}\label{500}
    \begin{split}
    B_u(t)\, & =\, \int_0^t  b(\bar u(t),a)\, \Pi(a,0)\, B_u(t-a)\, \rd
    a\, +\, \int_t^{a_m} b(\bar u(t),a)\, \Pi(a,a-t)\, u_0(a-t)\, \rd a\\
    &\quad +\int_0^{a_m} b(\bar u(t),a) G_{F(u)}(t,a)\,\rd a
	\end{split}
    \end{equation}		
for $t\ge 0$. Here and in the following we put $b(\bar v,a):=0$ whenever $a\notin J$. Note that $u(t,0)=B_u(t)$ for $t\ge 0$ by ~\eqref{u}, while~\eqref{500} ensures 
\begin{equation}\label{500b}
	\begin{split}
		B_u(t)\, & =\, \int_0^{a_m}  b(\bar u(t),a)\, u(t,a)\, \rd
		a\,,\quad t\ge 0\,.
	\end{split}
\end{equation}
\end{subequations}
This is in accordance with the age boundary condition~\eqref{P2}. 

Motivated by these observations we mean in the following by a (local) solution to problem~\eqref{P} a function $u\in C(I,\E_\alpha)$ satisfying~\eqref{100} for $t\in I$, where $I$ is an interval in $\R^+$ containing $0$. We first state a well-posedness result which is mainly due to~\cite{WalkerDCDSA10}.

\begin{prop}\label{T1}
Let $\alpha\in [0,1)$ and suppose \eqref{A}. 
For every $u_0\in \E_\alpha$ there exists a unique maximal solution 
$u=u(\cdot;u_0)\in C\big(I(u_0),\E_\alpha\big)$
to problem~\eqref{P} on some maximal interval of existence $I(u_0)=[0,t^+(u_0))$; that is, $u(\cdot;u_0)$ satisfies~\eqref{100}. 
 If 
$$
\sup_{t\in I(u_0)\cap [0,T]} \| u(t;u_0)\|_{\E_\alpha} <\infty\,,\quad T>0\,,
$$
 then the solution exists globally, i.e., $I(u_0)=\R^+$.
Finally, if $u_0\in\E_\alpha^+$, then  $u(t;u_0)\in\E_\alpha^+$ for~$t\in I(u_0)$.
\end{prop}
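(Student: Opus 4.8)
The plan is to treat the fixed point system~\eqref{100} as a genuine Banach fixed point problem on a suitable space of continuous $\E_\alpha$-valued functions, exactly in the spirit of~\cite{WalkerDCDSA10}. Concretely, I would first fix $u_0\in\E_\alpha$ and, for $T>0$ and $R>\|u_0\|_{\E_\alpha}$, work on the complete metric space $\mathcal{X}_{T,R}:=\{u\in C([0,T],\E_\alpha)\,;\, u(0)=u_0\,,\ \|u(t)\|_{\E_\alpha}\le R\}$ with the sup-norm. The map $\Phi$ sending $u$ to the right-hand side of~\eqref{u} (after the birth term $B_u$ has been reconstructed from~\eqref{500}) should be shown to map $\mathcal{X}_{T,R}$ into itself and to be a contraction there for $T$ small. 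The two estimates that make this run are: the stability bound~\eqref{EO} on $\Pi$ in $\ml(E_\alpha)$ and $\ml(E_0,E_\alpha)$, which controls $G_{F(u)}(t,\cdot)$ and the drift term $\Pi(a,a-t)u_0(a-t)$ in $\E_\alpha=L_1(J,E_\alpha)$; and the local Lipschitz bounds~\eqref{2}, \eqref{3} on $F$ and on $v\mapsto b(\bar v,\cdot)v$, together with~\eqref{A1b}, \eqref{A1c}, \eqref{A1d}. Note the singular factor $(a-\sigma)^{-\alpha}$ coming from~\eqref{EO} is integrable in $\sigma$ since $\alpha<1$, which is what keeps the $\E_\alpha$-norms of the convolution terms finite.

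The one genuinely delicate point, and the step I expect to be the main obstacle, is the Volterra equation~\eqref{500} for the age-boundary trace $B_u$. Given $u\in\mathcal{X}_{T,R}$ one must first solve~\eqref{500} for $B_u\in C([0,T],E_0)$; this is a linear-in-$B_u$ Volterra equation with kernel $a\mapsto b(\bar u(t),a)\Pi(a,0)\in\ml(E_0)$, but the kernel is only in $L_\infty$ in $a$ near $a=0$ and does \emph{not} vanish at $a=0$, so the usual ``small-$T$ makes the Volterra operator a contraction'' argument needs the kernel's $L_1$-in-$a$ norm over $[0,t]$ to be small, which it is as $t\to0$ precisely because $\int_0^t\|b(\bar u(t),a)\Pi(a,0)\|_{\ml(E_0)}\,\rd a\le C t\to0$ using~\eqref{A1b} and~\eqref{EO}. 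One then has to check that $B_u$ depends Lipschitz-continuously on $u\in\mathcal{X}_{T,R}$ (uniformly for small $T$), using that $\bar u(t)\in E_\alpha$ depends Lipschitz on $u$ via~\eqref{A1d}, that the forcing terms in~\eqref{500} are Lipschitz in $u$ by~\eqref{2}–\eqref{3} and~\eqref{EO}, and a Gronwall-type estimate for the resolved Volterra equation. Substituting this $B_u$ back into~\eqref{u} then makes $\Phi$ well-defined and inherits the contraction property.

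With the local existence and uniqueness in hand, the maximal solution $u(\cdot;u_0)\in C(I(u_0),\E_\alpha)$ on $I(u_0)=[0,t^+(u_0))$ is obtained by the standard continuation argument: if $t^+(u_0)<\infty$ were a finite blow-up time with $\sup_{t<t^+(u_0)}\|u(t)\|_{\E_\alpha}<\infty$, then the uniform-in-initial-data local existence time (which depends only on the bound $R$, by the construction above) would allow continuation past $t^+(u_0)$, a contradiction; this is exactly the stated blow-up criterion. Uniqueness on all of $I(u_0)$ follows from the local uniqueness by a connectedness argument on $\{t\,;\, u_1(t)=u_2(t)\}$.

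Finally, for the positivity assertion I would use the positivity of the evolution operator $\Pi$ (guaranteed by~\eqref{A1aa}, via the cited results from~\cite{LQPP}), the positivity $b\ge0$, $m\ge0$, $\nu\ge0$ from~\eqref{A1b}, \eqref{A1c}, \eqref{A1d}, and a fixed-point argument carried out in the positive cone. The cleanest way is to observe that the cone $\E_\alpha^+=L_1(J,E_\alpha^+)$ is closed in $\E_\alpha$, that for $u$ with values in $\E_\alpha^+$ the reconstructed $B_u$ lies in $E_0^+$ (solve~\eqref{500} by Neumann series in $C([0,T],E_0^+)$, each term being positive since the kernel and forcing are positive), and hence $\Phi(u)$ has values in $\E_\alpha^+$; thus $\Phi$ maps the closed set $\mathcal{X}_{T,R}\cap C([0,T],\E_\alpha^+)$ into itself, and its unique fixed point there must coincide with $u(\cdot;u_0)$ when $u_0\in\E_\alpha^+$. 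Propagating this along the continuation gives $u(t;u_0)\in\E_\alpha^+$ for all $t\in I(u_0)$, as claimed. Since all of this is essentially contained in~\cite{WalkerDCDSA10}, the proof in the paper is presumably just a pointer to that reference together with a remark on how~\eqref{A} specializes its hypotheses.
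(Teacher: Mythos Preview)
Your proposal is correct and follows essentially the same approach as the paper, which indeed just sketches the key $\E_\alpha$-estimate on $G_{F(u)}$ (the one ingredient not directly covered by~\cite{WalkerDCDSA10}) and then defers the fixed-point argument and positivity to~\cite[Theorem~2.2, Proposition~2]{WalkerDCDSA10}. The only small misprediction is that the paper's accompanying remark is not about how~\eqref{A} specializes the hypotheses of that reference, but rather that the term $G_{F(u)}$ is the new feature, handled by verifying $[v\mapsto G_v]\in\ml\big(C([0,T],\E_0),C([0,T],\E_\alpha)\big)$ from~\eqref{EO} exactly as you describe.
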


The solution provided by \Cref{T1} features further properties, in particular with respect to regularity. We shall not go into details here and refer to~\cite{WalkerDCDSA10}. We will briefly address the proof of \Cref{T1} in \Cref{Sec:Ex}.

\subsection*{Stability of Equilibria}

Let $\phi\in\E_1 \cap C(J,E_0)$ be an equilibrium solution to~\eqref{P} (in the sense that it is a mild solution to~\eqref{EP}).
As mentioned before, existence of non-trivial positive smooth equilibria to problems of the form~\eqref{P} was established in previous works~\cite{WalkerSIMA09,WalkerJDE10,WalkerCrelle,WalkerJEPE} to which we refer.

The main purpose of this research is to establish the principle of linearized stability for such an equilibrium. That is, we want to derive information on the stability of $\phi$ from (spectral properties of) the linearized problem~\eqref{Plinear}.
To make things more precise, we now assume further that
\begin{subequations}\label{B}
\begin{equation}\label{B1}
	\begin{split}
	& E_\alpha\rightarrow L_{\infty}\big(J,\ml(E_\alpha,E_0)\big)\,,\ v\mapsto m(v,\cdot) \text{ is differentiable at $\bar\phi\in E_1$}
	\end{split}
\end{equation}
and
\begin{equation}\label{B2}
	\begin{split}
		& E_\alpha\rightarrow L_{\infty}\big(J,\ml(E_\alpha,E_0)\big)\,,\ v\mapsto b(v,\cdot) \text{ is differentiable at $\bar\phi\in E_1$}\,.
	\end{split}
\end{equation}
Moreover, for technical reasons we also assume that (for some $\vartheta\in (0,1)$, see~\eqref{A1d})
\begin{equation}\label{B3}
	\begin{split}
		b(\bar \phi,\cdot)\in   L_{1}\big(J,\ml(E_\theta)\big)\cap L_{\infty}\big(J,\ml(E_\theta)\big)\,,\quad \theta\in\{0,\alpha,\vartheta\}\,,
	\end{split}
\end{equation}
and 
\begin{equation}\label{B2b}
\big[v\mapsto \partial b(\bar \phi,\cdot)[ v]\phi\big]\in \ml\big(E_\theta, \E_\theta\big)\,,\quad \theta\in\{0,\alpha,\vartheta\}\,.
\end{equation}
\end{subequations} 
Setting then
$$
(\partial F(\phi) v)(a):= -m\big(\bar \phi,a\big) v(a) -\partial m\big(\bar \phi,a\big)[\bar v]\phi(a)\, ,\qquad a\in (0,a_m)\,,\quad v\in \E_\alpha\,, 
$$
and
\begin{equation}\label{17C}
\mathcal{M}_\phi(v):=\int_0^{a_m}  b(\bar \phi,a)\,v(a)\,\rd a +\int_0^{a_m}  \partial b(\bar \phi,a)[\bar v]\phi(a)\,\rd a\,,\quad v\in\E_\alpha\,,
\end{equation}
we have
\begin{equation}\label{17Cx}
\partial F(\phi)\in\ml(\E_\alpha,\E_0)\,,\qquad \mathcal{M}\in \ml(\E_\theta,E_\theta)\,,\quad \theta\in\{0,\alpha,\vartheta\}\,.
\end{equation}
It then follows from \cite[Theorem~2.8]{WalkerIUMJ}  that, for  $v_0\in\E_0$, the linearized Cauchy problem (see~\eqref{Plinear})
	\begin{align*}
	\partial_t v+ \partial_av \, &=     A(a)v +\partial F(\phi) v\,, \qquad t>0\, ,\quad a\in (0,a_m)\, ,\\ 
	v(t,0)&=\mathcal{M}_\phi(v(t)) \,, \qquad t>0\, , \\
	v(0,a)&=  v_0(a)\,, \qquad a\in (0,a_m)\,.
	\end{align*} 
defines a strongly continuous semigroup $(\T_\phi(t))_{t\ge 0}$ on $\E_0$; that is, $v(t)=\T_\phi(t)v_0$ is its unique (mild) solution in $\E_0$. Moreover, the semigroup  inherits the regularizing properties from the parabolic part in the sense that there are $N_\alpha(\phi)\ge 1$ and $\omega_\alpha(\phi)\in\R$ with
\begin{equation}\label{C}
\|\T_\phi(t)\|_{\ml(\E_\alpha)} +t^{\alpha}\|\T_\phi(t)\|_{\ml(\E_0,\E_\alpha)}\le N_\alpha(\phi)\, e^{-\omega_\alpha(\phi) t} \,,\quad t\ge 0\,.
\end{equation}
We shall give more details on all these facts later in the subsequent sections (in particular, see Proposition~\ref{LemmaH} below). The main result of this work regarding the stability of equilibria now states that the equilibrium $\phi$ is asymptotically exponentially stable in~$\E_\alpha$ provided that $\omega_\alpha(\phi)>0$:

\begin{thm}\label{T2}
	Let $\alpha\in [0,1)$ and suppose \eqref{A}. Let $\phi\in\E_1\cap C(J,E_0)$ be an equilibrium solution to~\eqref{P} such that \eqref{B} is satisfied. Moreover, suppose that
	$\omega_\alpha(\phi)>0$
	in \eqref{C}. Then,  given any $\omega\in (0,\omega_\alpha(\phi))$, there are $r>0$ and $M\ge 1$ such that, for every  $u_0\in \mathbb{B}_{\E_\alpha}(\phi,r)$, the solution $u(\cdot;u_0)$ to~\eqref{P} exists globally and
	$$
	\|u(t;u_0)-\phi\|_{\E_\alpha}\le M\, e^{-\omega t}\,\|u_0-\phi\|_{\E_\alpha} \,,\quad t\ge 0\,.
	$$
	In particular, the equilibrium $\phi$ is asymptotically exponentially stable in~$\E_\alpha$.
\end{thm}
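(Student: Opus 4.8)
The plan is to recast problem~\eqref{P} near the equilibrium $\phi$ as a perturbed version of the linear problem whose solution semigroup is $(\T_\phi(t))_{t\ge 0}$, and then run a Gronwall/fixed-point argument in the spirit of \cite{PruessNA83,WebbBook}. First I would set $w:=u-\phi$ and subtract the equilibrium equations~\eqref{EP} from~\eqref{P}; since $\phi$ is time-independent this turns~\eqref{P} into an evolution equation of the form $\partial_t w + \partial_a w = A(a)w + \partial F(\phi)w + R(w)$ with age-boundary condition $w(t,0)=\mathcal{M}_\phi(w(t)) + S(w(t))$, where $R$ collects the quadratic remainder of $v\mapsto m(\bar v,\cdot)v$ at $\phi$ and $S$ the quadratic remainder of the birth term; by the differentiability assumptions~\eqref{B1}--\eqref{B2} together with~\eqref{2}--\eqref{3} these satisfy $\|R(w)\|_{\E_0}=o(\|w\|_{\E_\alpha})$ and $\|S(w)\|_{E_0}=o(\|w\|_{\E_\alpha})$, locally uniformly, and in fact (being differences of $C^{1-}$ maps and their linearizations) a Lipschitz-type estimate $\|R(w_1)-R(w_2)\|_{\E_0}\le L(\rho)\,(\|w_1\|_{\E_\alpha}+\|w_2\|_{\E_\alpha})\,\|w_1-w_2\|_{\E_\alpha}$ on the ball of radius $\rho$, and similarly for $S$.

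Next I would write the variation-of-constants (mild) formulation of the $w$-equation against the linear semigroup $(\T_\phi(t))_{t\ge 0}$. The subtle point is that the inhomogeneities $R$ and $S$ enter in the ``$\E_0$/$E_0$'' scale while the fixed point must be set up in $C([0,T],\E_\alpha)$; this is exactly where the smoothing estimate~\eqref{C} is used, precisely as~\eqref{VdK} and~\eqref{100} package the analogous smoothing for the nonlinear problem. Concretely, the solution should satisfy a Duhamel-type identity
$$
w(t)=\T_\phi(t)w_0+\int_0^t \T_\phi(t-s)\,\big[\text{(nonlinear remainder terms built from }R(w(s)),\,S(w(s)))\big]\,\rd s\,,
$$
where the remainder terms are mapped into $\E_0$ and into the boundary fibre $E_0$ and then propagated; the factor $t^{-\alpha}$ in~\eqref{C} is integrable, so the convolution maps $C([0,T],\E_\alpha)$ into itself. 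I would then fix $\omega\in(0,\omega_\alpha(\phi))$, choose $r$ small enough that on $\mathbb{B}_{\E_\alpha}(\phi,r)$ the Lipschitz constants $L(r)$ are so small that the map $w\mapsto$ (right-hand side above) is a contraction on the weighted space $\{w\in C(\R^+,\E_\alpha):\sup_{t\ge0} e^{\omega t}\|w(t)\|_{\E_\alpha}<\infty\}$ with norm $\tfrac12\|w_0-\phi\|_{\E_\alpha}$ say; the estimate $e^{\omega t}\int_0^t N_\alpha(\phi) e^{-\omega_\alpha(\phi)(t-s)}(t-s)^{-\alpha}\cdot C L(r) e^{-\omega s}\,\rd s \le C' L(r)$ with $C'$ independent of $t$ is the crux of the closedness of this weighted ball under the map. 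The contraction's unique fixed point is then the global solution (global existence also follows a posteriori from the \emph{a priori} bound together with \Cref{T1}), and it obeys $\|w(t)\|_{\E_\alpha}\le M e^{-\omega t}\|w_0\|_{\E_\alpha}$, which is the claim.

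The main obstacle I anticipate is twofold and both difficulties are the ``additional technicalities from diffusion'' alluded to in the introduction. First, writing down the correct mild formulation: the nonlocal age-boundary condition means the Volterra equation~\eqref{500} for $B_u$ has to be linearized carefully, the remainder $S$ involves $\bar u(t)$ which lives only in $E_\alpha$ (not $E_1$), so one must check that $\partial b(\bar\phi,\cdot)[\,\cdot\,]\phi$ and the remainder map into the $\E_\theta$-scales as guaranteed by~\eqref{B2b}, \eqref{B3}, \eqref{17Cx}, and that the composition with $\Pi(a,0)$ in the $t>a$ branch of~\eqref{u} still produces an $\E_\alpha$-valued object after convolution. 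Second, and relatedly, one needs the precise statement (referenced as Proposition~\ref{LemmaH} and as \cite[Theorem~2.8]{WalkerIUMJ}) that the \emph{difference} between the nonlinear mild solution operator and the linear semigroup $\T_\phi$ is controlled by the remainders $R,S$ in the $\E_0$-norm with the $t^{-\alpha}$-smoothing intact — i.e. a variation-of-constants formula \emph{relative to} $\T_\phi$ rather than relative to the parabolic evolution operator $\Pi$. Establishing that identity rigorously (rather than just formally) is the real work; once it is in hand, the weighted-norm contraction argument is routine.
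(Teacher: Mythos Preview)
Your overall strategy---linearize at $\phi$, isolate quadratic remainders $R$ (interior) and $S$ (boundary), and close a Gronwall/contraction argument in an exponentially weighted $\E_\alpha$-norm using the smoothing~\eqref{C}---is exactly the paper's strategy. You also correctly flag the real difficulty: obtaining a rigorous variation-of-constants formula \emph{relative to} $\T_\phi$ when the boundary condition carries an inhomogeneity $S(w(t))$.

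The gap is that your displayed Duhamel formula
\[
w(t)=\T_\phi(t)w_0+\int_0^t \T_\phi(t-s)\,[\text{terms built from }R(w(s)),\,S(w(s))]\,\rd s
\]
does not hold as written, and the paper's resolution is a device you have not anticipated. The semigroup $\T_\phi$ is defined for the \emph{homogeneous} linear boundary condition $v(t,0)=\mathcal{M}_\phi(v(t))$; the boundary remainder $h_w(t):=\int_0^{a_m}R_b(w(t))(a)\,\rd a$ cannot simply be placed under $\T_\phi(t-s)$ as an $\E_0$-forcing. What the paper does instead is introduce a free parameter $\gamma\in\R$ and an auxiliary function $W_{0,0}^{\gamma,h}$, the mild solution of the linear problem with zero initial data, zero interior forcing, but boundary inhomogeneity~$h$ and shifted generator $-\gamma+A(a)$. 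One derives (Lemma~5.3) the estimate $\|W_{0,0}^{\gamma,h}(t,\cdot)\|_{\E_\alpha}\le c_0\int_0^t e^{(\varpi+\mu-\gamma)(t-a)}(t-a)^{-\alpha}\|h(a)\|_{E_0}\,\rd a$ with $\mu,c_0$ \emph{independent of} $\gamma$, and then (Proposition~6.1) the genuine representation
\[
w(t)=\T_\phi(t)w_0+\int_0^t \T_\phi(t-s)\Big((\gamma+\partial F(\phi))W_{0,0}^{\gamma,h_w}(s,\cdot)+R_F(w(s))\Big)\,\rd s + W_{0,0}^{\gamma,h_w}(t,\cdot).
\]
Note the extra term $W_{0,0}^{\gamma,h_w}$ \emph{outside} the integral and the fact that inside the integral the boundary remainder appears only through $W_{0,0}^{\gamma,h_w}$, not directly. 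The freedom in $\gamma$ is then spent (equation~(6.4)) by choosing $\gamma>\omega_\alpha+\varpi+\mu$ so that the $W_{0,0}^{\gamma,h_w}$ estimate has a more favourable exponential than $e^{-\omega_\alpha t}$, after which the Gronwall closes. Establishing this representation goes through the unperturbed semigroup $\mS$ (Proposition~5.2, Corollary~5.5) and the key identity $W_{z,f}^{0,h}=W_{z,\gamma W+f}^{\gamma,h}$ (Lemma~5.6), which is what allows one to pass from~$\mS$ to~$\T_\phi$ via Proposition~5.4. Without this $\gamma$-trick and the separate treatment of the boundary piece, your contraction argument has no object to act on.
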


Note that the assumption $\omega_\alpha(\phi)>0$ is equivalent to the assumption that the growth bound of the semigroup $(\T_\phi(t))_{t\ge 0}$ on $\E_\alpha$ is negative. In some cases (e.g. for the trivial equilibrium $\phi=0$ or if $m=m(a)$ is independent of the population) one can show that the semigroup $(\T_\phi(t))_{t\ge 0}$ on $\E_\alpha$ is eventually compact, hence its growth bound coincides with the spectral bound of its generator (see \Cref{R14} and~\Cref{Sec:Exmp}).
 
It is also worth pointing out that~\eqref{B3} and~\eqref{B2b} are not too restrictive with regard to applications since $\phi\in\E_1$. Finally, we emphasize that one can choose $\alpha\in (0,1)$ positive so that the nonlinearities $m$ and $b$ are defined on a smaller space $\E_\alpha$ than $\E_0$. This is due to the fact that we exploit the regularizing effects  induced from the analytic generator~$A$ in~\eqref{P}. \\

The outline of the remainder is as follows: In Section~\ref{Sec:Ex} we briefly sketch how to prove~ \Cref{T1}. We then prepare in Section~\ref{Sec:DeriveLin} the proof of \Cref{T2}. Fixing an equilibrium $\phi$ we derive a formula for the difference $w:=u(\cdot;u^0)-\phi$ based on the linearization~\eqref{Plinear} of problem~\eqref{P}. The main statement in this context is \Cref{intermediate}. In Section~\ref{Sec:Linear} we set the stage for estimating the $\E_\alpha$-norm of $w=u(\cdot;u^0)-\phi$ by focusing on the linearized problem. In particular, we provide properties and a priori estimates of the underlying linear semigroup $(\T_\phi(t))_{t\ge 0}$ associated with~\eqref{Plinear}. The key in this regard is \Cref{LemmaH} which allows us to give in the subsequent Section~\ref{Sec:6} an alternative representation of the difference $w=u(\cdot;u^0)-\phi$ in terms of the semigroup $(\T_\phi(t))_{t\ge 0}$. The previously established a priori estimates then imply the asymptotic stability of the equilibrium~$\phi$ provided the semigroup $(\T_\phi(t))_{t\ge 0}$ has an exponential decay. This yields \Cref{T2}.

Finally, in \Cref{Sec:Exmp} we revisit the concrete problem~\eqref{Eu1a} and present examples to which our results apply.


\section{Well-Posedness: Proof of \Cref{T1}}\label{Sec:Ex}


\Cref{T1} is a special case of the results shown in~\cite{WalkerDCDSA10} except for the term $G_{F(u)}$ defined in  \eqref{2} and \eqref{400}. However, noticing from~\eqref{EO} that, for $v\in C\big([0,T],\E_0\big)$ and $t\in [0,T]$,
	\begin{equation*}
		\begin{split}
			\|G_{v}(t,\cdot)\|_{\E_\alpha}&\le \int_0^{a_m}\int_{(t-a)_+}^{t}\|\Pi(a,a-t+s)\|_{\ml(E_0,E_\alpha)}\,\| v(s,a-t+s)\|_{E_0}\,\rd s \,\rd a\\
			& \le c(T)\int_0^{a_m}\int_{(t-a)_+}^{t} (t-s)^{-\alpha}\,\| v(s,a-t+s)\|_{E_0}\,\rd s \,\rd a\\
			& \le c(T)\int_0^{t}  (t-s)^{-\alpha}\,\| v(s)\|_{\E_0}\,\rd s \,,
		\end{split}
	\end{equation*}
we infer from the continuity properties of the evolution operator~$\Pi$ (see \cite[p.45]{LQPP}) that
\begin{equation}\label{o9}
[v\mapsto G_{v}]\in \ml\big(C\big([0,T],\E_0\big),C\big([0,T],\E_\alpha\big)\big)\,, 
\end{equation}
hence \eqref{2} implies
$$
	[v\mapsto G_{F(v)}]\in C_b^{1-}\big(C\big([0,T],\E_\alpha\big),C\big([0,T],\E_\alpha\big)\big)\,. 
$$
The well-posedness stated in \Cref{T1} then follows from this and \eqref{3} exactly along the lines of~\cite[Theorem~2.2]{WalkerDCDSA10} by means of  Banach's fixed point theorem. The positivity is shown as in~\cite[Propositon~2]{WalkerDCDSA10}.



\section{Derivation of the Linearization}\label{Sec:DeriveLin}
	

We prepare the proof of	\Cref{T2} by deriving the linearization of problem~\eqref{P} at an equiblibrium. \\

For the remainder of this paper, suppose \eqref{A} and let $\phi\in\E_1 \cap C(J,E_0)$ be a fixed equilibrium solution to~\eqref{P} --~i.e. $\phi$ is a mild solution to~\eqref{EP}~-- such that \eqref{B} is satisfied. 
We first note the following representation of~$\phi$.

\begin{lem}\label{phi}
	The equilibrium $\phi\in\E_1 \cap C(J,E_0)$ satisfies the identity
	\begin{equation}\label{13A}
		\phi(a)\, =\, \left\{ \begin{aligned}
			&\Pi(a,a-t)\, \phi(a-t) + G_{F(\phi)}(t,a)\, ,& &   a\in J\,,\  t\le a\, ,\\
			& \Pi(a,0)\,\phi(0)+ G_{F(\phi)}(t,a)\, ,& &  a\in J\, ,\ t>a\, ,
		\end{aligned}
		\right.
	\end{equation}
 for every $t\ge 0$, where $F$ and $G_{F(\phi)}$ are defined in~\eqref{2} and~\eqref{400}, respectively.
\end{lem}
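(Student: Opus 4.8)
The plan is to recognise~\eqref{13A} as the variation-of-constants formula for the parabolic evolution operator $\Pi$ associated with $A$, applied to the equilibrium $\phi$ and merely rewritten by a change of the integration variable so that it matches the shape of the fixed point identity~\eqref{u}. Since $\phi$ is by assumption a mild solution to~\eqref{EP}, it satisfies, by~\eqref{VdK},
\[
\phi(a)=\Pi(a,0)\,\phi(0)+\int_0^a\Pi(a,\sigma)\,F(\phi)(\sigma)\,\rd\sigma\,,\qquad a\in J\,,
\]
where $F(\phi)\in\E_0$ by~\eqref{2}, so that all the integrals appearing below are well defined.

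First I would treat the case $t>a$. Here $(t-a)_+=t-a$, and substituting $\sigma=a-t+s$ in the definition~\eqref{400} of $G$ — using that $F(\phi)$ does not depend on the time variable — turns $G_{F(\phi)}(t,a)$ into $\int_0^a\Pi(a,\sigma)\,F(\phi)(\sigma)\,\rd\sigma$. Inserting this into the displayed identity at once gives $\phi(a)=\Pi(a,0)\,\phi(0)+G_{F(\phi)}(t,a)$, which is the second line of~\eqref{13A}.

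Next, for $t\le a$, the same substitution yields $G_{F(\phi)}(t,a)=\int_{a-t}^a\Pi(a,\sigma)\,F(\phi)(\sigma)\,\rd\sigma$. The idea is then to restart the evolution at age $a-t$: writing~\eqref{VdK} at age $a-t$, composing with $\Pi(a,a-t)$, and invoking the evolution property~\eqref{evol} (so that $\Pi(a,a-t)\Pi(a-t,\sigma)=\Pi(a,\sigma)$ for $0\le\sigma\le a-t$ and $\Pi(a,a-t)\Pi(a-t,0)=\Pi(a,0)$) gives
\[
\Pi(a,a-t)\,\phi(a-t)=\Pi(a,0)\,\phi(0)+\int_0^{a-t}\Pi(a,\sigma)\,F(\phi)(\sigma)\,\rd\sigma\,.
\]
Adding $G_{F(\phi)}(t,a)$ and merging the integrals over $[0,a-t]$ and $[a-t,a]$ then reproduces $\phi(a)$, i.e. the first line of~\eqref{13A}.

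I expect the only delicate point to be the bookkeeping in the change of variables converting the ``time'' integral in~\eqref{400} into the ``age'' integral of $\Pi(a,\cdot)F(\phi)(\cdot)$, together with keeping the integration endpoints straight in the two regimes $t\le a$ and $t>a$; once this is done, the identity follows immediately from~\eqref{evol} and the definition of a mild solution, with no analytic subtlety. As an alternative route, one could instead verify that the time-independent function $u(t,a):=\phi(a)$ satisfies the fixed point system~\eqref{100} — which for such a function collapses precisely to~\eqref{13A} together with the boundary condition~\eqref{2e} — and then appeal to the uniqueness assertion of \Cref{T1}; the direct computation above is, however, shorter and more transparent.
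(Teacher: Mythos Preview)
Your proof is correct and follows essentially the same route as the paper: both start from the mild solution formula~\eqref{VdK}, perform the substitution $\sigma=a-t+s$ to identify $G_{F(\phi)}(t,a)$ with the appropriate portion of $\int_0^a\Pi(a,\sigma)F(\phi)(\sigma)\,\rd\sigma$, and use the evolution property~\eqref{evol} to restart at $a-t$ in the case $t\le a$. The only cosmetic difference is the direction of the computation (you compute $G_{F(\phi)}$ first and add, the paper splits $\phi(a)$ first and recognises $G_{F(\phi)}$), which is immaterial.
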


\begin{proof}
		It  readily follows from \eqref{VdK} and \eqref{1e} that
	\begin{equation}\label{10}
		\phi(a)=\Pi(a,0)\phi(0)+\int_0^a\Pi(a,\sigma)F(\phi)(\sigma)\,\rd\sigma\,,\quad a\in J\,.
	\end{equation}
	Let $t\ge 0$ be arbitrarily fixed and $a\in J$. If $ a \in (0,t)$, then, by formula \eqref{10},
	\begin{align*}
		\phi (a)	&=  \Pi (a,0) \phi(0) + \int_{t-a}^t \Pi(a, a-t+s) F(\phi) (a-t+s) \, \rd s \\
		&=\Pi (a,0) \phi(0)+G_{F(\phi)}(t,a)\,.
	\end{align*}
	If $t< a_m$ and  $a \in (t, a_m)$, then, by formula \eqref{10} and the evolution property~\eqref{evol},
	\begin{align*}
		\phi (a)	&=  \Pi (a,0) \phi(0) + \left(\int_0^{a-t}+\int_{a-t}^a\right) \Pi(a, \sigma) F(\phi) (\sigma) \, \rd \sigma \\
		&= \Pi (a,a-t)\left(\Pi (a-t,0) \phi(0) + \int_0^{a-t} \Pi (a-t,\sigma) F(\phi) (\sigma) \, \rd\sigma\right) \\
		&\quad + \int_{0}^t \Pi (a,a-t+s) F(\phi) (a-t+s) \, \rd s\\
		&=\Pi (a,a-t) \phi(a-t)+G_{F(\phi)}(t,a)\,.
	\end{align*}
	This is the assertion.
\end{proof}

Let now $u_0\in\E_\alpha $ be fixed and set
$$
w:=u(\cdot;u_0)-\phi \,,\qquad w_0:=u_0-\phi\,,
$$
where $u(\cdot;u_0)\in C\big(I(u_0),\E_\alpha\big)$ is the maximal solution to~\eqref{P} provided by \Cref{T1}. 
Then $w\in C\big(I(u_0),\E_\alpha\big)$, and it follows from \Cref{phi} and \eqref{u} that 
  \begin{equation}\label{14}
	w(t,a)\, =\, \left\{ \begin{aligned}
		&\Pi(a,a-t)\, w_0(a-t) + G_{F(u)-F(\phi)}(t,a)\, ,& &   (t,a)\in I(u_0)\times J\,,\ t\le a\, ,\\
		& \Pi(a,0)\, \big(B_u(t-a)-\phi(0)\big)+G_{F(u)-F(\phi)}(t,a)\, ,& &  (t,a)\in I(u_0)\times J\, ,\ t>a\,.
	\end{aligned}
	\right.
\end{equation}
We next use the linearizations for $F$ and $B_u$. To this end, we note from~\eqref{A1d} that
$$
\|\bar v-\bar\phi\|_{E_\alpha} \le \|\nu\|_{L_\infty(J,\ml(E_\alpha))} \,\| v-\phi\|_{\E_\alpha} 
$$
for $v\in\E_\alpha$ so that, using~\eqref{B2}, we can write
	\begin{subequations}\label{14E}
		\begin{equation}\label{14Ea}
			b(\bar v,\cdot)v-b(\bar \phi,\cdot)\phi=b(\bar\phi,\cdot)( v- \phi) +\partial b(\bar\phi,\cdot)[\bar v-\bar \phi]\phi+ R_b( v- \phi)
		\end{equation}
		with reminder term
		\begin{equation}\label{14E2}
			\|R_b( v-\phi )\|_{\E_0}=o\big(\| v- \phi\|_{\E_\alpha}\big)\ \text { as }\ \| v- \phi\|_{\E_\alpha}\to 0\,.
		\end{equation}
Also note that~\eqref{B2} entails
\begin{equation}\label{14C}
	\|\partial b(\bar\phi,a)[\bar v]\phi(a)\|_{E_0}\le c_b\,\|\bar v\|_{E_\alpha}\,\|\phi(a)\|_{E_0}\,,\quad a\in J\,,\quad \bar v\in E_\alpha\,,
\end{equation}
\end{subequations}
with $c_b:=\|\partial b(\bar\phi,\cdot)\|_{\ml(E_\alpha,L_{\infty}(J,\ml(E_\alpha,E_0)))}$. 
	Similarly, due to~\eqref{B1}, $F:\E_\alpha\to\E_0$ is differentiable at~$\phi$ and
	\begin{subequations}\label{14G}
		\begin{equation}\label{14Ga}
			F(v)=F(\phi)+\partial F(\phi)(v-\phi)+R_F(v-\phi)
		\end{equation}
		with
		\begin{equation}\label{14G2}
			\partial F(\phi)\in\ml(\E_\alpha,\E_0)\,,\qquad 	\|R_F(v-\phi)\|_{\E_0}=o\big(\| v- \phi\|_{\E_\alpha}\big)\ \text { as }\ \|v-\phi\|_{\E_\alpha}\to 0\,.
		\end{equation}
	\end{subequations}
In particular, 
\begin{equation}\label{15}
	G_{F(u)-F(\phi)}=G_{\partial F(\phi)w+R_F(w)}\,.
\end{equation}	
Recalling~\eqref{500b} we set (slightly abusing notation)
\begin{align*}
		B_w(t):& =B_u(t)-\phi(0)\,  =\, \int_0^{a_m}  \big[b(\bar u(t),a)\, u(t,a)-b(\bar \phi,a)\phi(a)\big]\, \rd
		a \nonumber\\
		&= \int_0^{a_m}  b(\bar \phi,a)\, w(t,a)\, \rd a + \int_0^{a_m}  \partial b(\bar \phi,a)[\bar w(t)]\, \phi(a)\,\rd a+\int_0^{a_m}  R_b(w(t)) (a)\, \rd a \\
&= \int_0^{a_m}  b(\bar \phi,a)\, w(t,a)\, \rd a + \int_0^{a_m}   \int_0^{a_m}  \partial b(\bar \phi,\sigma)[\nu(a) w(t,a)]\, \phi(\sigma)\,\rd \sigma \,\rd a\\
&\quad +\int_0^{a_m}  R_b(w(t))(a)\, \rd a
	\end{align*}
for $t\in I(u_0)$, where we used~\eqref{14Ea} and the linearity of $\partial b(\bar \phi,\sigma)[\cdot]$ for the third respectively fourth equality.  Introducing (see \eqref{A1d}, \eqref{B3}, \eqref{B2b})
\begin{subequations}\label{bb}
	\begin{equation}
	\mathfrak{b}_\phi \in   L_{1}\big(J,\ml(E_\theta)\big)\cap L_{\infty}\big(J,\ml(E_\theta)\big)\,,\quad \theta\in\{0,\alpha,\vartheta\}\,,
\end{equation}
by
	\begin{equation}
	\mathfrak{b}_\phi (a)v:= b(\bar \phi,a) v+\int_0^{a_m}  \partial b(\bar \phi,\sigma)[\nu(a) v]\, \phi(\sigma)\,\rd \sigma\,,\quad a\in J\,,\quad v\in E_0\,,
\end{equation}
\end{subequations}
we obtain
\begin{align}
	B_w(t)& = \int_0^{a_m}  \mathfrak{b}_\phi (a)\, w(t,a)\, \rd a+\int_0^{a_m}  R_b(w(t))(a) \, \rd a\,,\quad t\in I(u_0)\,. \label{16}
\end{align}
Consequently, we infer from~\eqref{14}, \eqref{15}, and \eqref{16} the following intermediate result:

\begin{prop}\label{intermediate}
Let $u_0\in \E_\alpha$ and let $u(\cdot;u_0)\in C\big(I(u_0),\E_\alpha\big)$ be the maximal solution to~\eqref{P}. If
$w=u(\cdot;u_0)-\phi$ and $w_0=u_0-\phi$, 
then $w\in C(I(u_0),\E_\alpha)$
satisfies
 	\begin{subequations}\label{17}
  \begin{equation}\label{17A}
	w(t,a)\, =\, \left\{ \begin{aligned}
		&\Pi(a,a-t)\, w_0(a-t) + G_{\partial F(\phi)w+R_F(w)}(t,a)\, ,& &   (t,a)\in I(u_0)\times J\,,\ t\le a\, ,\\
		& \Pi(a,0)\, B_w(t-a)+ G_{\partial F(\phi)w+R_F(w)}(t,a)\, ,& &  (t,a)\in I(u_0)\times J\, ,\ t>a\, ,
	\end{aligned}
	\right.
\end{equation}
where $B_w$ satisfies \eqref{16}, i.e.
\begin{equation}\label{17B}
	\begin{split}
		B_w(t)& =\mathcal{M}_\phi\big(w(t)\big)+h_w(t)\,,\quad t\in I(u_0)\,,
	\end{split}
\end{equation}
with $\mathcal{M}_\phi$ being defined in \eqref{17C} and
\begin{equation}\label{17Bb}
h_w(t):=\int_0^{a_m}  R_b( w(t))(a) \, \rd a\,, \quad t\in I(u_0)\,.
\end{equation}
\end{subequations}
\end{prop}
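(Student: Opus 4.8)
\textbf{Proof plan for Proposition~\ref{intermediate}.}
The statement is essentially a bookkeeping result: it reformulates the fixed point identity~\eqref{u} satisfied by $u=u(\cdot;u_0)$ by subtracting the corresponding identity~\eqref{13A} for the equilibrium $\phi$ and then replacing the nonlinear terms by their first-order Taylor expansions. Accordingly, I would organize the proof as three short steps and claim that each is already available in the text above.

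\emph{Step 1: Subtract the equilibrium identity.} Starting from~\eqref{u} for $u$ and from~\eqref{13A} (Lemma~\ref{phi}) for $\phi$ --- both written with the \emph{same} splitting at the characteristic $t=a$ --- I would subtract them. In the region $t\le a$ the terms $\Pi(a,a-t)u_0(a-t)$ and $\Pi(a,a-t)\phi(a-t)$ combine to $\Pi(a,a-t)w_0(a-t)$; in the region $t>a$ the terms $\Pi(a,0)B_u(t-a)$ and $\Pi(a,0)\phi(0)$ combine to $\Pi(a,0)(B_u(t-a)-\phi(0))=\Pi(a,0)B_w(t-a)$ by the definition of $B_w$. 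Since $G$ is linear in its subscript argument, $G_{F(u)}-G_{F(\phi)}=G_{F(u)-F(\phi)}$, which is exactly display~\eqref{14}. This is stated already; I would merely cite it.

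\emph{Step 2: Linearize $F$ in the forcing term.} By assumption~\eqref{B1}, $F=[v\mapsto m(\bar v,\cdot)v]$ is differentiable at $\phi$ from $\E_\alpha$ to $\E_0$, so $F(u)-F(\phi)=\partial F(\phi)w+R_F(w)$ with $\|R_F(w)\|_{\E_0}=o(\|w\|_{\E_\alpha})$; see~\eqref{14G}. Substituting this into the $G$-term and invoking linearity of $v\mapsto G_v$ gives $G_{F(u)-F(\phi)}=G_{\partial F(\phi)w+R_F(w)}$, i.e.~\eqref{15}, and plugging this into~\eqref{14} yields~\eqref{17A}.

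\emph{Step 3: Linearize the birth operator in the boundary term.} Using~\eqref{500b} to write $B_u(t)=\int_0^{a_m}b(\bar u(t),a)u(t,a)\,\rd a$, subtracting $\phi(0)=\int_0^{a_m}b(\bar\phi,a)\phi(a)\,\rd a$, and applying the expansion~\eqref{14Ea} of $b(\bar v,\cdot)v-b(\bar\phi,\cdot)\phi$ pointwise in $a$, one obtains (after using the linearity of $\partial b(\bar\phi,\sigma)[\cdot]$ and Fubini to move the weight $\nu(a)$ inside) precisely $B_w(t)=\int_0^{a_m}\mathfrak b_\phi(a)w(t,a)\,\rd a+h_w(t)$, which is~\eqref{16}; identifying the first integral with $\mathcal M_\phi(w(t))$ via~\eqref{17C}--\eqref{bb} gives~\eqref{17B}, with $h_w$ as in~\eqref{17Bb}. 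The regularity $w\in C(I(u_0),\E_\alpha)$ is inherited directly from $u(\cdot;u_0)\in C(I(u_0),\E_\alpha)$ (Proposition~\ref{T1}) and $\phi\in\E_1\hookrightarrow\E_\alpha$.

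\emph{Where the work really is.} There is no genuine obstacle in Proposition~\ref{intermediate} itself --- it is a reorganization of~\eqref{14}, \eqref{15}, \eqref{16}, all of which are established in the lines preceding the statement. The only point requiring a little care is the Fubini/linearity manipulation in Step 3 that turns $\int_0^{a_m}\partial b(\bar\phi,\sigma)[\bar w(t)]\phi(\sigma)\,\rd\sigma$ into a double integral with kernel $\mathfrak b_\phi$, and the verification that $\mathfrak b_\phi$ indeed lies in $L_1(J,\ml(E_\theta))\cap L_\infty(J,\ml(E_\theta))$ for $\theta\in\{0,\alpha,\vartheta\}$; this uses exactly~\eqref{A1d}, \eqref{B3}, and~\eqref{B2b}, and is the reason those technical hypotheses were imposed. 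The substantive analytic content --- the $o(\|w\|_{\E_\alpha})$ control of the remainders and the smoothing estimate~\eqref{o9} for $G$ --- has already been done upstream, so the proof here is short.
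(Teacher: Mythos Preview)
Your proposal is correct and follows essentially the same approach as the paper: the proposition is stated as an immediate consequence of the displays~\eqref{14}, \eqref{15}, and \eqref{16} derived in the preceding text, and your three steps reproduce precisely those derivations (subtraction using Lemma~\ref{phi}, linearization of $F$ via~\eqref{14G}, and linearization of the birth term via~\eqref{14Ea}--\eqref{bb}). The paper does not give a separate formal proof beyond citing these three formulas, so your write-up is in fact slightly more detailed than the original.
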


It is worth pointing out that $w$ is thus the (generalized) solution to
	\begin{align*}
		\partial_t w+ \partial_aw \, &=     A(a)w +\partial F(\phi)w+R_F(w)\,, && t\in I(u_0)\, ,\quad a\in J\, ,\\ 
		w(t,0)&=\mathcal{M}_\phi(w(t,\cdot)) +h_w(t) \,, && t\in I(u_0)\,, \\
		w(0,a)&=  w_0(a)\,, && a\in J\,,
	\end{align*} 
where $\mathcal{M}_\phi$ from \eqref{17C} is the linearization of the right-hand side of the age boundary condition~\eqref{P2}.
The proof of \Cref{T2} is then based on suitable estimates on $w$ given by~\eqref{17}. To this end it is instrumental to investigate first the linear counterpart of~\eqref{17} more generally. This is the purpose of the next section. Regarding the data note that 
\begin{equation*}
	\partial F(\phi)w+R_F(w)\in C\big([0,T],\E_0\big)\,,\quad h_w\in C\big([0,T],E_0\big)\,,\quad w_0\in\E_\alpha\,.
\end{equation*}
We then shall return to~\Cref{intermediate} and continue from there in Section~\ref{Sec:6}.


\section{The Linearized Problem}\label{Sec:Linear}


As just announced it is appropriate at this stage to consider the linear version of~\eqref{17}. More precisely, given $T>0$, we fix
\begin{equation}\label{99}
f\in C\big([0,T],\E_0\big)\,,\quad h\in C\big([0,T],E_0\big)\,,\quad z \in\E_0\,,\quad \gamma\in\R\,,
\end{equation}
and set
\begin{equation}\label{4997}
\Pi_\gamma(a,\sigma):=e^{-\gamma(a-\sigma)}\Pi(a,\sigma)\,,\qquad a\in J\,,\quad 0\le \sigma\le a \,,
\end{equation}
and
\begin{subequations}\label{1000}
\begin{equation}\label{4999}
G_{f}^\gamma(t,a):=\int_{(t-a)_+}^{t}\Pi_\gamma(a,a-t+s)\, f(s,a-t+s)\,\rd s\,,\qquad a\in J\,,\quad t\in [0,T]\,.
\end{equation}
The additional parameter $\gamma$ is introduced for technical reasons, its role will become clear later in Section~\ref{Sec:6} (see~\eqref{2i3} for definiteness). We then define in dependence on these data the function $W=W_{z ,f}^{\gamma,h}$  by
	\begin{equation}\label{4998}
		W_{z ,f}^{\gamma,h}(t,a)\, :=\, \left\{ \begin{aligned}
			&\Pi_\gamma(a,a-t)\, z (a-t) + G_{f}^\gamma(t,a)\, ,& &   (t,a)\in [0,T]\times J\,,\  t\le a\, ,\\
			& \Pi_\gamma(a,0)\, B_{z ,f}^{\gamma,h}(t-a)+ G_{f}^\gamma(t,a)\, ,& &  (t,a)\in [0,T]\times J\, ,\ t>a\, ,
		\end{aligned}
		\right.
	\end{equation}
where $B=B_{z ,f}^{\gamma,h}$ satisfies
\begin{align}\label{5000}
		B(t)\,  =\, & \int_0^t  \mathfrak{b}_\phi(a)\, \Pi_\gamma(a,0)\, B(t-a)\, \rd
			a\, +\int_t^{a_m}  \mathfrak{b}_\phi(a)\, \Pi_\gamma(a,a-t)\, z (a-t)\, \rd a\,\nonumber\\
			&  +\int_0^{a_m}  \mathfrak{b}_\phi(a)\, G_{f}^\gamma(t,a)\, \rd a + h(t) 
	\end{align}
\end{subequations}
with the understanding in the following that $\mathfrak{b}_\phi(a)=0$ whenever $a\notin J$. That is,
	\begin{align}\label{key}
B_{z ,f}^{\gamma,h}(t)=\mathcal{M}_\phi\big(W_{z ,f}^{\gamma,h}(t,\cdot)\big) +h(t)\,,\quad t\in [0,T]\,.
	\end{align}
Let us point out that $W=W_{z ,f}^{\gamma,h}$ represents the (generalized) solution  to the linear problem 
	\begin{align*}
		\partial_t W+ \partial_aW \, &=     \big(-\gamma+A(a)\big) W +f(t,a)\,, && t\in [0,T]\, ,\quad a\in J\, ,\\ 
		W(t,0)&=\mathcal{M}_\phi\big(W(t,\cdot)\big) +h(t) \,, && t\in [0,T]\,, \\
		W(0,a)&=  z (a)\,, && a\in J\,,
	\end{align*} 
and is formally obtained by an integration along characteristics. The subsequent auxiliary results are considerably easier to derive on the formal level of this differential equation. 

The linear structure of \eqref{1000} ensures the superposition
\begin{equation}\label{W-1}
	W_{z ,f}^{\gamma,h}=W_{z ,0}^{\gamma,0}+W_{0,f}^{\gamma,0}+W_{0,0}^{\gamma,h}\,.
\end{equation}

The aim now is to give a semigroup based representation formula for $W_{z ,f}^{\gamma,h}$ related to the data $(z ,f,\gamma,h)$ which we then shall exploit for the nonlinear problem~\eqref{17}.

\subsection*{The Linearized Age Boundary Operator}

Starting  with $B_{z ,f}^{\gamma,h}$ we show, in particular, that it is well-defined and collect further properties in the next lemma.

\begin{lem}\label{L1}
 Suppose~\eqref{99}. There is a unique 
 $B=B_{z ,f}^{\gamma,h}\in C([0,T],E_0)$
 satisfying~\eqref{5000}. Moreover, it decomposes as
\begin{equation}\label{Bg}
 B_{z ,f}^{\gamma,h}(t)=B_{z ,0}^{\gamma,0}(t)+B_{0,f}^{\gamma,0} (t)+B_{0,0}^{\gamma,h}(t)\,,\quad t\in [0,T]\,,
\end{equation}
with
\begin{equation}\label{72}
	B_{z ,0}^{\gamma,0}(t)=e^{-\gamma t}\,B_{z ,0}^{0,0}(t)\,,\quad t\ge 0\,,
\end{equation}
and
\begin{equation}\label{72C}
	B_{0,f}^{\gamma,0} (t)=\int_0^t B_{f(s),0}^{\gamma,0} (t-s) \, \rd s \,,\quad t\in [0,T]\,.
\end{equation}
\end{lem}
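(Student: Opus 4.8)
\textbf{Proof plan for \Cref{L1}.}

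The plan is to treat \eqref{5000} as a linear Volterra equation of convolution type for the unknown $B\in C([0,T],E_0)$ and solve it by the standard contraction-on-short-intervals argument, then extend to $[0,T]$ by the usual concatenation. Writing \eqref{5000} as $B=\Phi(B)$, where
\[
\Phi(B)(t):=\int_0^t \mathfrak{b}_\phi(a)\,\Pi_\gamma(a,0)\,B(t-a)\,\rd a + g(t)\,,
\]
with the inhomogeneity
\[
g(t):=\int_t^{a_m}\mathfrak{b}_\phi(a)\,\Pi_\gamma(a,a-t)\,z(a-t)\,\rd a + \int_0^{a_m}\mathfrak{b}_\phi(a)\,G_f^\gamma(t,a)\,\rd a + h(t)\,,
\]
I would first check that $g\in C([0,T],E_0)$. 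For the first term of $g$ one substitutes $a\mapsto a+t$ to get $\int_0^{a_m}\mathfrak{b}_\phi(a+t)\Pi_\gamma(a+t,a)z(a)\,\rd a$ and uses $z\in\E_0=L_1(J,E_0)$ together with the bound $\|\mathfrak{b}_\phi(a+t)\Pi_\gamma(a+t,a)\|_{\ml(E_0)}\le \|\mathfrak{b}_\phi\|_{L_\infty(J,\ml(E_0))}\,M_0\,e^{(\varpi-\gamma)a}$ coming from~\eqref{bb}, \eqref{EO}, and \eqref{4997}, so dominated convergence gives continuity in $t$; the second term is continuous because $G_f^\gamma\in C([0,T],\E_\alpha)\hookrightarrow C([0,T],\E_0)$ by the estimate already carried out for~\eqref{o9} (applied with $\Pi_\gamma$ in place of $\Pi$) and $\mathfrak{b}_\phi\in L_\infty(J,\ml(E_0))$; the third term is continuous by~\eqref{99}. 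Next, $\Phi$ maps $C([0,T],E_0)$ into itself — continuity of $t\mapsto\int_0^t\mathfrak{b}_\phi(a)\Pi_\gamma(a,0)B(t-a)\,\rd a$ follows again from the $L_\infty$-bound on $\mathfrak{b}_\phi\Pi_\gamma(\cdot,0)$ (valid since $\varpi<0$ when $a_m=\infty$ by~\eqref{A4}, or trivially when $a_m<\infty$) and dominated convergence. The contraction estimate is the routine one: on $C([0,\tau],E_0)$ with the sup norm,
\[
\|\Phi(B_1)-\Phi(B_2)\|_{C([0,\tau],E_0)}\le \|\mathfrak{b}_\phi\|_{L_\infty(J,\ml(E_0))}\,M_0\,\tau\,\|B_1-B_2\|_{C([0,\tau],E_0)}\,,
\]
so for $\tau$ small enough $\Phi$ is a contraction; this yields a unique fixed point on $[0,\tau]$, and since $\tau$ does not depend on the initial segment one continues over $[\tau,2\tau],[2\tau,3\tau],\dots$ to obtain the unique $B=B^{\gamma,h}_{z,f}\in C([0,T],E_0)$ satisfying~\eqref{5000}.

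For the decomposition~\eqref{Bg}: since both $\Phi$ and its inhomogeneity $g$ are affine-linear in the data $(z,f,h)$ and the fixed point is unique, the solution operator $(z,f,h)\mapsto B^{\gamma,h}_{z,f}$ is linear, which is exactly~\eqref{Bg}. For~\eqref{72} one observes that with $f=h=0$ the equation for $B^{\gamma,0}_{z,0}$ becomes, after inserting $\Pi_\gamma=e^{-\gamma(a-\sigma)}\Pi$,
\[
B(t)=e^{-\gamma t}\!\int_0^t\!\mathfrak{b}_\phi(a)\,\Pi(a,0)\,e^{\gamma(t-a)}B(t-a)\,\rd a + e^{-\gamma t}\!\int_t^{a_m}\!\mathfrak{b}_\phi(a)\,\Pi(a,a-t)\,z(a-t)\,\rd a\,,
\]
so substituting $\widetilde B(t):=e^{\gamma t}B(t)$ gives precisely the $\gamma=0$ equation for $\widetilde B$; by uniqueness $\widetilde B=B^{0,0}_{z,0}$, i.e. $B^{\gamma,0}_{z,0}(t)=e^{-\gamma t}B^{0,0}_{z,0}(t)$. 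For~\eqref{72C} one checks that the right-hand side $t\mapsto\int_0^t B^{\gamma,0}_{f(s),0}(t-s)\,\rd s$ solves the defining equation~\eqref{5000} with data $(0,f,0)$: Fubini and the convolution identity $G^\gamma_f(t,a)=\int_0^t \big(\Pi_\gamma(a,a-(t-s))z\big)\big|_{z=f(s)}(a-(t-s))\,\rd s$ valid for $a<t$ (together with the matching integral over $\{a\ge t\}$) let one rewrite the Volterra terms so that interchanging the $s$-integral with the $a$-integral reproduces the equation for $B^{\gamma,0}_{f(s),0}(t-s)$ integrated over $s$; then uniqueness identifies this with $B^{\gamma,0}_{0,f}$.

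The main obstacle I anticipate is bookkeeping rather than anything deep: namely, carefully justifying the Fubini interchanges and the splitting of the $a$-integrals at $a=t$ (and at $a=t-s$) in the verification of~\eqref{72C}, and making sure all the integrands are genuinely in $L_1$ uniformly in the relevant parameters — this is where one must lean on $z,f(s)\in\E_0$, on $\mathfrak{b}_\phi\in L_\infty(J,\ml(E_0))$ from~\eqref{bb}, and on the exponential decay~\eqref{EO}, \eqref{A4} of $\Pi_\gamma$ in the case $a_m=\infty$. The short-time contraction and the linearity/uniqueness arguments are entirely standard once these measurability and integrability points are in place.
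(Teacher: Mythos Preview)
Your proposal is correct and follows essentially the same route as the paper: the paper obtains existence and uniqueness of $B_{z,f}^{\gamma,h}\in C([0,T],E_0)$ by invoking \cite[Lemma~6.1]{WalkerIUMJ} (whose content is precisely the short-interval contraction argument you sketch), then derives~\eqref{Bg} from linearity and uniqueness, proves~\eqref{72} via the substitution $\widetilde B(t)=e^{\gamma t}B(t)$ and uniqueness exactly as you do, and verifies~\eqref{72C} by writing out $B_{f(s),0}^{\gamma,0}(t-s)$ from~\eqref{5000}, integrating over $s$, applying Fubini to recognize the equation for $B_{0,f}^{\gamma,0}$, and concluding by uniqueness. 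The only caveat is that your dominated-convergence argument for the continuity of $t\mapsto\int_t^{a_m}\mathfrak{b}_\phi(a)\Pi_\gamma(a,a-t)z(a-t)\,\rd a$ tacitly treats $\mathfrak{b}_\phi$ as continuous, whereas it is merely in $L_\infty$; one should instead argue via continuity of translations in $L_1(J,E_0)$ together with the uniform bound on $\Pi_\gamma$---this is exactly the kind of bookkeeping you already flagged.
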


\begin{proof}
It follows as in \eqref{o9}  that
$$
[f\mapsto G_{f}^\gamma ]\in \ml\big(C\big([0,T],\E_0\big),C\big([0,T],\E_\alpha\big)\big)\,. 
$$
Setting
$$
\tilde h(t):=\mathcal{M}_\phi\big(G_{f}^\gamma(t,\cdot)\big)+ h(t)\,,\quad t\in [0,T]\,,
$$
we thus obtain $\tilde h\in C([0,T],E_0)$ due to \eqref{99} and \eqref{17Cx}. Therefore,  \cite[Lemma 6.1]{WalkerIUMJ} along with \eqref{A1a}, \eqref{EO}, \eqref{A4}, \eqref{A1d}, \eqref{B3}, and \eqref{B2b} entails that there exists a unique function \mbox{$B=B_{z ,f}^{\gamma,h}\in C([0,T],E_0)$} satisfying~\eqref{5000} and
	\begin{equation}\label{ml}
	\big[z \to B_{z ,0}^{\gamma,0}\big]\in \ml\big(\E_0,C([0,T],E_0)\big)\,.
\end{equation}
 The linear structure of~\eqref{5000}  ensures~\eqref{Bg}. As for~\eqref{72} note first that $T>0$ can be chosen arbitrary if $f$ and $h$ are zero. From~\eqref{5000} and~\eqref{4997} we derive
\begin{align*}
e^{\gamma t}B_{z ,0}^{\gamma,0} (t)\,  =\, \int_0^t  \mathfrak{b}_\phi(a)\, \Pi(a,0)\, e^{\gamma (t-a)}\, B_{z ,0}^{\gamma,0}(t-a)\, \rd
	a +\int_t^{a_m}  \mathfrak{b}_\phi(a)\, \Pi(a,a-t)\, z (a-t)\, \rd a
\end{align*}
for $t\ge 0$. That is, $t\mapsto e^{\gamma t}B_{z ,0}^{\gamma,0}(t)$ satisfies the same equation as $B_{z ,0}^{0,0}$. Uniqueness implies then~\eqref{72}. 

With the same idea we prove~\eqref{72C}. To this end, we first note that the integral in~\eqref{72C} is well defined, since 
$$
\big[s \mapsto B_{f(s),0}^{\gamma,0}(t-s)\big]\in C\big([0,t],E_0\big)\,,\quad t\in [0,T]\,,
$$ 
as is easily seen by the triangle inequality together with~\eqref{ml} and the assumption $f\in C\big([0,T],\E_0\big)$. Next, by~\eqref{5000} we have
\begin{align*}
B_{f(s),0}^{\gamma,0}(t-s)  =\, & \int_0^{t-s}  \mathfrak{b}_\phi(a)\, \Pi_\gamma(a,0)\,  B_{f(s) ,0}^{\gamma,0}(t-s-a)\, \rd a\\
& +\int_{t-s}^{a_m}  \mathfrak{b}_\phi(a)\, \Pi_\gamma(a,a-t+s)\, f(s,a-t+s)\, \rd a 
\end{align*}
so that
\begin{align*}
	\int_0^t B_{f(s),0}^{\gamma,0}(t-s) \, \rd s
	=\, & \int_0^t \int_0^{t-s}  \mathfrak{b}_\phi(a)\, \Pi_\gamma(a,0)\,  B_{f(s) ,0}^{\gamma,0}(t-s-a)\, \rd a\, \rd s\\
	& +\int_0^t\int_{t-s}^{a_m}  \mathfrak{b}_\phi(a)\, \Pi_\gamma(a,a-t+s)\, f(s,a-t+s)\, \rd a\, \rd s 
\end{align*}
for $t\in [0,T]$.
Therefore, applying Fubini's theorem, we derive
\begin{align*}
	\int_0^t B_{f(s),0}^{\gamma,0}(t-s) \, \rd s
  =\, & \int_0^t  \mathfrak{b}_\phi(a)\, \Pi_\gamma(a,0)\, \left(\int_0^{t-a}  B_{f(s) ,0}^{\gamma,0}(t-a-s)\, \rd s\right) \rd a\\
		& +\int_0^{a_m}\mathfrak{b}_\phi(a)\int_{(t-a)_+}^{t}   \Pi_\gamma(a,a-t+s)\, f(s,a-t+s)\, \rd s\, \rd a 
\end{align*}
Consequently, recalling~\eqref{5000} and~\eqref{4999}, we see  that $t \mapsto \int_0^t B_{f(s)}^{\gamma,0,0} (t-s) \, \rd s$ satisfies the same equation as $B_{0,f}^{\gamma,0}$ so that~\eqref{72C} follows by uniqueness.
\end{proof}

We next derive an estimate on $B_{0,0}^{\gamma,h}$.

\begin{lem}\label{LemmaFF}
	Let $h\in C\big([0,T],E_0\big)$ and $\gamma\in\R$. Then there are constants $\mu=\mu(\alpha,b,\phi)>0$ and $c_1=c_1(\alpha,b,\phi)>0$ (both independent of $\gamma$ and $h$) such that
	\begin{align}\label{70}
		\|B_{0,0}^{\gamma,h}(t)\|_{E_0} 
		\le  c_1 \int_0^t (t-a)^{-\alpha}  \,  e^{(\mu+\varpi-\gamma) (t-a)}\, \|h(a)\|_{E_0}\, \rd a\, + \|h(t)\|_{E_0}\,,\quad t\in [0,T]\,.
	\end{align}
\end{lem}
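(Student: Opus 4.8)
The statement concerns $B_{0,0}^{\gamma,h}$, i.e. the solution of the Volterra equation~\eqref{5000} with $z=0$ and $f=0$, so that only the term $h(t)$ and the first integral involving $\mathfrak{b}_\phi(a)\Pi_\gamma(a,0)B(t-a)$ survive. Writing $B=B_{0,0}^{\gamma,h}$, the equation reads
\begin{align*}
B(t)=\int_0^t \mathfrak{b}_\phi(a)\,\Pi_\gamma(a,0)\,B(t-a)\,\rd a + h(t)\,,\quad t\in[0,T]\,.
\end{align*}
The plan is to treat this as a linear Volterra equation of convolution type and iterate it. First I would establish the pointwise kernel bound: from~\eqref{bb} the operator norm $\|\mathfrak{b}_\phi(a)\|_{\ml(E_\alpha,E_0)}$ (more precisely $\|\mathfrak{b}_\phi(a)\|_{\ml(E_0)}$, but one really needs the mapping into $E_\alpha$ for the smoothing) is essentially bounded, while~\eqref{EO} and~\eqref{4997} give $\|\Pi_\gamma(a,0)\|_{\ml(E_0,E_\alpha)}\le M_\alpha a^{-\alpha}e^{(\varpi-\gamma)a}$. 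Hence the convolution kernel $k(a):=\|\mathfrak{b}_\phi(a)\Pi_\gamma(a,0)\|_{\ml(E_0)}$ satisfies $k(a)\le C\,a^{-\alpha}e^{(\varpi-\gamma)a}$ for some constant $C=C(\alpha,b,\phi)$ independent of $\gamma$ and $h$.

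Next I would solve the scalar majorant equation. Setting $\beta(t):=\|B(t)\|_{E_0}$ and $g(t):=\|h(t)\|_{E_0}$, the triangle inequality yields $\beta(t)\le (k*\beta)(t)+g(t)$. Because $a^{-\alpha}$ is integrable near $0$ (as $\alpha\in[0,1)$), $k\in L_1^{\mathrm{loc}}$, so the resolvent kernel $r=\sum_{j\ge 1}k^{*j}$ of the scalar equation is well-defined on $[0,T]$, giving $\beta(t)\le g(t)+(r*g)(t)$. The key is to bound $r$. I would argue that the $n$-fold convolution of $a^{-\alpha}$ is controlled by a Beta-function computation: $(a^{-\alpha})^{*n}(t)=\dfrac{\Gamma(1-\alpha)^n}{\Gamma(n(1-\alpha))}\,t^{n(1-\alpha)-1}$, and incorporating the exponential weight $e^{(\varpi-\gamma)a}$ (which factors through convolutions) together with the geometric factor $C^n$, the series $\sum_n C^n (a^{-\alpha})^{*n}e^{(\varpi-\gamma)\cdot}$ converges to something of the form $c_1\,t^{-\alpha}e^{(\mu+\varpi-\gamma)t}$ for a suitable $\mu>0$ absorbing the constant $C$ and the Gamma-function growth (this is the standard Mittag-Leffler-type estimate for fractional Volterra kernels). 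Crucially $\mu$ and $c_1$ depend only on $\alpha$, $C$, hence only on $\alpha$, $b$, $\phi$, and not on $\gamma$ or $h$, since $\gamma$ enters only through the exponential shift. Substituting back into $\beta(t)\le g(t)+(r*g)(t)$ gives exactly~\eqref{70}.

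Two technical points deserve care. Existence and uniqueness of $B=B_{0,0}^{\gamma,h}\in C([0,T],E_0)$ is already granted by \Cref{L1}, so I only need the a priori estimate; still, one should make sure the iteration/resolvent argument is carried out on the level of norms (scalar Volterra theory) rather than operators, which sidesteps any non-commutativity issue. The main obstacle is the careful bookkeeping in the resolvent kernel estimate: one must verify that the constant $C^n$ coming from the $n$-fold composition of $\mathfrak{b}_\phi\Pi_\gamma$, multiplied by the Beta-function factor $\Gamma(1-\alpha)^n/\Gamma(n(1-\alpha))$, still sums to a convergent series whose value has the claimed form $c_1 t^{-\alpha}e^{(\mu+\varpi-\gamma)t}$ with $\mu$ independent of $\gamma$ — this is where choosing $\mu$ large enough (depending on $C=C(\alpha,b,\phi)$) to dominate the factorial-type growth is essential. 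Everything else is routine manipulation of convolutions and the evolution-operator estimates~\eqref{EO}, \eqref{4997}.
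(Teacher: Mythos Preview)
Your argument is correct and follows essentially the same route as the paper. Both start from the Volterra equation
\[
B(t)=\int_0^t \mathfrak{b}_\phi(a)\,\Pi_\gamma(a,0)\,B(t-a)\,\rd a + h(t)
\]
and bound the kernel by $C\,a^{-\alpha}e^{(\varpi-\gamma)a}$ via~\eqref{bb} and~\eqref{EO}. The only difference is packaging: the paper multiplies through by $e^{-(\varpi-\gamma)t}$ to strip the $\gamma$-dependence from the kernel and then invokes the singular Gronwall inequality of Henry~\cite[Lemma~7.1.1]{Henry} as a black box, whereas you build the resolvent kernel $r=\sum_{j\ge 1}k^{*j}$ by hand using the Beta-function identity for iterated convolutions of $a^{-\alpha}$ and the Mittag-Leffler growth estimate. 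Your iteration is precisely the content of Henry's lemma, so the two arguments are the same in substance; citing the Gronwall result is shorter, while your version is self-contained and makes the origin of $\mu$ transparent.
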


\begin{proof}
	We use~\eqref{5000}, \eqref{EO}, ~\eqref{A1b} and~\eqref{14C} to get 
\begin{align*}
	\|B_{0,0}^{\gamma,h}(t)\|_{E_0}  \le\, & \int_0^t  \|\mathfrak{b}_\phi(a)\|_{\ml(E_\alpha,E_0)}\, \|\Pi_\gamma(a,0)\|_{\ml(E_0,E_\alpha)}\, \|B_{0,0}^{\gamma,h}(t-a)\|_{E_0}\, \rd
	a\, +\|h(t)\|_{E_0}\\
	\le \, & \|\mathfrak{b}_\phi\|_{L_\infty(J,\ml(E_\alpha,E_0))}\, M_\alpha \int_0^t  e^{(\varpi-\gamma) a}\, a^{-\alpha}  \, \|B_{0,0}^{\gamma,h}(t-a)\|_{E_0}\, \rd a\,+\| h(t)\|_{E_0}	
\end{align*}
for $t\in [0,T]$. That is, there is some $c=c(\alpha,b,\phi)>0$ such that
\begin{align*}
	e^{-(\varpi-\gamma) t}\|B_{0,0}^{\gamma,h}(t)\|_{E_0} 
	\le  c \int_0^t (t-a)^{-\alpha}  \,  e^{-(\varpi-\gamma) a}\, \|B_{0,0}^{\gamma,h}(a)\|_{E_0}\, \rd a\, +e^{-(\varpi-\gamma) t}\|h(t)\|_{E_0}
\end{align*}
for $t\in [0,T]$. Gronwall's inequality~\cite[Lemma 7.1.1]{Henry} now implies~\eqref{70} for some constants \mbox{$\mu=\mu(\alpha,b,\phi)>0$} and $c_1=c_1(\alpha,b,\phi)>0$.
\end{proof}

\subsection*{The Linear Part and its Associated Semigroup}

Now that $B_{z,f}^{\gamma,h}$ is well-defined we shall focus on $W_{z,f}^{\gamma,h}$. We begin with the linear part $W_{z ,0}^{\gamma,0}$ (with vanishing $f$ and $h$). We first note that it defines a strongly continuous semigroup on $\E_0$ and, due to the regularizing effects of the evolution operator $\Pi$, also on $\E_\alpha$. This semigroup was investigated in \cite{WalkerIUMJ}. We recall the main results and add some other useful properties.
 
\begin{prop}\label{LemmaBB}
	Set 
	$$
	\mS (t)z :=W_{z ,0}^{0,0}(t,\cdot)\,,\qquad t\ge 0\,,\quad z\in\E_0\,.
	$$ 
	Then $(\mS(t))_{t\ge 0}$ is a strongly continuous semigroup on $\E_0$ and (its restriction) also on $\E_\alpha$ with
	\begin{equation}\label{E3o}
		\|\mS(t)\|_{\ml(\E_0,\E_\alpha)}\le C_\alpha\, t^{-\alpha} e^{\varsigma_\alpha t}\,,\quad t> 0\,,
	\end{equation}
	for some $C_\alpha\ge 1$ and $\varsigma_\alpha\in\R$. Moreover,
	\begin{equation}\label{LemmaG}
		W_{z ,0}^{\gamma,0}(t,\cdot)=e^{-\gamma t}\,\mS (t)z\,,\qquad t\ge 0\,,\quad z\in\E_0\,,
	\end{equation}
	and, for $z\in\E_0$ and $f\in C\big([0,T],\E_0\big)$ with $T>0$,
	\begin{equation}\label{W2}
	W_{z ,f}^{\gamma,0}(t,\cdot)=e^{-\gamma t}\mS (t)z +\int_0^t e^{-\gamma (t-s)}\mS (t-s) f(s)\,\rd s\,,\quad t\in [0,T]\,.
	\end{equation}
\end{prop}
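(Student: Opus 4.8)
\textbf{Proof plan for \Cref{LemmaBB}.}
The plan is to establish the semigroup property first, then the regularizing estimate, then the two identities \eqref{LemmaG} and \eqref{W2}, in that order. For the semigroup property, I would note that $\mS(0)z = W_{z,0}^{0,0}(0,\cdot) = \Pi(a,a)z(a) = z(a)$ (the case $t\le a$ with $t=0$), so $\mS(0) = \mathrm{id}$. The identity $\mS(t+s) = \mS(t)\mS(s)$ should follow by decomposing according to the relative sizes of $a$, $t$, $s$ and carefully using the evolution property \eqref{evol} of $\Pi$, the translation structure of $G_{f}^\gamma$ with $f=0$, and the Volterra equation \eqref{5000} for $B_{z,0}^{0,0}$. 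Concretely, one writes out $\mS(t+s)z$ and inserts $\mS(s)z$ as the ``new initial datum'' at time $t$; matching the four combinations of $\{t+s\le a, t+s>a\}$ against $\{s\le a, s>a\}$ and invoking uniqueness from \Cref{L1} for the boundary term $B_{\mS(s)z,0}^{0,0}$ gives the claim. Strong continuity on $\E_0$ and $\E_\alpha$ was already obtained in \cite{WalkerIUMJ}; I would simply cite that (e.g. \cite[Theorem~2.8]{WalkerIUMJ}) and record it here for completeness.

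For the regularizing estimate \eqref{E3o}, the key is to bound the $\E_\alpha$-norm of $\mS(t)z = W_{z,0}^{0,0}(t,\cdot)$ for $z\in\E_0$. One splits the $a$-integral over $J$ into the regions $\{a\ge t\}$ and $\{a<t\}$. On $\{a\ge t\}$ we use $\|\Pi(a,a-t)z(a-t)\|_{E_\alpha} \le M_\alpha t^{-\alpha} e^{\varpi t}\|z(a-t)\|_{E_0}$ from \eqref{EO} and integrate in $a$, producing a factor $t^{-\alpha}e^{\varpi t}\|z\|_{\E_0}$. On $\{a<t\}$ we have the term $\Pi(a,0)B_{z,0}^{0,0}(t-a)$; here we need $\|B_{z,0}^{0,0}\|_{C([0,T],E_0)} \le c\|z\|_{\E_0}$ (this is \eqref{ml} from \Cref{L1}) together with the estimate $\|\Pi(a,0)\|_{\ml(E_0,E_\alpha)}\le M_\alpha a^{-\alpha}e^{\varpi a}$, and integrating $a^{-\alpha}$ over $(0,t)$ gives a finite $t^{1-\alpha}$ contribution. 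Combining, $\|\mS(t)z\|_{\E_\alpha}\le C_\alpha t^{-\alpha}e^{\varsigma_\alpha t}\|z\|_{\E_0}$ for suitable $C_\alpha,\varsigma_\alpha$; when $a_m=\infty$ one uses \eqref{A4} so $\varpi<0$ and $B_{z,0}^{0,0}$ is still controlled, and when $a_m<\infty$ the $T$-dependence is harmless after adjusting $\varsigma_\alpha$.

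For \eqref{LemmaG} I would argue by uniqueness: by \eqref{4997}-\eqref{4998} with $f=h=0$, $e^{\gamma t}W_{z,0}^{\gamma,0}(t,a)$ equals $e^{\gamma t}\Pi_\gamma(a,a-t)z(a-t) = \Pi(a,a-t)z(a-t)$ on $\{t\le a\}$ and $e^{\gamma t}\Pi_\gamma(a,0)B_{z,0}^{\gamma,h}(t-a) = \Pi(a,0)e^{\gamma(t-a)}B_{z,0}^{\gamma,0}(t-a)$ on $\{t>a\}$; using \eqref{72} from \Cref{L1}, $e^{\gamma(t-a)}B_{z,0}^{\gamma,0}(t-a) = B_{z,0}^{0,0}(t-a)$, so $e^{\gamma t}W_{z,0}^{\gamma,0}(t,\cdot)$ satisfies exactly the defining relations of $W_{z,0}^{0,0}(t,\cdot) = \mS(t)z$, whence \eqref{LemmaG}. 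For \eqref{W2} I would use the superposition \eqref{W-1}, namely $W_{z,f}^{\gamma,0} = W_{z,0}^{\gamma,0} + W_{0,f}^{\gamma,0}$, treat the first summand by \eqref{LemmaG}, and for the second observe that $G_f^\gamma(t,a) = \int_0^t \Pi_\gamma(a,a-t+s)f(s,a-t+s)\,\mathbf{1}_{\{a\ge t-s\}}\,\rd s$ together with \eqref{72C} (which writes $B_{0,f}^{\gamma,0}$ as a convolution of $B_{f(s),0}^{\gamma,0}$) exhibits $W_{0,f}^{\gamma,0}(t,\cdot)$ as $\int_0^t W_{f(s),0}^{\gamma,0}(t-s,\cdot)\,\rd s = \int_0^t e^{-\gamma(t-s)}\mS(t-s)f(s)\,\rd s$, again via \eqref{LemmaG} and a Fubini argument analogous to the one in the proof of \Cref{L1}.

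The main obstacle I expect is the careful bookkeeping in the semigroup identity $\mS(t+s) = \mS(t)\mS(s)$: one must correctly handle the three age-regimes ($a\ge t+s$, $s\le a<t+s$, $a<s$), show that the boundary trace of $\mS(t)(\mS(s)z)$ solves the Volterra equation \eqref{5000} with initial datum $\mS(s)z$, and then invoke the uniqueness statement of \Cref{L1} — the algebra is routine but error-prone. Everything else (the estimate \eqref{E3o}, and the identities \eqref{LemmaG}, \eqref{W2}) reduces cleanly to \eqref{EO}, \eqref{ml}, \eqref{72}, \eqref{72C}, and Fubini.
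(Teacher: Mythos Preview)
Your treatment of \eqref{LemmaG} and \eqref{W2} matches the paper's proof essentially line by line: the paper also derives \eqref{LemmaG} directly from \eqref{4997}, \eqref{4998}, and \eqref{72}, and then proves \eqref{W2} by the superposition $W_{z,f}^{\gamma,0}=W_{z,0}^{\gamma,0}+W_{0,f}^{\gamma,0}$, identifying the second summand as the convolution integral via \eqref{72C} and the case split $t\le a$ versus $t>a$.

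Where you diverge is in the first half. The paper does \emph{not} verify the semigroup law $\mS(t+s)=\mS(t)\mS(s)$ by hand, nor does it derive \eqref{E3o} by splitting the $a$-integral; both facts are taken wholesale from \cite[Theorem~2.8]{WalkerIUMJ} under the standing hypotheses \eqref{A1a}, \eqref{EO}, \eqref{A4}, \eqref{A1d}, \eqref{B3}, \eqref{B2b}. So the ``main obstacle'' you anticipate (the three-regime bookkeeping for the semigroup identity) is simply absent in the paper's argument. Your direct route is not wrong, but it carries a small debt you do not fully pay: the bound you invoke from \eqref{ml}, $\|B_{z,0}^{0,0}\|_{C([0,T],E_0)}\le c\,\|z\|_{\E_0}$, has a $T$-dependent constant, and your sketch does not explain how to convert this into the global-in-$t$ exponential form \eqref{E3o} (one needs either a Gronwall-type estimate on $B_{z,0}^{0,0}$ giving exponential growth, or the semigroup property itself to bootstrap from a fixed interval). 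This is fixable, but the paper sidesteps the issue entirely by citation.
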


\begin{proof}
It follows from assumptions \eqref{A1a}, \eqref{EO}, \eqref{A4}, \eqref{A1d}, \eqref{B3}, \eqref{B2b} together with \cite[Theorem 2.8]{WalkerIUMJ}  that $(\mS(t))_{t\ge 0}$ defines a strongly continuous semigroup on $\E_0$ and on $\E_\alpha$ satisfying~\eqref{E3o}. Identity~\eqref{LemmaG} is easily derived from the definition of $W_{z ,0}^{\gamma,0}$ in~\eqref{4998} along with~\eqref{4997} and~\eqref{72} from \Cref{L1}.
As noted in~\eqref{W-1}, the linear structure of~\eqref{1000} entails
		\begin{equation*}
			W_{z ,f}^{\gamma,0}=W_{z ,0}^{\gamma,0}+W_{0,f}^{\gamma,0}
		\end{equation*}
so that, due to~\eqref{LemmaG},  identity~\eqref{W2} will follow once we have identified the integral term therein as $W_{0,f}^{\gamma,0}$. To this end observe that~\eqref{LemmaG} ensures
\begin{equation}\label{just}
\int_0^t e^{-\gamma (t-s)}\mS (t-s) f(s)\,\rd s	=\int_0^t W_{f(s),0}^{\gamma,0}(t-s,\cdot)\,\rd s	\,,\quad t\in [0,T]\,.
\end{equation}
Let $t\in [0,T]$ and $a\in J$. If $t \leq a$, then $t-s \leq a-s \leq a$ for $s\in [0,t]$ and therefore, by~\eqref{just}, ~\eqref{4998}, and~\eqref{4999}, we indeed have
\begin{align*}
	\int_0^t \left( e^{-\gamma (t-s)}\mS (t-s) f(s) \right) (a)\, \rd s &= \int_0^t \Pi_{\gamma} (a,a-t+s) f(s,a-t+s) \, \rd s\\
	&=G_{f}^\gamma(t,a)=W_{0,f}^{\gamma,0}(t,a)
\end{align*}
in this case. Consider then $a<t$. In this case we infer from ~\eqref{just} and~\eqref{4998} that
\begin{align*}
	\int_0^t  \left( e^{-\gamma (t-s)}\mS (t-s) f(s) \right) (a)\, \rd s =\, & \Pi_{\gamma} (a,0)\int_0^{t-a}  B_{f(s),0}^{\gamma,0} (t-a-s) \, \rd s\\
	&+ \int_{t-a}^t \Pi_{\gamma} (a,s+a-t) f(s,a-t+t) \, \rd s\,.
\end{align*}
From~\eqref{72C} in \Cref{L1} and~\eqref{4999} we then obtain
\begin{align*}
	\int_0^t  \left( e^{-\gamma (t-s)}\mS (t-s) f(s) \right) (a)\, \rd s =\, & \Pi_{\gamma} (a,0) B_{0,f}^{\gamma,0} (t-a) +G_{f}^\gamma(t,a)=W_{0,f}^{\gamma,0}(t,a) \,.
\end{align*}
Consequently, we have
$$
\int_0^t e^{-\gamma (t-s)}\mS (t-s) f(s)\,\rd s=W_{0 ,f}^{\gamma,0}(t,\cdot)\,,\quad t\in [0,T]\,,
$$
which proves \eqref{W2}.
\end{proof}

As remarked previously the strongly continuous semigroup  $(\mS(t))_{t\ge 0}$ was investigated in
\cite{WalkerIUMJ}. In particular, it was shown therein that the regularizing effect stated in~\eqref{E3o} implies that its generator can be perturbed by an operator belonging to $\ml(\E_\alpha,\E_0)$ and still yields a strongly continuous semigroup on $\E_0$. We use this observation now to derive the following representation for the solution of the perturbed  Cauchy problem. So far, we refrained from indicating the dependence of $\mS(t)$ on the fixed equilibrium  $\phi$. For later use, however, we indicate this dependence in the notation of the semigroup associated with the perturbation.

\begin{prop}\label{LemmaH}
	Let $\A$ denote the infinitesimal generator of the strongly continuous semigroup $(\mS(t))_{t\ge 0}$ on $\E_0$ introduced in \Cref{LemmaBB} and consider $\B:=\partial F(\phi)\in \ml(\E_\alpha,\E_0)$. Then $\A+\B$ generates a strongly continuous semigroup $(\T_\phi(t))_{t\ge 0}$ on $\E_0$ and also on $\E_\alpha$. Moreover, there are $N_\alpha:=N_\alpha(\phi)\ge 1$ and $\omega_\alpha:=\omega_\alpha(\phi)\in\R$ such that
	\begin{equation}\label{E3ooo}
	\|\T_\phi(t)\|_{\ml(\E_\alpha)}+t^{\alpha}\,	\|\T_\phi(t)\|_{\ml(\E_0,\E_\alpha)}\le N_\alpha\,  e^{ -\omega_\alpha t}\,,\quad t\ge  0\,.
	\end{equation}
Let  $\gamma\in\R$,  $v_0\in \E_\alpha$, and $g\in C([0,T],\E_0)$. If $v\in C([0,T],\E_\alpha)$ solves
	\begin{align*}
	v(t)=e^{-\gamma t}\mS(t)v_0+\int_0^t e^{-\gamma (t-s)}\mS(t-s)\,\big((\gamma+\B) v(s)+g(s)\big)\, \rd s\,,\quad t\in [0,T]\,,
	\end{align*}
	then
	\begin{align*}
		v(t)=\T_\phi(t)v_0+\int_0^t \T_\phi(t-s)\, g(s) \, \rd s\,,\quad t\in [0,T]\,.
	\end{align*}
\end{prop}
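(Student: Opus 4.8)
\textbf{Proof plan for \Cref{LemmaH}.}
The plan is to separate the statement into two parts: first the generation claim and the estimate \eqref{E3ooo}, then the variation-of-constants identity. For the generation part, I would invoke the bounded perturbation theorem for strongly continuous semigroups in the version adapted to the regularizing pair $(\E_0,\E_\alpha)$: since $\A$ generates $(\mS(t))_{t\ge 0}$ which by \eqref{E3o} maps $\E_0$ into $\E_\alpha$ with an integrable singularity $t^{-\alpha}$ at $t=0$, and $\B=\partial F(\phi)\in\ml(\E_\alpha,\E_0)$, the perturbed generator $\A+\B$ again generates a strongly continuous semigroup on $\E_0$ and, by restriction, on $\E_\alpha$; this is exactly the perturbation result established in \cite[Theorem~2.8]{WalkerIUMJ} (applied to the already-perturbed situation, noting $\phi$ is an equilibrium so all of \eqref{A}, \eqref{B3}, \eqref{B2b} are in force). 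The a priori bound \eqref{E3ooo} follows by a Gronwall argument on the Duhamel series for $\T_\phi$ in terms of $\mS$: one writes $\T_\phi(t)=\sum_{n\ge 0}\mS_n(t)$ with $\mS_0=\mS$ and $\mS_{n+1}(t)=\int_0^t\mS(t-s)\B\mS_n(s)\,\rd s$, estimates each term using \eqref{E3o} and $\|\B\|_{\ml(\E_\alpha,\E_0)}$, and sums the resulting Mittag-Leffler-type series, which converges and yields the stated form with some $N_\alpha\ge 1$ and $\omega_\alpha\in\R$; the $t^\alpha$-weighted $\ml(\E_0,\E_\alpha)$ bound comes from the same computation keeping the $\E_\alpha$-norm on the last factor.

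For the variation-of-constants identity, fix $\gamma\in\R$, $v_0\in\E_\alpha$, $g\in C([0,T],\E_0)$, and suppose $v\in C([0,T],\E_\alpha)$ solves the integral equation with $\mS$. The strategy is to substitute $g(s)$ replaced by $(\gamma+\B)v(s)+g(s)$ and recognize that $v$ is then a mild solution, in the $\E_0$-sense, of $\dot v=(\A+\B)v+g$ with initial value $v_0$; since $\A+\B$ generates $(\T_\phi(t))_{t\ge 0}$, uniqueness of mild solutions gives the claimed formula. Concretely I would argue as follows: the term $e^{-\gamma t}\mS(t)v_0+\int_0^t e^{-\gamma(t-s)}\mS(t-s)\gamma v(s)\,\rd s$ equals, by a standard resolvent/Duhamel manipulation, the semigroup $\mS_{-\gamma}(t):=e^{-\gamma t}\mS(t)$ generated by $\A-\gamma$ applied in a variation-of-constants formula; one then peels off the $\gamma$-shift and the $\B$-perturbation one layer at a time. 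A cleaner route, which I would prefer to write up, is to iterate the given integral equation: insert the formula for $v$ into its own right-hand side repeatedly, producing the Duhamel series $v(t)=\sum_{n\ge0}\int \mS(t-s_1)e^{-\gamma(\cdots)}(\gamma+\B)\cdots$, and observe that resumming exactly reproduces $\T_\phi(t)v_0+\int_0^t\T_\phi(t-s)g(s)\,\rd s$ because $\T_\phi$ is precisely the semigroup obtained by perturbing $\mS$ (suitably shifted) by $\gamma+\B$ — and the $\gamma$-shift is neutral since it is added inside and the factor $e^{-\gamma(t-s)}$ compensates. The absolute convergence needed to rearrange the series is guaranteed by \eqref{E3o} together with $v\in C([0,T],\E_\alpha)$ and $g\in C([0,T],\E_0)$, since each factor contributes an integrable $t^{-\alpha}$ singularity and the combinatorial growth is controlled as in the generation step.

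The main obstacle I anticipate is bookkeeping the $\gamma$-shift correctly in the Duhamel expansion: one must be careful that the semigroup appearing after perturbing $\A$ by $\gamma+\B$ is $\T_\phi$ itself and not $e^{\gamma t}\T_\phi(t)$ or $e^{-\gamma t}\T_\phi(t)$. The resolution is that the integral equation for $v$ already carries the factors $e^{-\gamma(t-s)}$ outside $\mS(t-s)$ and the perturbation term is $\gamma+\B$ rather than $\B$, so the $\gamma$ inside exactly cancels the $-\gamma$ from the shifted semigroup $e^{-\gamma t}\mS(t)$, leaving the unshifted perturbation of $\mS$ by $\B$, i.e. $\T_\phi$. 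Making this cancellation rigorous is easiest by first treating the case $\gamma=0$ (where the identity is the textbook statement that a mild solution of the $\B$-perturbed problem is given by the $\B$-perturbed semigroup) and then reducing the general $\gamma$ to this case via the substitution $\tilde v(t)=v(t)$, $\tilde g(t)=g(t)$, observing that the pair $(\mS_{-\gamma},\gamma+\B)$ gives the same perturbed generator $\A+\B$ as the pair $(\mS,\B)$. Beyond this, the remaining steps — convergence of series, interchange of sums and integrals, strong continuity — are routine given \eqref{E3o} and the continuity hypotheses in \eqref{99}.
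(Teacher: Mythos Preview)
Your treatment of the generation part matches the paper's: both invoke \cite[Theorem~2.8]{WalkerIUMJ} for the fact that $\A+\B$ generates a strongly continuous semigroup on $\E_0$ and $\E_\alpha$ with the regularizing bound \eqref{E3ooo}. For the variation-of-constants identity, however, the paper takes a different route. Rather than iterating the integral equation into a Dyson--Duhamel series and resumming, it argues by density: first it takes smooth data $v_0\in D(\A)$ and $g\in C^1([0,T],\E_0)$, sets $x(t):=\T_\phi(t)v_0+\int_0^t\T_\phi(t-s)g(s)\,\rd s$, observes that $x$ is then a \emph{classical} solution of $x'=(-\gamma+\A)x+(\gamma+\B)x+g$ and hence satisfies the same $\mS$-based integral equation as $v$, and concludes $x=v$ by a singular Gronwall inequality in $\E_\alpha$ using \eqref{E3o}. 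The general case is obtained by approximating $(v_0,g)$ by smooth data, passing to the limit in the $\T_\phi$-formula via \eqref{E3ooo}, and applying Gronwall once more. Your series-iteration route is valid and arguably more direct---it sidesteps the density step and the need to invoke classical solutions---but it requires controlling the remainder after $N$ iterations (equivalently, absolute convergence of the series in $\E_\alpha$), which rests on the same weakly-singular Gronwall estimate the paper uses. One caution: your opening heuristic, that $v$ ``is a mild solution of $\dot v=(\A+\B)v+g$, so uniqueness gives the claimed formula,'' is circular as stated, since the $\T_\phi$-formula \emph{is} the definition of mild solution for $\A+\B$; the substantive content is precisely that the $\mS$-based and $\T_\phi$-based integral formulations coincide, which your subsequent iteration argument (or the paper's density argument) is what actually delivers.
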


\begin{proof}
It follows from \cite[Theorem~2.8]{WalkerIUMJ} that $\A+\B$ with domain $D(\A+\B)=D(\A)$ (in particular, we have $D(\A)\hookrightarrow \E_\alpha$) generates a strongly continuous semigroup $(\T_\phi(t))_{t\ge 0}$ on $\E_0$ and on $\E_\alpha$ satisfying~\eqref{E3ooo}.
The remainder of the proof is now the same as in \cite[Proposition~4.17]{WebbBook} except that $\B$ herein is no  bounded perturbation on $\E_0$. We thus include the details here. 

{{\bf (i)}} Let first $g\in C^1([0,T],\E_0)$ and  $v_0\in D(\A)$. Define
	\begin{align}\label{74}
	x(t):=\T_\phi(t)v_0+\int_0^t \T_\phi(t-s) g(s)\, \rd s\,,\quad t\in [0,T]\,,
	\end{align}
and note that $x\in C^1([0,T],\E_0)\cap C([0,T],D(\A))$ is the unique solution to
$$
x'=(-\gamma+\A) x+(\gamma+\B) x+ g(t)\,,\quad t\in [0,T]\,,\qquad x(0)=v_0\,.
$$
Hence,
\begin{align*}
	x(t)=e^{-\gamma t}\mS(t)v_0+\int_0^t e^{-\gamma (t-s)}\mS(t-s) \big((\gamma+\B) x(s)+g(s)\big)\, \rd s\,,\quad t\in [0,T]\,,
\end{align*}
so that, using~\eqref{E3o},
\begin{align*}
	\|x(t)-v(t)\|_{\E_\alpha}&\le \int_0^t e^{-\gamma (t-s)}\,\|\mS(t-s)\|_{\ml( \E_0,\E_\alpha)} \, \|\gamma+\B\|_{\ml(\E_\alpha,\E_0)} \,  \|x(s)-v(s)\|_{\E_\alpha}\, \rd s \\
	&\le C_\alpha\, \|\gamma+\B\|_{\ml(\E_\alpha,\E_0)}  \int_0^t (t-s)^{-\alpha} \, e^{(\varsigma_\alpha-\gamma) (t-s)}\,   \|x(s)-v(s)\|_{\E_\alpha}\, \rd s
\end{align*}
for $t\in [0,T]$. Gronwall's inequality \cite[II.Theorem 3.3.1]{LQPP} implies that indeed $v=x$ on $[0,T]$.

{{\bf (ii)}} Consider now $g_k\in C^1([0,T],\E_0)$ and  $v_{0,k}\in D(\A)$ with 
$$
g_k\to g\ \text{ in }\ C([0,T],\E_0)\,,\qquad v_{0,k}\to v_0\ \text{ in }\ \E_0
$$
as $k\to\infty$. 
Define $x$ again by \eqref{74} and accordingly
\begin{align*}
	x_k(t):=\T_\phi(t)v_{0,k}+\int_0^t \T_\phi(t-s) g_k(s)\, \rd s\,,\quad t\in [0,T]\,.
\end{align*}
Taking $\beta\in\{0,\alpha\}$ and invoking~\eqref{E3ooo} we obtain
\begin{align*}
	t^{\beta}\, \|x(t)-x_k(t)\|_{\E_\beta}& \le t^{\beta}\,\|\T_\phi(t)\|_{\ml( \E_0,\E_\beta)} \, \|v_0-v_{0,k}\|_{\E_0}\\
	&\quad + t^{\beta}\int_0^t \|\T_\phi(t-s)\|_{\ml( \E_0,\E_\beta)} \,  \|g(s)-g_k(s)\|_{\E_0}\, \rd s\\
	&\le  c(T) \, \|v_0-v_{0,k}\|_{\E_0} +  c(T) \|g-g_k\|_{C([0,T],\E_0)}
\end{align*}
for $t\in [0,T]$. Consequently, as $k\to \infty$,
\begin{equation}\label{75}
	t^\beta\,\|x(t)-x_k(t)\|_{\E_\beta}\to 0\ \text{ uniformly with respect to $t\in [0,T]$}\,,\qquad \beta\in\{0,\alpha\}\,.
\end{equation}
Since
\begin{align*}
	x_k(t)=e^{-\gamma t}\mS(t)v_{0,k}+\int_0^t e^{-\gamma (t-s)}\mS(t-s) \,\big((\gamma+\B) x_k(s)+g_k(s)\big)\, \rd s\,,\quad t\in [0,T]\,,
\end{align*}
according to {{\bf (i)}}, it thus follows from~\eqref{75} (using~\eqref{E3o}) that
\begin{align}\label{78}
	x(t)=e^{-\gamma t}\mS(t)v_{0}+\int_0^t e^{-\gamma (t-s)}\mS(t-s) \,\big((\gamma+\B) x(s)+g(s)\big)\, \rd s\,,\quad t\in [0,T]\,.
\end{align}
Consequently, ~\eqref{78} and~\eqref{E3o} entail 
\begin{align*}
	\|x(t)-v(t)\|_{\E_\alpha}& \le  \int_0^t  e^{-\gamma(t-s)}\,\|\mS(t-s)\|_{\ml( \E_0,\E_\alpha)} \,  \|\gamma+\B\|_{\ml(\E_\alpha,\E_0)}\,  \|x(s)-v(s)\|_{\E_\alpha}\, \rd s\\
	&\le  c(T)\,\int_0^t (t-s)^{-\alpha} \,  \|x(s)-v(s)\|_{\E_\alpha}\, \rd s
\end{align*}
for  $t\in [0,T]$; that is $v=x$ on $[0,T]$ by Gronwall's inequality.
\end{proof}

Of course, \Cref{LemmaH} is not restricted to the particular choice of $\partial F(\phi)$ for the perturbation~$\B\in \ml(\E_\alpha,\E_0)$.

\begin{rem}
	It is worth emphasizing that the strongly continuous semigroup $(\T_\phi(t))_{t\ge 0}$ is the solution operator associated with the linearization (see~\eqref{Plinear}) of ~\eqref{P} given by
	\begin{align*}
		\partial_t v+ \partial_av \, &=     A(a)v +\partial F(\phi) v\,, \qquad t>0\, ,\quad a\in (0,a_m)\, ,\\ 
		v(t,0)&=\mathcal{M}_\phi(v(t)) \,, \qquad t>0\, , \\
		v(0,a)&=  v_0(a)\,, \qquad a\in (0,a_m)\,,
	\end{align*} 
that is, $v(t)=\T_\phi(t)v_0$, $t\ge 0$, defines the unique mild solution for each $v_0\in\E_0$. See~\cite{WalkerIUMJ}.
\end{rem}

The assumption $\omega_\alpha(\phi)>0$ in \eqref{E3ooo} corresponds to an exponential decay of the semigroup $(\T_\phi(t))_{t\ge 0}$ and plays an important role in the subsequent stability analysis. We thus add some comments on this issue.

\begin{rem}\label{R14}
{\bf (a)} Assuming $\omega_\alpha(\phi)>0$ in \eqref{E3ooo} is equivalent to assuming that the growth bound of the semigroup $(\T_\phi(t))_{t\ge 0}$ on $\E_\alpha$ is negative.

\begin{proof}
Let $\omega(\T_\phi)$ be the growth bound of the semigroup $(\T_\phi(t))_{t\ge 0}$ on $\E_\alpha$. Clearly, \eqref{E3ooo} yields that $\omega(\T_\phi)\le-\omega_\alpha(\phi)$. Assume now that $\omega(\T_\phi)<0$. Then, for $\omega(\T_\phi)< -(\omega+\ve)<-\omega<0$ there is $N\ge 1$ such that
	\begin{equation*}
	\|\T_\phi(t)\|_{\ml(\E_\alpha)}\le N\,  e^{ -(\omega+\ve) t}\,,\quad t\ge  0\,.
\end{equation*}
This along with \eqref{E3ooo} implies
\begin{equation*}
	\|\T_\phi(t)\|_{\ml(\E_0,\E_\alpha)}\le \|\T_\phi(t-1)\|_{\ml(\E_\alpha)}\, \|\T_\phi(1)\|_{\ml(\E_0,\E_\alpha)} \le N\,  e^{ -(\omega+\ve) (t-1)}\, N_\alpha\, e^{\vert \omega_\alpha\vert } 
	\le N_0\,  e^{ -\omega t}\, t^{-\alpha}
\end{equation*}
for $t\ge 1$ and some $N_0\ge 1$,
while such an estimate is obviously implied by \eqref{E3ooo}  for $t\in (0,1)$. Hence, $\omega_\alpha(\phi)$ can be chosen positive in~\eqref{E3ooo} if $\omega(\T_\phi)<0$.
\end{proof}

{\bf (b)} If the semigroup $(\T_\phi(t))_{t\ge 0}$  on $\E_\alpha$ is eventually compact, then the growth bound of the semigroup $(\T_\phi(t))_{t\ge 0}$ on $\E_\alpha$ coincides with the spectral bound of its generator $\A+\B$, see \cite[IV.Corollary 3.12]{EngelNagel}. In some cases (e.g. for the trivial equilibrium $\phi=0$, or if $\B=\partial F(\phi)=0$ so that $\T_\phi=\mS$) one can indeed show the eventual compactness of $(\T_\phi(t))_{t\ge 0}$ \cite{WalkerIUMJ} so that $\omega_\alpha(\phi)>0$ in \eqref{E3ooo} is equivalent to a negative spectral bound of the generator.
\end{rem}

We will get back to this point in \Cref{Sec:Exmp}.


\subsection*{The Nonlinear Part} We next focus on the nonlinear part $W_{0 ,0}^{\gamma,h}$ for which we shall derive an estimate.


\begin{lem}\label{LemmaF}
	Let $h\in C\big([0,T],E_0\big)$ and $\gamma\in\R$. Then $W_{0,0}^{\gamma,h}\in C([0,T],\E_\alpha)$ and there are constants $\mu=\mu(\alpha,b,\phi)>0$ and $c_0=c_0(\alpha,b,\phi)>0$ (both independent of $\gamma$ and $h$) such that
	\begin{equation}\label{LemmaFest}
		\|W_{0,0}^{\gamma,h}(t,\cdot)\|_{\E_\alpha}\le c_0\int_0^t e^{(\varpi+\mu-\gamma)(t-a)}\,(t-a)^{-\alpha}\,\|h(a)\|_{E_0}\,\rd a\,,\quad t\in [0,T]\,.
	\end{equation}
\end{lem}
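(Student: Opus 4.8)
The plan is to derive the estimate directly from the definition of $W_{0,0}^{\gamma,h}$ in~\eqref{4998}, splitting into the two regimes $t\le a$ and $t>a$ and using the estimate on $B_{0,0}^{\gamma,h}$ already provided by \Cref{LemmaFF}. Note first that when $z=0$ and $f=0$, the representation~\eqref{4998} collapses to
\begin{equation*}
	W_{0,0}^{\gamma,h}(t,a)=\begin{cases} 0\,, & t\le a\,,\\ \Pi_\gamma(a,0)\,B_{0,0}^{\gamma,h}(t-a)\,, & t>a\,,\end{cases}
\end{equation*}
since $G_0^\gamma\equiv 0$. The continuity $W_{0,0}^{\gamma,h}\in C([0,T],\E_\alpha)$ follows from the continuity of $t\mapsto B_{0,0}^{\gamma,h}(t)$ in $E_0$ (from \Cref{L1}) together with the smoothing and strong continuity of the evolution operator $\Pi$ on the interpolation scale, exactly as in the argument leading to~\eqref{o9}; I would just remark this rather than belabor it.

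For the norm bound, I would estimate in $\E_\alpha=L_1(J,E_\alpha)$ by integrating over $a$:
\begin{equation*}
	\|W_{0,0}^{\gamma,h}(t,\cdot)\|_{\E_\alpha}=\int_0^{\min\{t,a_m\}}\|\Pi_\gamma(a,0)\,B_{0,0}^{\gamma,h}(t-a)\|_{E_\alpha}\,\rd a\,.
\end{equation*}
Using~\eqref{4997} and the smoothing estimate~\eqref{EO}, we have $\|\Pi_\gamma(a,0)\|_{\ml(E_0,E_\alpha)}\le M_\alpha\,a^{-\alpha}e^{(\varpi-\gamma)a}$, so that
\begin{equation*}
	\|W_{0,0}^{\gamma,h}(t,\cdot)\|_{\E_\alpha}\le M_\alpha\int_0^t a^{-\alpha}e^{(\varpi-\gamma)a}\,\|B_{0,0}^{\gamma,h}(t-a)\|_{E_0}\,\rd a\,.
\end{equation*}
Now I would insert the bound~\eqref{70} from \Cref{LemmaFF} for $\|B_{0,0}^{\gamma,h}(t-a)\|_{E_0}$, which contributes a term with $\|h(t-a)\|_{E_0}$ and a convolution term with kernel $(t-a-s)^{-\alpha}e^{(\mu+\varpi-\gamma)(t-a-s)}$ against $\|h(s)\|_{E_0}$. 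After a change of variables this produces an iterated convolution of the two singular kernels $a^{-\alpha}e^{(\varpi-\gamma)a}$ and $\cdot^{-\alpha}e^{(\mu+\varpi-\gamma)\cdot}$ against $\|h\|_{E_0}$, plus the single-kernel term. The single-kernel term is already of the desired form (possibly after enlarging $\mu$). For the double-convolution term one uses that the convolution of $a^{-\alpha}$ with $a^{-\alpha}$ on $[0,t]$ is bounded by $c\,t^{1-2\alpha}$ when $\alpha<1/2$, or more robustly one absorbs one factor of the singularity and a factor of the exponential: since $2\alpha$ may exceed $1$, the cleanest route is to bound $\int_0^{\tau}(\tau-s)^{-\alpha}s^{-\alpha}\,\rd s\le c\,\tau^{-\alpha}$ for $\tau$ in a bounded interval (valid because $\int_0^{\tau}(\tau-s)^{-\alpha}s^{-\alpha}\rd s=B(1-\alpha,1-\alpha)\,\tau^{1-2\alpha}$ and $\tau^{1-\alpha}$ is bounded on $[0,T]$, giving $c(T)\tau^{-\alpha}$). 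Collecting both contributions and relabelling the constant $\mu$ and $c_0$ yields~\eqref{LemmaFest}.

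The main obstacle I anticipate is purely a bookkeeping one: handling the product of two $(\cdot)^{-\alpha}$ singularities when $\alpha\ge 1/2$, so that one cannot naively say the convolution is bounded. The remedy is to exploit that we work on a \emph{finite} time horizon $[0,T]$ (even when $a_m=\infty$, the statement is for $t\in[0,T]$), so that extra positive powers of the time variable are harmless and can be swallowed into $c_0=c_0(\alpha,b,\phi)$ and, via $e^{\mu\,\cdot}\ge 1$, into an enlarged exponential rate $\mu$; the rate $\varpi-\gamma$ is kept explicit so the estimate remains uniform in $\gamma$, which is exactly what is needed later in Section~\ref{Sec:6}. A second minor point is to make sure $\mu$ here can be taken to be the \emph{same} $\mu$ as in \Cref{LemmaFF} (this is stated in the lemma), which is achieved simply by taking the maximum of the two constants produced.
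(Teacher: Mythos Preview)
Your approach is essentially the same as the paper's: reduce to the formula $W_{0,0}^{\gamma,h}(t,a)=\Pi_\gamma(a,0)B_{0,0}^{\gamma,h}(t-a)$ for $t>a$, apply the smoothing estimate~\eqref{EO}, insert the bound~\eqref{70}, and handle the resulting double convolution via the Beta-function identity. The paper carries out exactly these steps.

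There is, however, one point where your bookkeeping discussion is off and could lead you astray. The lemma asserts that $c_0=c_0(\alpha,b,\phi)$ is \emph{independent of $T$}, and this is essential for the application in Section~\ref{Sec:6}, where the estimate is applied on $[0,t_1)$ with $t_1$ a priori unbounded (the whole point is to conclude $t_1=\infty$). Your suggestion to bound $\tau^{1-\alpha}$ by a constant on $[0,T]$, producing $c(T)\tau^{-\alpha}$, would give a $T$-dependent constant and is therefore not admissible. The correct device---which you do mention in your ``obstacle'' paragraph but somewhat mix up with the finite-horizon argument---is the \emph{uniform} bound
\[
\tau^{1-2\alpha}=\tau^{-\alpha}\cdot\tau^{1-\alpha}\le c_\mu\, e^{\mu\tau}\,\tau^{-\alpha}\,,\qquad \tau>0\,,
\]
valid for any fixed $\mu>0$ with $c_\mu$ depending only on $\mu$ and $\alpha$. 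This is precisely what the paper uses (so the final exponent becomes $2\mu+\varpi-\gamma$, then $\mu$ is relabeled). The finiteness of $T$ plays no role in the estimate; drop that justification and rely solely on the exponential absorption.

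A minor remark: the $\mu$ in the statement of this lemma is not asserted to coincide with the $\mu$ of \Cref{LemmaFF}; the paper in fact ends up with $2\mu$ and relabels, so you need not worry about matching them.
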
 

\begin{proof}
To prove continuity let $0\le s\le t\le T$. Then, by~\eqref{4998},~\eqref{4997} and~\eqref{EO},
\begin{align*}
\|W_{0,0}^{\gamma,h}(t,\cdot)-W_{0,0}^{\gamma,h}(s,\cdot)\|_{\E_\alpha} &\le \int_0^s \|\Pi_\gamma(a,0)\|_{\ml(E_0,E_\alpha)}\,\|B_{0,0}^{\gamma,h}(t-a)-B_{0,0}^{\gamma,h}(s-a)\|_{E_0}\,\rd a\\
&\quad + \int_s^t \|\Pi_\gamma(a,0)\|_{\ml(E_0,E_\alpha)}\,\|B_{0,0}^{\gamma,h}(t-a)\|_{E_0}\,\rd a\\
 &\le M_\alpha\int_0^s e^{(\varpi-\gamma) a} a^{-\alpha}\,\|B_{0,0}^{\gamma,h}(t-a)-B_{0,0}^{\gamma,h}(s-a)\|_{E_0}\,\rd a\\
&\quad +M_\alpha\int_s^t e^{(\varpi-\gamma) a} a^{-\alpha}\,\|B_{0,0}^{\gamma,h}(t-a)\|_{E_0}\,\rd a\,.
\end{align*}
Since $B_{0,0}^{\gamma,h}\in C([0,T],E_0)$ according to \Cref{L1}, we infer that indeed $W_{0,0}^{\gamma,h}\in C([0,T],\E_\alpha)$. 

As for~\eqref{LemmaFest} we use the corresponding estimate on $B_{0,0}^{\gamma,h}$ established in~\eqref{70}. More precisely, we use~\eqref{70} along with~\eqref{EO} and~\eqref{4997} in the definition~\eqref{4998} of $W_{0,0}^{\gamma,h}(t)$ to derive 
\begin{align*}
	\|W_{0,0}^{\gamma,h}(t,\cdot)\|_{\E_\alpha}  & \le \int_0^{t\wedge a_m}   \|\Pi_\gamma(a,0)\|_{\ml(E_0,E_\alpha)}\, \|B_{0,0}^{\gamma,h}(t-a)\|_{E_0}\, \rd a\,\\
	& \le M_\alpha \int_0^{t}   (t-a)^{-\alpha}  \,  e^{(\varpi-\gamma) (t-a)}\, \|B_{0,0}^{\gamma,h}(a)\|_{E_0}\, \rd a\,\\
	& \le M_\alpha\, c_1 \int_0^{t}   (t-a)^{-\alpha}  \,  e^{(\varpi-\gamma) (t-a)}\, \int_0^a (a-s )^{-\alpha}  \,  e^{(\mu+\varpi-\gamma) (a-s )}\, \|h(s )\|_{E_0}\, \rd s \, \rd a\,\\
	&\quad + M_\alpha\, c_1 \int_0^{t}   (t-a)^{-\alpha}  \,  e^{(\varpi-\gamma) (t-a)}\, \|h(a)\|_{E_0}\, \rd a\\
	& \le M_\alpha\, c_1 \int_0^{t}     e^{(\mu+\varpi-\gamma) (t-s )}\, \|h(s )\|_{E_0}\, \int_s ^t (t-a)^{-\alpha}  \, (a-s )^{-\alpha}  \, \rd a\, \rd s \,\\
	&\quad + M_\alpha\, c_1 \int_0^{t}   (t-a)^{-\alpha}  \,  e^{(\varpi-\gamma) (t-a)}\, \|h(a)\|_{E_0}\, \rd a
\end{align*}
for $t\in [0,T]$. Now, noticing
$$
\int_s ^t (t-a)^{-\alpha}  \, (a-s )^{-\alpha} \, \rd a =\mathsf{B}(1-\alpha,1-\alpha)\, (t-s )^{1-2\alpha}\le c_\mu e^{\mu(t-s )}\,(t-s )^{-\alpha}\,, \quad 0\le s  <t\,,
$$
with Beta function $\mathsf{B}$, we conclude
\begin{align*}
	\|W_{0,0}^{\gamma,h}(t,\cdot)\|_{\E_\alpha}  \le   c_2 \int_0^{t}   (t-a)^{-\alpha}  \,  e^{(2\mu+\varpi-\gamma)(t-a)}\,  \|h(a)\|_{E_0}\, \rd a \,,\quad t\in [0,T]\,,
\end{align*}
as claimed.
\end{proof}

Summarizing our findings regarding $W_{z ,f}^{\gamma,h}$ given in \eqref{1000} we get:

\begin{cor}\label{LemmaB19}
	Suppose~\eqref{99}. Then
	$W_{z ,f}^{\gamma,h}\in C([0,T],\E_0)$
	satisfies
	\begin{equation*}
		W_{z ,f}^{\gamma,h}(t,\cdot)=W_{z ,f}^{\gamma,0}(t,\cdot)+W_{0,0}^{\gamma,h}(t,\cdot)=e^{-\gamma t}\mS (t)z +\int_0^t e^{-\gamma (t-s)}\mS (t-s) f(s)\,\rd s +W_{0,0}^{\gamma,h}(t,\cdot) 
	\end{equation*}
for $t\in [0,T]$, where $(\mS(t))_{t\ge 0}$ is the strongly continuous semigroup on $\E_0$ (and on $\E_\alpha$) introduced in \Cref{LemmaBB}.
If $z \in\E_\alpha$, then $W_{z ,f}^{\gamma,h}\in C([0,T],\E_\alpha)$.
\end{cor}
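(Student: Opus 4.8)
The plan is to assemble \Cref{LemmaB19} purely by bookkeeping from the decomposition results already in hand, since every ingredient has been established in the preceding lemmas and propositions. First I would invoke the linear superposition~\eqref{W-1}, which splits $W_{z,f}^{\gamma,h}=W_{z,0}^{\gamma,0}+W_{0,f}^{\gamma,0}+W_{0,0}^{\gamma,h}$; grouping the first two terms as $W_{z,f}^{\gamma,0}$ and appealing to~\eqref{W2} from \Cref{LemmaBB} immediately rewrites $W_{z,f}^{\gamma,0}(t,\cdot)=e^{-\gamma t}\mS(t)z+\int_0^t e^{-\gamma(t-s)}\mS(t-s)f(s)\,\rd s$. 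This already yields the displayed identity of the corollary, so the only substantive content left is the two regularity claims: $W_{z,f}^{\gamma,h}\in C([0,T],\E_0)$ in general, and $W_{z,f}^{\gamma,h}\in C([0,T],\E_\alpha)$ when $z\in\E_\alpha$.

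For the continuity statements I would treat the three summands separately. The term $W_{0,0}^{\gamma,h}$ is handled outright by \Cref{LemmaF}, which gives $W_{0,0}^{\gamma,h}\in C([0,T],\E_\alpha)\subset C([0,T],\E_0)$ regardless of $z$. For $W_{0,f}^{\gamma,0}=\int_0^t e^{-\gamma(t-s)}\mS(t-s)f(s)\,\rd s$, continuity into $\E_\alpha$ follows from the smoothing estimate~\eqref{E3o} together with $f\in C([0,T],\E_0)$: the singularity $(t-s)^{-\alpha}$ is integrable, so a standard argument (splitting the integral near $s=t$ and using dominated convergence on the rest) shows the convolution is continuous with values in $\E_\alpha$; this is essentially the same computation already used for $G_v$ in~\eqref{o9}. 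For the remaining term $W_{z,0}^{\gamma,0}(t,\cdot)=e^{-\gamma t}\mS(t)z$ from~\eqref{LemmaG}, strong continuity of $(\mS(t))_{t\ge 0}$ on $\E_0$ (and, when $z\in\E_\alpha$, on $\E_\alpha$) as recorded in \Cref{LemmaBB} gives continuity into $\E_0$ always and into $\E_\alpha$ precisely under the hypothesis $z\in\E_\alpha$. Adding the three pieces gives $W_{z,f}^{\gamma,h}\in C([0,T],\E_0)$ unconditionally, and $W_{z,f}^{\gamma,h}\in C([0,T],\E_\alpha)$ when $z\in\E_\alpha$.

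I do not anticipate a genuine obstacle here: the corollary is a summary, and the hard analytic work (well-posedness of $B_{z,f}^{\gamma,h}$ in \Cref{L1}, the semigroup property and smoothing~\eqref{E3o} in \Cref{LemmaBB}, the decay estimates in \Cref{LemmaFF} and \Cref{LemmaF}) is already done. The one point requiring a modicum of care is the continuity of the convolution term $\int_0^t e^{-\gamma(t-s)}\mS(t-s)f(s)\,\rd s$ at $t=0$ and across $[0,T]$ with values in $\E_\alpha$; but since $\mS$ restricts to a strongly continuous semigroup on $\E_\alpha$ and $f$ is $\E_0$-valued continuous, the weakly singular kernel $(t-s)^{-\alpha}$ is harmless and the argument is the routine one. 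Thus the proof is essentially a two-line citation chain—$\eqref{W-1}$, then $\eqref{W2}$, then \Cref{LemmaF} and \Cref{LemmaBB} for the regularity—and I would write it as such.
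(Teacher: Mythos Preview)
Your proposal is correct and follows essentially the same approach as the paper: the paper's proof is literally a one-line citation of \eqref{W-1}, \Cref{LemmaBB}, and \Cref{LemmaF}, which is precisely the chain you describe. Your additional discussion of the continuity of the convolution term is more detailed than what the paper writes, but it is implicitly covered by the strong continuity and smoothing of $(\mS(t))_{t\ge 0}$ recorded in \Cref{LemmaBB}.
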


\begin{proof}
	This now follows from \Cref{LemmaBB}, \Cref{LemmaF}, and~\eqref{W-1}.  
\end{proof}

Let us also state the following identity for $W_{z ,f}^{\gamma,h}$ that we shall use later on.

\begin{lem}\label{LemmaB}
	Suppose~\eqref{99} and set
	$W:=W_{z ,f}^{0,h}$ for abbreviation. Then
	 $W_{z ,f}^{0,h}=W_{z ,\gamma W+f}^{\gamma,h}$ for every $\gamma\in\R$.
\end{lem}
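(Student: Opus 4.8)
The goal is to show the two functions $W_{z,f}^{0,h}$ and $W_{z,\gamma W+f}^{\gamma,h}$ coincide, where $W=W_{z,f}^{0,h}$ on the right‑hand side. The cleanest route is to verify directly that the right‑hand side $\widetilde W:=W_{z,\gamma W+f}^{\gamma,h}$ satisfies \emph{the same} system of integral equations~\eqref{1000} (with parameter $0$ and data $f$, $h$) that \emph{defines} $W_{z,f}^{0,h}$, and then invoke the uniqueness part of \Cref{L1} (for the age‑boundary component $B$) together with the fact that $W$ is then determined by $B$ via~\eqref{4998}. So the plan is: first I would write out the defining relations~\eqref{4998}--\eqref{5000} for $\widetilde W$ with the substitution $(\gamma, f)\rightsquigarrow(\gamma,\gamma W+f)$, and then show the $e^{-\gamma(\cdot)}$ factors hidden in $\Pi_\gamma$ and $G_f^\gamma$ recombine with the extra $\gamma W$ term to reproduce exactly the $\gamma=0$ equations for $W$.

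\textbf{Key steps.} The heart of the matter is the identity for the $G$‑term. Using~\eqref{4999} and $\Pi_\gamma(a,\sigma)=e^{-\gamma(a-\sigma)}\Pi(a,\sigma)$ from~\eqref{4997}, one has
$$
G_{\gamma W+f}^{\gamma}(t,a)=\int_{(t-a)_+}^{t} e^{-\gamma(t-s)}\,\Pi(a,a-t+s)\,\big(\gamma W(s,a-t+s)+f(s,a-t+s)\big)\,\rd s\,.
$$
Now I would use the defining formula~\eqref{4998} for $W=W_{z,f}^{0,h}$ (with $\gamma=0$) to substitute for $W(s,a-t+s)$ inside this integral, splitting according to whether $s\le a-t+s$, i.e. $t\le a$, or not; in the first case $W(s,a-t+s)=\Pi(a-t+s,a-t)z(a-t)+G_f^0(s,a-t+s)$, in the second $W(s,a-t+s)=\Pi(a-t+s,0)B_{z,f}^{0,h}(\cdot)+G_f^0(s,a-t+s)$. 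Plugging this in and using the evolution property~\eqref{evol} to compose $\Pi(a,a-t+s)\Pi(a-t+s,\cdot)$, together with an integration‑by‑parts (or a direct Fubini) to handle the $\gamma\int e^{-\gamma(t-s)}\Pi(\cdots)\,\rd s$ factor, should collapse the expression: the telescoping $\tfrac{\rd}{\rd s}\big(e^{-\gamma(t-s)}\Pi_\gamma^{-1}\cdots\big)$‑type cancellation is what converts $\Pi_\gamma$ back into $\Pi$. Carrying this through on both branches of~\eqref{4998} yields that $\widetilde W$ obeys~\eqref{4998} with $\gamma=0$, data $f$, and an age‑boundary function $\widetilde B$ satisfying~\eqref{5000} with $\gamma=0$, data $f$, $h$. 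Uniqueness (\Cref{L1}) forces $\widetilde B=B_{z,f}^{0,h}$ and hence $\widetilde W=W_{z,f}^{0,h}$.

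\textbf{Main obstacle.} The bookkeeping in the integration along characteristics is the delicate part: one must track the argument shifts $a\mapsto a-t+s$ carefully, split every integral at the characteristic $s=t-a$, and apply Fubini to the double integrals that appear when $W$ is itself expanded via~\eqref{4998} (the nested $B$‑term and the nested $G$‑term each generate a double integral). The cleanest way to organize this — and the approach I would actually take in writing the proof — is to work at the level of the formal differential equation mentioned right after~\eqref{1000}: $W_{z,f}^{0,h}$ solves $\partial_tW+\partial_aW=A(a)W+f$ with $W(t,0)=\mathcal M_\phi(W(t,\cdot))+h(t)$, $W(0,\cdot)=z$, while $W_{z,\gamma W+f}^{\gamma,h}$ solves $\partial_t\widetilde W+\partial_a\widetilde W=(-\gamma+A(a))\widetilde W+\gamma W+f$ with the same boundary and initial data; subtracting, $\widetilde W-W$ solves the linear homogeneous problem $\partial_t(\widetilde W-W)+\partial_a(\widetilde W-W)=(-\gamma+A(a))(\widetilde W-W)$ with zero boundary and zero initial data, whence $\widetilde W=W$ by the uniqueness built into the construction of $W_{\cdot,\cdot}^{\gamma,\cdot}$. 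One still has to translate this formal argument into the rigorous mild‑solution identities, but \Cref{L1} and the superposition~\eqref{W-1} supply exactly the uniqueness and linearity needed to do so without new estimates.
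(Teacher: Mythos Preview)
Your proposal is correct and follows essentially the same route as the paper: substitute the defining formula for $W=W_{z,f}^{0,h}$ into $G_{\gamma W+f}^{\gamma}$, use the evolution property~\eqref{evol} together with the elementary identity $\int_0^\sigma \gamma e^{-\gamma s}\,\rd s=1-e^{-\gamma\sigma}$ to collapse the exponential factors in each of the two cases $t\le a$ and $t>a$, and then settle the age-boundary component by showing the two $B$'s satisfy the same Volterra equation and invoking the uniqueness of \Cref{L1}. The only cosmetic difference is the direction of the uniqueness step (the paper shows $B_{z,f}^{0,h}$ satisfies the $\gamma$-equation rather than the converse), and your formal PDE heuristic is exactly the guiding principle the paper alludes to just before the lemma.
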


 \begin{proof}
		Let $W=W_{z ,f}^{0,h}$ in the following, fix $\gamma\in\R$, and consider $t\in [0,T]$ and $a\in J$.\\ 
		
		{\bf (i)} If $a>t$, then, using \eqref{4998}, \eqref{4999}, \eqref{4997}, and the evolution property~\eqref{evol}, we derive
		\begin{align*}
			W_{z ,\gamma W+f}^{\gamma,h}(t,a)&= \Pi_\gamma (a,a-t) z  (a-t)\\
			&\quad  + \int_0^t \Pi_\gamma (a,a-t+s) \big( \gamma \, W (s,a-t+s) +f(s,a-t+s)  \big) \, \rd s \\
			&= \Pi_\gamma (a,a-t) z  (a-t) + \gamma\int_0^t \Pi_\gamma (a,a-t+s) \Pi (a-t+s,a-t)z  (a-t)  \, \rd s \\
			&\quad  +\gamma\int_0^t  \Pi_\gamma (a,a-t+s)  \int_0^s \Pi (a-t+s,a-t+\sigma)f(\sigma,a-t+\sigma)\, \rd \sigma  \rd s \\
			&\quad  + \int_0^t  \Pi_\gamma (a,a-t+s) f(s,a-t+s) \, \rd s \\
			&= \Pi_\gamma (a,a-t) z  (a-t) + \Pi (a,a-t)z  (a-t) \left(\int_0^t \gamma e^{-\gamma (t-s)} \, \rd s\right) \\
			&\quad+\int_0^t  \Pi (a,a-t+\sigma)f(\sigma,a-t+\sigma)\,  \left(\int_\sigma^t \gamma   e^{-\gamma (t-s)} \rd s\right)   \rd \sigma \\
			&\quad + \int_0^t  \Pi_\gamma (a,a-t+s) f(s,a-t+s) \, \rd s\,.
		\end{align*}
		Since
		\begin{align}\label{99s}
			\int_0^\sigma \gamma   e^{-\gamma s} \rd s=1- e^{-\gamma \sigma}\,,\quad \sigma\ge 0\,,
		\end{align}
		we deduce that indeed
		\begin{align}\label{99r}
			W_{z ,\gamma W+f}^{\gamma,h}(t,a)&=
			\Pi (a,a-t) z  (a-t) + \int_0^t  \Pi(a,a-t+s) f(s,a-t+s) \, \rd s =W_{z ,f}^{0,h}(t,a)
		\end{align}
		for $a\ge t$.\\
		
		{\bf (ii)} Next, consider the case $t > a$ so that \eqref{4998}  yields
		\begin{align}\label{99t}
			W_{z ,\gamma W+f}^{\gamma,h}(t,a)=\, &\Pi_\gamma(a,0) B_{z ,\gamma W+f}^{\gamma,h}(t-a) +G_{ \gamma W  + f}^\gamma(t,a)\,.
		\end{align}
		For the second term on the right-hand side of \eqref{99t}, given in~\eqref{4999}, we compute, using again and~\eqref{4998} for $W=W_{z ,f}^{0,h}$ and~\eqref{evol}, 
		\begin{align*}
			G_{ \gamma W  + f}^\gamma(t,a)&= \int_{t-a}^t \Pi_\gamma (a,a-t+s)  \big(  \gamma W (s,a-t+s) + f(s,a-t+s) \big) \, \rd s \nonumber\\
			&= \gamma \int_{t-a}^t \Pi_\gamma (a,a-t+s)  \Pi (a-t+s,0) B_{z ,f}^{0,h} (t-a) \, \rd s \nonumber\\
			&\quad +\gamma \int_{t-a}^t \Pi_\gamma (a,a-t+s)  \int_{t-a}^s \Pi(a-t+s,a-t+\sigma) f(\sigma,a-t+\sigma) \, \rd \sigma \, \rd s \nonumber\\
			&\quad + \int_{t-a}^t \Pi_\gamma (a,a-t+s)  f(s,a-t+s) \, \rd s\nonumber
		\end{align*} 
			\begin{align*}
				\hphantom{G_{ \gamma W  + f}^\gamma(t,a)}& = \Pi (a,0) B_{z ,f}^{0,h} (t-a) \left(\int_{t-a}^t \gamma e^{-\gamma(t-s)} \, \rd s\right) \nonumber\\
			&\quad + \int_{t-a}^t \Pi (a,a-t+\sigma) f(\sigma,a-t+\sigma) \left(\int_\sigma^t \gamma e^{-\gamma (t-s)} \, \rd s \right)\, \rd \sigma \nonumber\\
			&\quad + \int_{t-a}^t \Pi_\gamma (a,a-t+s)  f(s,a-t+s) \, \rd s  \nonumber\\
			&= -\Pi_\gamma(a,0) B_{z ,f}^{0,h} (t-a)+ \Pi(a,0) B_{z ,f}^{0,h} (t-a)\\
			&\quad +\int_{t-a}^{t}\Pi(a,a-t+s)\, f(s,a-t+s)\,\rd s
		\end{align*}
		for $t > a$, where we again applied \eqref{99s} for the last equality. Hence, from \eqref{4998},
		\begin{align}
			G_{ \gamma W  + f}^\gamma(t,a)=
			-\Pi_\gamma(a,0) B_{z ,f}^{0,h} (t-a) +W(t,a)\,,\quad t > a\, .\label{103} 
		\end{align}
Recalling~\eqref{99t} we obtain
		\begin{align}\label{101}
			W_{z ,\gamma W+f}^{\gamma,h}(t,a)&= \Pi_\gamma(a,0) \Big(B_{z ,\gamma W+f}^{\gamma,h}(t-a)-B_{z ,f}^{0,h} (t-a)\Big) +W(t,a)\,,\quad t > a\,,
		\end{align}
		where $W=W_{z ,f}^{0,h}$. We now claim that $	B_{z ,f}^{0,h}=B_{z ,\gamma W+f}^{\gamma,h}$. To this end, we use  identity~\eqref{key} for $B_{z ,f}^{0,h}(t)$ and replace therein $W=W_{z ,f}^{0,h}$ by formula~\eqref{103} for $t > a$ respectively by the formula
		\begin{align*}
			W(t,a)=W_{z ,\gamma W+f}^{\gamma,h}(t,a)=
			\Pi_\gamma (a,a-t) z  (a-t) + G_{ \gamma W  + f}^\gamma(t,a)
		\end{align*}
		for $t < a$ stemming from~\eqref{99r} and~\eqref{4998}. This yields
	\begin{equation*}
			\begin{split}
				B_{z ,f}^{0,h}(t)\,  =\, & \mathcal{M}_\phi\big(W(t,\cdot)\big) +h(t)\\
				=\, & \int_0^t  \mathfrak{b}_\phi(a)\, W(t,a)\, \rd
				a
				+\int_t^{a_m}  \mathfrak{b}_\phi(a)\, W(t,a)\, \rd a	+ h(t)\\
				=\, & \int_0^t  \mathfrak{b}_\phi(a)\, \Pi_\gamma(a,0)B_{z ,f}^{0,h}(t-a)\, \rd
				a  +\int_t^{a_m}  \mathfrak{b}_\phi(a)\, \Pi_\gamma(a,a-t) z (a-t)\, \rd a\\
				&+\mathcal{M}_\phi\big(G_{ \gamma W  + f}^\gamma(t,\cdot)\big)	+ h(t)
			\end{split}
		\end{equation*}
		for $t\in [0,T]$. That is, $B_{z ,f}^{0,h}$ satisfies the same equation as $B_{z ,\gamma W+f}^{\gamma,h}$. Consequently, we indeed have
			$B_{z ,f}^{0,h}=B_{z ,\gamma W+f}^{\gamma,h}$
		by \Cref{L1}. Therefore, \eqref{101} implies
		\begin{align*}
			W_{z ,\gamma W+f}^{\gamma,h}(t,a)=W(t,a)\,,\quad t > a\,.
		\end{align*}
		This yields the assertion.
\end{proof}


\section{Stability Estimates: Proof of \Cref{T2}}\label{Sec:6}


We shall now continue our investigation from \Cref{Sec:DeriveLin} and complete the proof of \Cref{T2}. Based on the findings from the previous section we first derive another representation of \mbox{$w=u(\cdot;u_0)-\phi$} from \Cref{intermediate} which is key for the stability estimates.

\begin{prop}\label{representation}
	Let $w=u(\cdot;u_0)-\phi$ and $w_0=u_0-\phi$ for $u_0\in \E_\alpha$ fixed.
	Then, using the notation of \Cref{intermediate}, the function $w\in C(I(u_0),\E_\alpha)$ can be written as
\begin{equation}
	\begin{split}\label{20}
	w(t)&=\T_\phi (t)w_0 +\int_0^t \T_\phi (t-s) \left( \big(\gamma+\partial F(\phi)\big)W_{0,0}^{\gamma,h_w}(s,\cdot) +R_F(w(s))\right)\,\rd s + W_{0,0}^{\gamma,h_w}(t,\cdot)
\end{split}
\end{equation}
for $t\in I(u_0)$ and every $\gamma\in\R$, where the strongly continuous semigroup $(\T_\phi (t))_{t\ge 0}$ is introduced in \Cref{LemmaH} and $W_{0,0}^{\gamma,h_w}$ in~\eqref{4998}.
\end{prop}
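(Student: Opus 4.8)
The plan is to recognize equation~\eqref{17} from \Cref{intermediate} as an instance of the linear problem~\eqref{1000} with a self-referential right-hand side, and then feed this into the semigroup representation provided by \Cref{LemmaH}. Concretely, comparing~\eqref{17A}--\eqref{17B} with~\eqref{4998}--\eqref{5000}, the function $w$ equals $W_{z,f}^{0,h}$ with the identification $z=w_0$, $f=\partial F(\phi)w+R_F(w)$, and $h=h_w$ as defined in~\eqref{17Bb}; the parameter $\gamma$ is $0$ at this stage. (Here one uses that $B_w$ solves~\eqref{16}, i.e.~\eqref{key} with these data, which matches the Volterra equation~\eqref{5000} for $B_{w_0,\partial F(\phi)w+R_F(w)}^{0,h_w}$.) Thus
\begin{equation*}
	w=W_{w_0,\,\partial F(\phi)w+R_F(w)}^{0,h_w}\,.
\end{equation*}

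Next I would invoke \Cref{LemmaB} with this choice of data: it gives, for every $\gamma\in\R$,
\begin{equation*}
	w=W_{w_0,\,\partial F(\phi)w+R_F(w)}^{0,h_w}=W_{w_0,\,\gamma w+\partial F(\phi)w+R_F(w)}^{\gamma,h_w}\,.
\end{equation*}
Now apply \Cref{LemmaB19} to the right-hand side with $z=w_0\in\E_\alpha$, $f=\gamma w+\partial F(\phi)w+R_F(w)\in C([0,T],\E_0)$ (which is legitimate because $w\in C(I(u_0),\E_\alpha)$ and $\partial F(\phi)\in\ml(\E_\alpha,\E_0)$, while $R_F(w)\in\E_0$ by~\eqref{14G2}), and $h=h_w\in C([0,T],E_0)$. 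This produces
\begin{equation*}
	w(t)=e^{-\gamma t}\mS(t)w_0+\int_0^t e^{-\gamma(t-s)}\mS(t-s)\big(\gamma w(s)+\partial F(\phi)w(s)+R_F(w(s))\big)\,\rd s+W_{0,0}^{\gamma,h_w}(t,\cdot)
\end{equation*}
on any compact subinterval $[0,T]$ of $I(u_0)$. It remains to absorb the term $W_{0,0}^{\gamma,h_w}$ into the integrand so that the Duhamel formula for $(\mS(t))$ becomes one for $(\T_\phi(t))$. To do this I set $v:=w-W_{0,0}^{\gamma,h_w}$ and rewrite $\gamma w+\partial F(\phi)w=(\gamma+\B)v+(\gamma+\B)W_{0,0}^{\gamma,h_w}$ with $\B=\partial F(\phi)$; since $W_{0,0}^{\gamma,h_w}=W_{0,f}^{\gamma,0}$-part of~\eqref{W-1} vanishes, one checks from \Cref{LemmaB19} (applied with $z=0$, $f=0$) that $W_{0,0}^{\gamma,h_w}$ itself does not contribute a $\mS$-convolution term, so $v$ satisfies exactly the hypothesis of the second part of \Cref{LemmaH}:
\begin{equation*}
	v(t)=e^{-\gamma t}\mS(t)w_0+\int_0^t e^{-\gamma(t-s)}\mS(t-s)\Big((\gamma+\B)v(s)+\big((\gamma+\B)W_{0,0}^{\gamma,h_w}(s,\cdot)+R_F(w(s))\big)\Big)\,\rd s\,,
\end{equation*}
with $g(s):=(\gamma+\B)W_{0,0}^{\gamma,h_w}(s,\cdot)+R_F(w(s))\in C([0,T],\E_0)$ — here one uses \Cref{LemmaF}, which gives $W_{0,0}^{\gamma,h_w}\in C([0,T],\E_\alpha)$ so that $\B W_{0,0}^{\gamma,h_w}$ makes sense in $\E_0$. \Cref{LemmaH} then yields $v(t)=\T_\phi(t)w_0+\int_0^t\T_\phi(t-s)g(s)\,\rd s$, and adding back $W_{0,0}^{\gamma,h_w}$ gives precisely~\eqref{20}. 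Since $T<t^+(u_0)$ was arbitrary, the identity holds on all of $I(u_0)$.

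The main obstacle I anticipate is the bookkeeping in the third step: verifying that the convolution term arising from \Cref{LemmaB19} is exactly $\int_0^t e^{-\gamma(t-s)}\mS(t-s)(\gamma w+\B w+R_F(w))(s)\,\rd s$ and that the extra $W_{0,0}^{\gamma,h_w}$ carries no hidden $\mS$-contribution, so that the substitution $v=w-W_{0,0}^{\gamma,h_w}$ cleanly matches the hypothesis of \Cref{LemmaH}. One must be careful that $g$ genuinely lies in $C([0,T],\E_0)$ (not merely $\E_\alpha\to\E_0$ pointwise), which is where the continuity statement in \Cref{LemmaF} is essential, and that all the manipulations are valid on the possibly finite maximal interval $I(u_0)$ by restricting to compact subintervals throughout.
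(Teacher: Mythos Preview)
Your proof is correct and follows essentially the same route as the paper: identify $w=W_{w_0,\partial F(\phi)w+R_F(w)}^{0,h_w}$ from \Cref{intermediate}, apply \Cref{LemmaB} to introduce $\gamma$, use \Cref{LemmaB19} to split off the $\mS$-Duhamel part plus $W_{0,0}^{\gamma,h_w}$, and then set $v=w-W_{0,0}^{\gamma,h_w}$ so that \Cref{LemmaH} converts the $\mS$-formula into a $\T_\phi$-formula. Your remark that $W_{0,0}^{\gamma,h_w}$ ``does not contribute a $\mS$-convolution term'' is slightly over-argued---the substitution $w=v+W_{0,0}^{\gamma,h_w}$ in the integrand is purely algebraic (splitting $(\gamma+\B)w$ into $(\gamma+\B)v+(\gamma+\B)W_{0,0}^{\gamma,h_w}$), and the paper simply writes this out directly without further justification.
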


\begin{proof} It readily follows from \Cref{intermediate} and~\eqref{1000} that $w$ can be expressed as
$$
w(t)=W_{w_0,\partial F(\phi)w+R_F(w)}^{0,h_w}(t,\cdot)\,,\quad t\in I(u_0)\,.
$$
That is, using \Cref{LemmaB},
\begin{align}\label{18}
w(t)=W_{w_0,\partial F(\phi)w+R_F(w)}^{0,h_w}(t,\cdot)=W_{w_0,\gamma w+\partial F(\phi)w+R_F(w)}^{\gamma,h_w}(t,\cdot)\,,\quad t\in I(u_0)\,,
\end{align}
for $\gamma\in\R$ arbitrarily fixed. Consequently, \eqref{18} and the representation formula of \Cref{LemmaB19} yield
\begin{equation*}
	w(t)=e^{-\gamma t}\mS (t)w_0 +\int_0^t e^{-\gamma (t-s)}\mS (t-s)\, \big((\gamma+\partial F(\phi))w+R_F(w)\big)(s)\,\rd s +W_{0,0}^{\gamma,h_w}(t,\cdot) 
\end{equation*}
for $t\in I(u_0)$. Equivalently, we can write
\begin{align*}
w(t)-W_{0,0}^{\gamma,h_w}(t,\cdot)&=e^{-\gamma t}\mS (t)w_0\\
&\quad  +\int_0^t e^{-\gamma (t-s)}\mS (t-s) \left((\gamma+\partial F(\phi))\big( w(s)-W_{0,0}^{\gamma,h_w}(s,\cdot)\big)\right.\\
& \qquad\qquad\qquad\qquad\qquad\qquad \left. + (\gamma+\partial F(\phi))W_{0,0}^{\gamma,h_w}(s,\cdot) +R_F(w(s))\right)\,\rd s 
\end{align*}
for $t\in I(u_0)$ and then invoke \Cref{LemmaH} (note that $W_{0,0}^{\gamma,h_w}(0,\cdot)=0$) to deduce that
\begin{align*}
	w(t)-W_{0,0}^{\gamma,h_w}(t,\cdot)&=\T_\phi (t)w_0 +\int_0^t \T_\phi (t-s) \left( (\gamma+\partial F(\phi))W_{0,0}^{\gamma,h_w}(s,\cdot) +R_F(w(s))\right)\,\rd s 
\end{align*}
for $t\in I(u_0)$ as claimed.
\end{proof}

Recall from \Cref{LemmaH} that there are $N_\alpha=N_\alpha(\phi)\ge 1$ and $\omega_\alpha:=\omega_\alpha(\phi)\in\R$ such that
	\begin{equation}\label{E3oooo}
	\|\T_\phi(t)\|_{\ml(\E_\alpha)}+ t^{\alpha}\,\|\T_\phi(t)\|_{\ml(\E_0,\E_\alpha)}\le N_\alpha\, e^{ -\omega_\alpha t}\,,\quad t> 0\,.
\end{equation}
The crucial assumption now is that
\begin{equation}\label{E3ooooh}
\omega_\alpha=\omega_\alpha(\phi)>0
\end{equation}
ensuring an exponential decay of the  semigroup $(\T_\phi (t))_{t\ge 0}$ associated with the linearization of problem~\eqref{P}. \\

As a last preparation let us prove the following simple result.

\begin{lem}\label{LemmaJ}
	Let $\beta>0$ and 
	$$
	p_{\alpha,\beta}(r):=\int_0^r e^{-\beta s}\, (r-s)^{-\alpha}\, s^{-\alpha}\, \rd s\,,\quad r>0\,.
	$$
	There is $c_{\alpha,\beta}>0$ such that
	\begin{equation*}
	p_{\alpha,\beta}(r)\le 	c_{\alpha,\beta} r^{-\alpha}\,,\quad r>0\,.
	\end{equation*}
\end{lem}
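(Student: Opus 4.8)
\textbf{Proof plan for \Cref{LemmaJ}.}
The quantity $p_{\alpha,\beta}(r)$ is a one-sided convolution of the two integrable (near $0$) kernels $s\mapsto s^{-\alpha}$ and $s\mapsto e^{-\beta s}s^{-\alpha}$, so the expected behaviour is: for small $r$ it behaves like the Beta-function integral $\int_0^r (r-s)^{-\alpha}s^{-\alpha}\,\rd s = \mathsf{B}(1-\alpha,1-\alpha)\,r^{1-2\alpha}$, which is bounded by a constant times $r^{-\alpha}$ precisely because $1-2\alpha\ge -\alpha$ (i.e. $\alpha\le 1$, which holds since $\alpha\in[0,1)$); for large $r$ the exponential weight forces integrability and gives a bound like $r^{-\alpha}$ up to a constant. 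The plan is to treat the regimes $r\le 1$ and $r\ge 1$ separately and then combine.

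First, for $0<r\le 1$, I would simply drop the exponential ($e^{-\beta s}\le 1$) and evaluate the remaining integral by the substitution $s=rt$, obtaining
\[
p_{\alpha,\beta}(r)\le \int_0^r (r-s)^{-\alpha}s^{-\alpha}\,\rd s=\mathsf{B}(1-\alpha,1-\alpha)\,r^{1-2\alpha}=\mathsf{B}(1-\alpha,1-\alpha)\,r^{-\alpha}\cdot r^{1-\alpha}\le \mathsf{B}(1-\alpha,1-\alpha)\,r^{-\alpha},
\]
since $r^{1-\alpha}\le 1$ on $(0,1]$. This handles the small-$r$ range with $c=\mathsf{B}(1-\alpha,1-\alpha)$.

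Second, for $r\ge 1$, I would split the integral at $s=r/2$. On $[0,r/2]$ one has $(r-s)^{-\alpha}\le (r/2)^{-\alpha}=2^\alpha r^{-\alpha}$, so that part is bounded by $2^\alpha r^{-\alpha}\int_0^{\infty}e^{-\beta s}s^{-\alpha}\,\rd s=2^\alpha\beta^{\alpha-1}\Gamma(1-\alpha)\,r^{-\alpha}$, a finite constant since $\alpha<1$. On $[r/2,r]$ one has $e^{-\beta s}\le e^{-\beta r/2}$ and $s^{-\alpha}\le (r/2)^{-\alpha}=2^\alpha r^{-\alpha}$, so that part is bounded by $2^\alpha r^{-\alpha}e^{-\beta r/2}\int_{r/2}^{r}(r-s)^{-\alpha}\,\rd s=2^\alpha r^{-\alpha}e^{-\beta r/2}\,\frac{(r/2)^{1-\alpha}}{1-\alpha}$, and since $e^{-\beta r/2}r^{1-\alpha}$ is bounded on $[1,\infty)$ (it tends to $0$), this too is $\le c\,r^{-\alpha}$. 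Adding the two contributions gives the bound on $r\ge 1$.

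Finally I would take $c_{\alpha,\beta}$ to be the maximum of the three explicit constants obtained above, yielding $p_{\alpha,\beta}(r)\le c_{\alpha,\beta}\,r^{-\alpha}$ for all $r>0$. There is essentially no obstacle here; the only point requiring a modicum of care is making sure the large-$r$ estimate genuinely produces the decay rate $r^{-\alpha}$ and not something worse — this is guaranteed by splitting at $r/2$ so that one factor of $(r-s)^{-\alpha}$ or $s^{-\alpha}$ can always be pulled out as $\approx r^{-\alpha}$ while the complementary integral stays uniformly bounded (using the exponential weight on the right half). I note that in the application (the bound in \Cref{LemmaF}, see the estimate $(t-s)^{1-2\alpha}\le c_\mu e^{\mu(t-s)}(t-s)^{-\alpha}$ used there) only the crude inequality $r^{1-2\alpha}\le c\,e^{\mu r}r^{-\alpha}$ is needed; the sharper statement of \Cref{LemmaJ} with the \emph{decaying} exponential $e^{-\beta s}$ is what makes the final stability estimate in \Cref{Sec:6} close, so the large-$r$ analysis above is the part that actually matters for \Cref{T2}.
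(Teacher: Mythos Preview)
Your proof is correct and essentially matches the paper's argument: the paper also splits at $s=r/2$, pulls out $(r-s)^{-\alpha}\le 2^\alpha r^{-\alpha}$ on $[0,r/2]$ and $s^{-\alpha}\le 2^\alpha r^{-\alpha}$ on $[r/2,r]$, and bounds the remaining integrals by $\beta^{\alpha-1}\Gamma(1-\alpha)$ and $\tfrac{(r/2)^{1-\alpha}}{1-\alpha}e^{-\beta r/2}$ respectively. The only difference is that your separate treatment of $r\le 1$ is unnecessary: the paper applies the split-at-$r/2$ estimate for \emph{all} $r>0$, since $r^{1-\alpha}e^{-\beta r/2}$ is bounded on the whole half-line (it vanishes both as $r\to 0^+$ and as $r\to\infty$), not just on $[1,\infty)$.
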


\begin{proof}
Noticing that
\begin{align*}
	p_{\alpha,\beta}(r)&\le 2^\alpha \, r^{-\alpha} \int_0^{r/2} e^{-\beta s}\,  s^{-\alpha}\, \rd s 	
	+2^\alpha \,r^{-\alpha} \int_{r/2}^r e^{-\beta s}\, (r-s)^{-\alpha}\,  \rd s \\
	&\le 2^\alpha \beta^{\alpha-1}\Gamma(1-\alpha) \, r^{-\alpha} 
	+ \frac{2^{2\alpha-1}}{1-\alpha} r^{1-\alpha} e^{-\beta r/2}\, r^{-\alpha}
\end{align*}
for $r>0$, the assertion follows.
	
\end{proof}

We are now in a position to finish off the proof of \Cref{T2}.


\subsection*{Proof of \Cref{T2}}


Recall that we impose~\eqref{E3ooooh}.
According to~\eqref{14E2} and~\eqref{14G2} there are two increasing functions
$ d_b,d_F\in C(\R^+,\R^+)$ with $d_b(0)=d_F(0)=0$ and
\begin{align}\label{21a}
	\|R_b(v)\|_{\E_0} &\le d_b(r)\,\| v\|_{\E_\alpha}\,,\quad \| v\|_{\E_\alpha}\le r\,,
\end{align}
and 
\begin{align}\label{21b}
	\|R_F(v)\|_{\E_0} &\le d_F(r)\,\| v\|_{\E_\alpha}\,,\quad \| v\|_{\E_\alpha}\le r\,.
\end{align}
Let $r>0$ be fixed (chosen small enough later; see~\eqref{30}) and consider now  $u_0\in \E_\alpha$ and $w_0=u_0-\phi$ such that
$\| w_0\|_{\E_\alpha}\le r/2$. Since $w\in C(I(u_0),\E_\alpha)$,
$$
t_1:=\sup\big\{t\in I(u_0)\,;\, \| w(s)\|_{\E_\alpha}\le r\ \text{for}\ 0\le s\le t\big\} >0\,.
$$ 
We infer from~\eqref{21a} and the definition of $h_w$ in \Cref{intermediate} 
\begin{align}\label{23}
	\| h_w(t)\|_{E_0}\le \| R_b(w(t))\|_{\E_0}\le d_b(r)\,  \| w(t)\|_{\E_\alpha}\, \,,\quad t\in [0,t_1]\,.
\end{align}
Let $t\in [0,t_1)$ be fixed in the following. Denoting by $\mu>0$ the constant from \Cref{LemmaF}, we choose now $\gamma\in\R$ such that
\begin{align}\label{2i3}
-\beta:=\omega_\alpha+\varpi+\mu-\gamma <0\,.
\end{align} 
We then use the representation formula~\eqref{20} for this $\gamma$ along with~\eqref{E3oooo} and~\eqref{21b} to derive
	\begin{align*}
	\|w(t)\|_{\E_\alpha} \le\, & \|\T_\phi (t)\|_{\ml(\E_\alpha)} \,\|w_0\|_{\E_\alpha} + \|W_{0,0}^{\gamma,h_w}(t,\cdot)\|_{\E_\alpha} \\
	&  +\int_0^t \| \T_\phi (t-s)\|_{\ml(\E_0,\E_\alpha)}  \, \| \gamma+\partial F(\phi)\|_{\ml(\E_\alpha,\E_0)} \,\| W_{0,0}^{\gamma,h_w}(s,\cdot)\|_{\E_\alpha} \,\rd s \\
		&  +\int_0^t \| \T_\phi (t-s)\|_{\ml(\E_0,\E_\alpha)}  \, \| R_F(w(s)) \|_{\E_0} \,\rd s \\
	\le\, & N_\alpha\,e^{-\omega_\alpha t} \,\|w_0\|_{\E_\alpha} + \|W_{0,0}^{\gamma,h_w}(t,\cdot)\|_{\E_\alpha} \\
	&  + N_\alpha\, \| \gamma+\partial F(\phi)\|_{\ml(\E_\alpha,\E_0)} \int_0^t   (t-s)^{-\alpha}  \, e^{-\omega_\alpha (t-s)}\, \|W_{0,0}^{\gamma,h_w}(s,\cdot)\|_{\E_\alpha} \,\rd s\\
	&  + N_\alpha\, d_F(r) \int_0^t   (t-s)^{-\alpha}  \, e^{-\omega_\alpha (t-s)}\, \|w(s)\|_{\E_\alpha} \,\rd s\,. 
\end{align*}
Set
$$
m_\phi:=\| \gamma+\partial F(\phi)\|_{\ml(\E_\alpha,\E_0)}\,.
$$
We then invoke \Cref{LemmaF} (with $\mu>0$ and $c_0>0$ as therein) and~\eqref{23} to obtain
\begin{align}
	\|w(t)\|_{\E_\alpha}  \le\, & N_\alpha\,e^{-\omega_\alpha t} \,\|w_0\|_{\E_\alpha} + c_0\int_0^t e^{(\varpi+\mu-\gamma)(t-a)}\,(t-a)^{-\alpha}\,\|h_w(a)\|_{E_0}\,\rd a \nonumber\\
	&  + N_\alpha\, m_\phi \int_0^t (t-s)^{-\alpha}\, e^{-\omega_\alpha (t-s)} \int_0^s   (s-a)^{-\alpha}  \, e^{(\varpi+\mu-\gamma) (s-a)}\, \| h_w(a)\|_{E_0} \rd a \,\rd s\nonumber\\
	&  + N_\alpha\, d_F(r) \int_0^t   (t-s)^{-\alpha}  \, e^{-\omega_\alpha (t-s)}\, \|w(s)\|_{\E_\alpha} \,\rd s \nonumber\\
	\le\, & N_\alpha\,e^{-\omega_\alpha t} \,\|w_0\|_{\E_\alpha} + c_0\, d_b(r)\,\int_0^t e^{(\varpi+\mu-\gamma)(t-a)}\,(t-a)^{-\alpha}\,  \| w(a)\|_{\E_\alpha}\,\rd a \nonumber\\
	&  + N_\alpha\, m_\phi\, d_b(r)\, \int_0^t (t-s)^{-\alpha}\, e^{-\omega_\alpha (t-s)} \int_0^s   (s-a)^{-\alpha}  \, e^{(\varpi+\mu-\gamma) (s-a)}\, \| w(a)\|_{\E_\alpha} \rd a \,\rd s \nonumber\\
	&  + N_\alpha\, d_F(r) \int_0^t   (t-s)^{-\alpha}  \, e^{-\omega_\alpha (t-s)}\, \|w(s)\|_{\E_\alpha} \,\rd  s\,. \label{611}	
\end{align}
As for the third term in~\eqref{611} we note that
\begin{align*}
\int_0^t (t-s)^{-\alpha}\, & e^{-\omega_\alpha (t-s)} \int_0^s   (s-a)^{-\alpha}  \, e^{(\varpi+\mu-\gamma) (s-a)}\, \| w(a)\|_{\E_\alpha} \rd a \,\rd s\\
& =
e^{-\omega_\alpha t}\int_0^t e^{\omega_\alpha a}\, \| w(a)\|_{\E_\alpha} \int_a^t e^{(\omega_\alpha+\varpi+\mu-\gamma) (s-a)}\,(t-s)^{-\alpha}\,(s-a)^{-\alpha}\, \rd s\,  \rd a\\
&\le c_{\alpha,\beta}\, e^{-\omega_\alpha t}\int_0^t (t-a)^{-\alpha}\, e^{\omega_\alpha a}\, \| w(a)\|_{\E_\alpha}\,  \rd a\,,
\end{align*}
where we used~\eqref{2i3} and \Cref{LemmaJ} for the last estimate. Using this in~\eqref{611} and again~\eqref{2i3} in the second term of~\eqref{611} to drop part of the exponential we infer that
\begin{align*}
	e^{\omega_\alpha t}\|w(t)\|_{\E_\alpha} 	\le\, & N_\alpha\, \|w_0\|_{\E_\alpha} \\
	&  + \big( c_0\,  d_b(r)+N_\alpha\, m_\phi\, d_b(r)\, c_{\alpha,\beta} + N_\alpha\, d_F(r)\big) \int_0^t (t-a)^{-\alpha}\, e^{\omega_\alpha a}\, \| w(a)\|_{\E_\alpha}\,  \rd a  
\end{align*}
for $t\in [0,t_1)$.
Fix now $\omega\in (0,\omega_\alpha)$ and choose $r>0$ such that
\begin{align}\label{30}
c_0\, d_b(r)+N_\alpha\, m_\phi\, d_b(r)\, c_{\alpha,\beta} + N_\alpha\, d_F(r)\le \left(\frac{2(\omega_\alpha-\omega)}{3}\right)^{1-\alpha} \frac{1}{\Gamma(1-\alpha)}=:\sigma_\alpha\,.
\end{align}
Then
\begin{align*}
	e^{\omega_\alpha t}\|w(t)\|_{\E_\alpha} 	\le  N_\alpha\, \|w_0\|_{\E_\alpha}  + \sigma_\alpha \int_0^t (t-a)^{-\alpha}\, e^{\omega_\alpha a}\, \| w(a)\|_{\E_\alpha}\,  \rd a  \,,\quad t \in [0,t_1)\,,
\end{align*}
so that Gronwall's inequality \cite[II.Theorem 3.3.1]{LQPP} implies that there is a constant $k_0\ge 1$ such that
\begin{align*}
	e^{\omega_\alpha t}\|w(t)\|_{\E_\alpha} 	\le  k_0\,N_\alpha\, \|w_0\|_{\E_\alpha}  e^{\frac{3}{2}(\Gamma(1-\alpha)\sigma_\alpha )^{1/(1-\alpha)}t}\,,\quad t\in [0,t_1)\,. 
\end{align*}
That is,  by choice of $\sigma_\alpha$,
\begin{align}\label{31}
	\|w(t)\|_{\E_\alpha} 	\le\,  k_0\,N_\alpha\, \|w_0\|_{\E_\alpha}  e^{-\omega t}\,,\quad t\in [0,t_1)\,. 
\end{align}
Consequently, for every $w_0=u_0-\phi\in \E_\alpha$ with
$$
  \|w_0\|_{\E_\alpha} \le \frac{r}{2k_0\,N_\alpha}\le \frac{r}{2}
$$
we have
\begin{align*}
	\|w(t)\|_{\E_\alpha} 	\le  \frac{r}{2} \, e^{-\omega t}\le \frac{r}{2}\,,\quad t\in [0,t_1)\,. 
\end{align*}
By definition of $t_1$, we conclude $t_1=\sup I(u_0)$ and then $I(u_0)=\R^+$ according to \Cref{T1}. Therefore, invoking~\eqref{31} we have proven that
\begin{align*}
	\|u(t;u^0)-\phi\|_{\E_\alpha} 	\le k_0\,N_\alpha\, \|u_0-\phi\|_{\E_\alpha}  e^{-\omega t}\,,\quad t\in \R^+\,,
\end{align*}
whenever
$$
\|u_0-\phi\|_{\E_\alpha} \le \frac{r}{2 k_0 N_\alpha}\,.
$$
 This completes the proof of \Cref{T2}.


\section{Examples}\label{Sec:Exmp}


We shed some light on the assumptions required for \Cref{T2} and consider particular cases. For simplicity we assume throughout that $a_m<\infty$.

\subsection*{Stability of the Trivial Equilibrium}

Consider the trivial equilibrium $\phi=0$.  Note that then $\mathfrak{b}_\phi=b(0,\cdot)$ in \eqref{bb}. Assume (also for simplicity) that 
\begin{subequations}\label{MMM}
\begin{equation}\label{71cc}
m(0,\cdot)\in C^\rho(J,\mathcal{L}_+(E_\alpha,E_0))\,.
\end{equation}
Then  
$$
A_0:=A-m(0,\cdot)\in C^\rho(J,\mathcal{H}(E_1,E_0))
$$ 
generates an evolution operator $\Pi_0$ on $E_0$ due to \cite[II.Corollary~4.4.2]{LQPP}. Suppose further (see~\eqref{B3}) that
\begin{equation}\label{71cx}
	b(0,\cdot)\in   L_{\infty}\big(J,\ml(E_\theta)\big)\,,\quad \theta\in [0,1]\,,
\end{equation}
and that
\begin{equation}\label{71cy}
	b(0,a){\Pi_0}(a,0)\in\ml(E_0)  \text{ is strongly positive for $a$ in a subset of $J$ of positive measure}\,.
\end{equation}
\end{subequations}
Then  \cite[Corollary 5.3]{WalkerIUMJ} implies that the  growth bound of the semigroup $(\T_0(t))_{t\ge 0}$ from Proposition~\ref{LemmaH} coincides with the spectral bound of its generator and also with the unique $\lambda_0\in \R$ such that
\begin{equation}\label{r0}
r(Q_{\lambda_0})=1\,,
\end{equation}
where $r( Q_{\lambda})$ denotes the spectral radius of the strongly positive compact operator
\begin{equation}\label{Q0}
Q_{\lambda}:=\int_0^{a_m} b(0,a)\,\Pi_{0,\lambda}(a,0)\, \rd a\in\ml(E_0)\,, \quad \lambda\in \R\,.
\end{equation}
Roughly speaking $r( Q_{\lambda})$ may be interpreted as the expected number of offspring per individual during its life span.
Actually, the mapping $\lambda\mapsto r(Q_{\lambda})$ is continuous and strictly decreasing \cite[Lemma 4.1]{WalkerIUMJ}. Thus, the assumption $\lambda_0<0$ in~\eqref{r0} is equivalent to $r(Q_0)<1$, and hence to $\omega_\alpha(0)>0$ in~\eqref{E3ooo} (see \Cref{R14}).

Consequently, we can state the asymptotic stability of the trivial equilibrium as follows:
	
	\begin{cor}\label{T3}
		Let $\alpha\in [0,1)$ and suppose \eqref{A1a}, \eqref{A1b}, \eqref{A1c}, and \eqref{A1d}. Moreover, suppose \eqref{B1} and \eqref{B2} are satisfied for $\phi=0$ and assume \eqref{MMM}.
		If $r(Q_0)<1$, then the trivial equilibrium $\phi=0$ is asymptotically exponentially stable in~$\E_\alpha$.
\end{cor}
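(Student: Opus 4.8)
The plan is to derive Corollary~\ref{T3} directly from \Cref{T2} by verifying that all its hypotheses are met for $\phi=0$ and that the spectral condition $r(Q_0)<1$ translates into the growth bound condition $\omega_\alpha(0)>0$. First I would observe that $\phi=0$ is trivially an equilibrium solution lying in $\E_1\cap C(J,E_0)$, and that for $\phi=0$ one has $\phi\in\E_1$ with $\phi(a)=0$, so conditions~\eqref{B3} and~\eqref{B2b} reduce to statements about $b(0,\cdot)$ alone. Indeed, \eqref{B2b} is automatic since $\partial b(\bar 0,a)[v]\phi(a)=0$, and \eqref{B3} follows from the assumed~\eqref{71cx}. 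Together with~\eqref{B1} and~\eqref{B2} imposed for $\phi=0$, this shows that assumption~\eqref{B} holds. The hypotheses~\eqref{A1a}, \eqref{A1b}, \eqref{A1c}, \eqref{A1d} are assumed outright, and since we work with $a_m<\infty$, the evolution-operator estimate~\eqref{EO} holds automatically by \cite[II.Lemma~5.1.3]{LQPP} (for any choice of $M_\alpha$, $\varpi$) and~\eqref{A4} is vacuous; moreover~\eqref{A1aa} (resolvent positivity) is part of~\eqref{A1b}--\eqref{A1c}'s companion structure — here one should note that in the examples section resolvent positivity of $A(a)$ is tacitly assumed as in the introduction, or one simply cites that the concrete $A(a)=\divv(d(a,\cdot)\nabla_x\cdot)$ is resolvent positive. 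Thus assumption~\eqref{A} is in force.

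Next I would identify the linearized objects at $\phi=0$: since $\phi=0$, we get $(\partial F(0)v)(a)=-m(0,a)v(a)$ (the term $\partial m(\bar 0,a)[\bar v]\phi(a)$ vanishes) and $\mathcal{M}_0(v)=\int_0^{a_m}b(0,a)v(a)\,\rd a$, again the $\partial b$-term dropping out because $\phi=0$. Hence $\mathfrak{b}_\phi=b(0,\cdot)$ as noted. The key point is then to invoke the perturbed evolution operator: under~\eqref{71cc}, $A_0:=A-m(0,\cdot)\in C^\rho(J,\mathcal{H}(E_1,E_0))$ generates a positive evolution operator $\Pi_0$ by \cite[II.Corollary~4.4.2]{LQPP}, and the semigroup $(\T_0(t))_{t\ge 0}$ from \Cref{LemmaH} (with $\A$ the generator of $\mS(t)=W^{0,0}_{\cdot,0}(t)$ built from $\Pi$ and $\mathfrak{b}_\phi=b(0,\cdot)$, and $\B=\partial F(0)=-m(0,\cdot)$) is precisely the age-structured semigroup associated to the birth law $b(0,\cdot)$ and the parabolic part $\partial_t-A_0$. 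I would then cite \cite[Corollary~5.3]{WalkerIUMJ}, whose hypotheses are exactly~\eqref{71cx} and the strong-positivity condition~\eqref{71cy} (plus compactness of the embedding $E_1\stackrel{d}{\dhr}E_0$), to conclude that the growth bound of $(\T_0(t))_{t\ge 0}$ on $\E_\alpha$ equals the spectral bound of its generator, and equals the unique $\lambda_0\in\R$ with $r(Q_{\lambda_0})=1$, where $Q_\lambda$ is the strongly positive compact operator in~\eqref{Q0}.

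The final step is the monotonicity argument: by \cite[Lemma~4.1]{WalkerIUMJ}, $\lambda\mapsto r(Q_\lambda)$ is continuous and strictly decreasing, so $\lambda_0<0$ if and only if $r(Q_0)<1$. Since the growth bound of $(\T_0(t))_{t\ge 0}$ on $\E_\alpha$ equals $\lambda_0$, the assumption $r(Q_0)<1$ gives a negative growth bound, which by \Cref{R14}(a) is equivalent to $\omega_\alpha(0)>0$ in~\eqref{C}/\eqref{E3ooo}. All hypotheses of \Cref{T2} with $\phi=0$ are thus satisfied, and \Cref{T2} yields that $\phi=0$ is asymptotically exponentially stable in~$\E_\alpha$, which is the assertion. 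I expect the only genuine subtlety — more bookkeeping than obstacle — to be checking that the hypotheses of the cited results \cite[Corollary~5.3, Lemma~4.1]{WalkerIUMJ} line up precisely with~\eqref{MMM}, in particular that the compactness $E_1\stackrel{d}{\dhr}E_0$ (already standing) together with $b(0,a)\Pi_0(a,0)$ being strongly positive on a set of positive measure suffices for the irreducibility/compactness needed to equate growth bound and spectral bound; everything else is a direct translation into the framework of \Cref{T2}.
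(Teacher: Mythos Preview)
Your proposal is correct and follows essentially the same route as the paper: verify that for $\phi=0$ the hypotheses~\eqref{B} collapse to~\eqref{71cx} (with~\eqref{B2b} trivial), identify $\partial F(0)=-m(0,\cdot)$ and $\mathcal{M}_0=\int_0^{a_m}b(0,a)\,\cdot\,\rd a$, then invoke \cite[Corollary~5.3]{WalkerIUMJ} and the strict monotonicity of $\lambda\mapsto r(Q_\lambda)$ from \cite[Lemma~4.1]{WalkerIUMJ} to translate $r(Q_0)<1$ into a negative growth bound, and finish via \Cref{R14}(a) and \Cref{T2}. The only wrinkle is your handling of~\eqref{A1aa}: it is not ``part of~\eqref{A1b}--\eqref{A1c}'s companion structure'' but a separate hypothesis, and indeed the corollary as stated omits it; since the positivity of solutions plays no role in the stability estimate of \Cref{T2}, this omission is harmless (and in the concrete applications~\eqref{A1aa} is verified anyway).
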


We shall get back to this below. It is also worth noting that the semigroup $(\T_0(t))_{t\ge 0}$ has ansychronous exponential growth if conversely $r(Q_0)>1$, see \cite[Corollary 2.6]{WalkerIUMJ}.

\subsection*{Application to Problem~\eqref{Eu1a}}

Let $\Omega\subset\R^n$ be bounded with smooth boundary and consider problem~\eqref{Eu1a} in the form
\begin{subequations}	\label{P99}
	\begin{align}
		\partial_t u+\partial_a u&=\mathrm{div}_x\big(d(a,x)\nabla_xu\big)-m\big(\bar u(t,x),a\big)u\ , && t>0\, , &  a\in (0,a_m)\, ,& & x\in\Om\, ,\\
		u(t,0,x)&=\int_0^{a_m} b\big(\bar u(t,x),a\big)u(t,a,x)\,\rd a\, ,& & t>0\, , & & & x\in\Om\, ,\\
		\partial_N u(t,a,x)&=0\ ,& & t>0\, , &  a\in (0,a_m)\, ,& & x\in\partial\Om\, ,\\
		u(0,a,x)&=u_0(a,x)\ ,& & &  a\in (0,a_m)\, , & & x\in\Om\,,
	\end{align}
\end{subequations}
with
$$
\bar v (x)=\int_0^{a_m} \nu(a,x)\,v(a,x)\,\rd a\,,\quad x\in\Omega\,.
$$
We assume for the data (striving rather for simple than optimal conditions) that
\begin{subequations}\label{j}
\begin{align}
&d\in C^{\rho,1}(J\times\bar\Omega,(0,\infty))\,,\label{j1}\\
&b,m\in C^{4,0}(\R\times J,\R^+)\,,\label{j2}\\
&\nu\in C^{0,2}(J\times\bar\Omega,\R^+)\,.\label{j3}
\end{align}
\end{subequations}
For instance, $\nu\equiv 1$ is a possible choice.
Let $q>n$ and set $E_0:=L_q:=L_q(\Omega)$ and 
$$
E_1:=W_{q,N}^2:=\{v\in W_{q}^2(\Omega)\,;\; \partial_N w=0 \text{ on } \partial\Omega\}\,.
$$ 
Then $E_1$ is compactly embedded in $E_0$ and, for real interpolation,
\begin{equation}\label{interpol}
E_\theta:=\big(L_q,W_{q,N}^2)_{\theta,q} \doteq W_{q,N}^{2\theta}:=\left\{\begin{array}{ll} \{v\in W_{q}^{2\theta}(\Omega)\,;\; \partial_N w=0 \text{ on } \partial\Omega\}\,, & 1+1/q<2\theta\le 2\,,\\[3pt]
	 W_{q}^{2\theta}(\Omega)\,, & 0\le 2\theta<1+1/q\,.\end{array} \right.
\end{equation}
Setting
$$
A(a)w:=\mathrm{div}_x\big(d(a,\cdot)\nabla_xw\big)\, ,\quad w\in W_{q,N}^2\,,\quad a\in J=[0,a_m]\,,
$$
it follows from~\eqref{j1} that $A \in  C^\rho\big(J,\mathcal{H}(W_{q,N}^2,L_q)\big)$ so that~\eqref{A1a} is valid. Moreover, the maximum principle ensures~\eqref{A1aa} while \cite[II.Lemma 5.1.3]{LQPP} entails~\eqref{EO}. Fixing $2\alpha\in (n/q,2)\setminus\{1+1/q\}$, it follows from~\eqref{j2} and \cite[Proposition 4.1]{WalkerAMPA} that
$$
[v\mapsto b(v,\cdot)]\,,\, [v\mapsto m(v,\cdot)] \in C^1\big(W_{q,N}^{2\alpha},L_\infty(J,W_{q,N}^{2\eta})\big)\,,\quad 0\le 2\eta<2\alpha\,,\quad 2\eta\not= 1+1/q\,,
$$
with
\begin{align}\label{diff}
\big(\partial b(v,\cdot)[h]\big)(a)(x)=\partial_1 b(v(x),a)h(x)\,,\qquad (a,x)\in J\times \Om\,, \quad  v, h\in W_{q,N}^{2\alpha}\,.
\end{align}
In particular, using that pointwise multiplication is obviously continuous as a mapping $$W_{q,N}^{2\eta}\times W_{q,N}^{2\alpha}\to L_q$$ we infer from \eqref{interpol} that~\eqref{B1} and~\eqref{B2} are valid and hence also ~\eqref{A1b} and~\eqref{A1c}. Moreover, if $\phi\in \E_1=L_1(J,W_{q,N}^2)$ is an arbitrary equilibrium, then 
$$
\bar\phi=\int_0^{a_m} \nu(a,\cdot)\,\phi(a)\,\rd a\in W_{q,N}^2
$$
owing to~\eqref{j3}, hence $b(\bar\phi,\cdot) \in L_\infty(J, W_{q,N}^2)$. Since pointwise multiplication 
$$
W_{q,N}^{2}\times W_{q,N}^{2\alpha}\to W_{q,N}^{2\alpha}
$$ 
is continuous \cite{AmannMult} we deduce~\eqref{B3}. Moreover, since $\partial_1 b(\bar\phi,\cdot)\in L_\infty(J,W_{q,N}^{2-\ve})$ for every $\ve>0$ small and since pointwise multiplication $W_{q,N}^{2-\ve}\times W_{q,N}^{2\theta}\to W_{q,N}^{2\theta}$ is continuous for $\theta=0,\alpha$, we also deduce~\eqref{B2b}. Clearly, \eqref{j3} implies~\eqref{A1d}. Also note that if 
\begin{equation}\label{pl}
b(z,a)>0\,,\quad (z,a)\in\R\times J\,,
\end{equation}
 then \cite[Section 13]{DanersKochMedina}  implies~\eqref{71cy} while ~\eqref{71cx} follows from the above observations.

In particular, assumptions~\eqref{A} and~\eqref{B} are all satisfied.

\subsubsection*{Stability of the Trivial Equilibrium Revisited}

The verification of the crucial assumption~\eqref{E3ooooh} is not straightforward (and depends, of course, on the concrete equilibrium). However, for the trivial equilibrium $\phi=0$ this is possible under suitable assumptions as seen previously. We use the same notation as above and still suppose~\eqref{j} and \eqref{pl} so that assumptions~\eqref{A} and~\eqref{B} as well as~~\eqref{71cy} and ~\eqref{71cx} are satisfied by the previous considerations (\eqref{71cc} is not required here since $m(0,\cdot)$ is independent of $x\in\Omega$). If $\Pi$ still denotes the evolution operator on $L_q$ associated with
$$
A(a)w=\mathrm{div}_x\big(d(a,\cdot)\nabla_xw\big)\, ,\quad w\in W_{q,N}^2\,,\quad a\in J\,,
$$
then the evolution operator $\Pi_0$ associated with $A_0=A-m(0,\cdot)$ is simply
$$
\Pi_0(a,\sigma)=e^{-\int_\sigma^a m(0,s)\rd s}\,\Pi(a,\sigma)\,,\quad 0\le \sigma\le a\in J\,,
$$
so that the operator $Q_0$ from~\eqref{Q0} is
$$
Q_0=\int_0^{a_m} b(0,a)\, e^{-\int_0^a m(0,s)\rd s}\,\Pi(a,0)\,\rd a\,.
$$
Since  $\Pi(a,0)\mathbf{1}=\mathbf{1}$ for $\mathbf{1}:=[x\mapsto 1]\in W_{q,N}^2$, it follows that
$$
Q_0\mathbf{1}=  \int_0^{a_m} b(0,a)\, e^{-\int_0^a m(0,s)\rd s}\,\rd a\,\mathbf{1}\,.
$$
That is, $\mathbf{1}$ is a positive eigenfunction of the strongly positive compact operator $Q_0$  so that the Krein-Rutman theorem (e.g., see \cite[Theorem 12.3]{DanersKochMedina}) entails that
$$
r(Q_0)= \int_0^{a_m} b(0,a)\, e^{-\int_0^a m(0,s)\rd s}\,\rd a\,.
$$
Consequently, we obtain from \Cref{T3}:

\begin{cor}\label{CC}
Assume \eqref{j}, and \eqref{pl}. If 
$$
\int_0^{a_m} b(0,a)\, e^{-\int_0^a m(0,s)\rd s}\,\rd a<1\,,
$$
then $\phi=0$ is an  asymptotically exponentially stable equilibrium of problem~\eqref{P99} in the phase space $L_1\big((0,a_m),W_{q,N}^{2\alpha}(\Omega)\big)$ for $2\alpha\in (n/q,2)\setminus\{1+1/q\}$.
\end{cor}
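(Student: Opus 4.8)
The plan is to obtain \Cref{CC} as a direct consequence of \Cref{T3} applied to problem~\eqref{P99}, so the work reduces to checking the hypotheses of \Cref{T3} and to computing the spectral radius $r(Q_0)$ in closed form. The validity of~\eqref{A1a}, \eqref{A1b}, \eqref{A1c}, \eqref{A1d}, \eqref{B1}, and~\eqref{B2} for $\phi=0$ --~with $E_0=L_q$, $E_1=W_{q,N}^2$, $E_\theta\doteq W_{q,N}^{2\theta}$, and $2\alpha\in(n/q,2)\setminus\{1+1/q\}$~-- has already been established in the preceding discussion from~\eqref{j1}--\eqref{j3}, \cite[Proposition~4.1]{WalkerAMPA}, and the mapping properties of pointwise multiplication. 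Assumption~\eqref{71cc} needs no separate argument here: since $m(0,\cdot)$ is independent of $x\in\Omega$, \eqref{j2} makes it a smooth scalar function of $a$ alone, which acts as multiplication and thus lies in $C^\rho\big(J,\ml(E_\alpha,E_0)\big)$. Finally, \eqref{71cx} follows from the mapping properties of $b(0,\cdot)$, and~\eqref{71cy} from the strong positivity of $\Pi(a,0)$ on $L_q$ (see \cite[Section~13]{DanersKochMedina}) together with the strict positivity~\eqref{pl} of the birth rate, exactly as noted above.

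Next I would compute $Q_0$ explicitly. Because $m(0,\cdot)$ acts as multiplication by the scalar function $a\mapsto m(0,a)$, it commutes with each $A(a)$, so the evolution operator $\Pi_0$ generated by $A_0=A-m(0,\cdot)$ factors as
\begin{equation*}
\Pi_0(a,\sigma)=e^{-\int_\sigma^a m(0,s)\,\rd s}\,\Pi(a,\sigma)\,,\qquad 0\le\sigma\le a\in J\,,
\end{equation*}
with $\Pi$ the evolution operator of the pure diffusion part; hence $Q_0=\int_0^{a_m} b(0,a)\,e^{-\int_0^a m(0,s)\,\rd s}\,\Pi(a,0)\,\rd a$. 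The key observation is that the constant function $\mathbf{1}:=[x\mapsto 1]$ belongs to $W_{q,N}^2$ and satisfies $A(a)\mathbf{1}=0$, so that $\Pi(a,0)\mathbf{1}=\mathbf{1}$ for every $a\in J$. Applying $Q_0$ to $\mathbf{1}$ therefore gives
\begin{equation*}
Q_0\mathbf{1}=\Big(\int_0^{a_m} b(0,a)\,e^{-\int_0^a m(0,s)\,\rd s}\,\rd a\Big)\,\mathbf{1}\,,
\end{equation*}
exhibiting $\mathbf{1}$ as a strictly positive eigenfunction of the strongly positive compact operator $Q_0$. By the Krein--Rutman theorem (e.g. \cite[Theorem~12.3]{DanersKochMedina}), the associated eigenvalue is the only one admitting a positive eigenvector and it coincides with the spectral radius, so
\begin{equation*}
r(Q_0)=\int_0^{a_m} b(0,a)\,e^{-\int_0^a m(0,s)\,\rd s}\,\rd a\,.
\end{equation*}
The hypothesis of \Cref{CC} is thus precisely $r(Q_0)<1$, which by~\eqref{r0} and \Cref{R14} is equivalent to $\omega_\alpha(0)>0$ in~\eqref{E3ooo}, and \Cref{T3} delivers the asserted asymptotic exponential stability of $\phi=0$ in $\E_\alpha=L_1\big((0,a_m),W_{q,N}^{2\alpha}(\Omega)\big)$.

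I do not anticipate a substantive obstacle: the argument is essentially bookkeeping built around two structural facts --~that $m(0,\cdot)$ carries no spatial dependence, so it pulls out of the evolution operator as a scalar, and that Neumann diffusion fixes constants, so $\mathbf{1}$ is an eigenfunction of $Q_0$. The only point deserving a little care is confirming that $Q_0$ is genuinely compact and strongly positive on $L_q$ --~compactness from the smoothing of $\Pi(a,0)$ into $W_{q,N}^{2\beta}$ and its compact embedding in $L_q$, irreducibility from~\eqref{pl} and \cite[Section~13]{DanersKochMedina}~-- which is what licenses the use of the Krein--Rutman theorem and the identification of the positive eigenvalue with $r(Q_0)$.
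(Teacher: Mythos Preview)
Your proposal is correct and follows essentially the same approach as the paper: verify the hypotheses of \Cref{T3} from the preceding discussion, exploit the spatial independence of $m(0,\cdot)$ to factor $\Pi_0(a,\sigma)=e^{-\int_\sigma^a m(0,s)\,\rd s}\Pi(a,\sigma)$, observe that $\Pi(a,0)\mathbf{1}=\mathbf{1}$ makes $\mathbf{1}$ a positive eigenfunction of the strongly positive compact operator $Q_0$, and apply Krein--Rutman to identify the displayed integral as $r(Q_0)$. Your write-up even supplies a little more justification (e.g.\ why $\Pi(a,0)\mathbf{1}=\mathbf{1}$ and why $Q_0$ is compact and strongly positive) than the paper does.
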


For instance, if the death rate dominates the birth rate in the sense that
$$
b(0,a)\le m(0,a)\,,\quad a\in J\,,
$$
then
$$
\int_0^{a_m} b(0,a)\, e^{-\int_0^a m(0,s)\rd s}\,\rd a \le \int_0^{a_m} m(0,a)\, e^{-\int_0^a m(0,s)\rd s}\,\rd a=1-e^{-\int_0^{a_m} m(0,s)\rd s} <1
$$
as required in \Cref{CC}.

\subsection*{Stability of a Nontrivial Equilibrium of Problem~\eqref{Eu1a}}

We only sketch a particular case for a nontrivial positive equilibrium of problem~\eqref{Eu1a}.
Let $\Omega\subset\R^n$ be bounded with smooth boundary and consider
\begin{subequations}\label{PP}
		\begin{align}
	\partial_t u+\partial_a u&=\mathrm{div}_x\big(d(a,x)\nabla_xu\big)-m\big(a,x\big)u\ , && t>0\, , &  a\in (0,a_m)\, ,& & x\in\Om\, ,\\
	u(t,0,x)&=\int_0^{a_m} b\big(\bar u(t,x),a\big)u(t,a,x)\,\rd a\, ,& & t>0\, , & & & x\in\Om\, ,\\
	\partial_N u(t,a,x)&=0\ ,& & t>0\, , &  a\in (0,a_m)\, ,& & x\in\partial\Om\, ,\\
	u(0,a,x)&=u_0(a,x)\ ,& & &  a\in (0,a_m)\, , & & x\in\Om\,,
\end{align}
\end{subequations}
with
$$
\bar v (x)=\int_0^{a_m} \nu(a,x)\,v(a,x)\,\rd a\,,\quad x\in\Omega\,.
$$
For the data we assume
\begin{subequations}\label{J}
	\begin{align}
		&d\in C^{\rho,1}(J\times\bar\Omega,(0,\infty))\,,\label{J1}\\
		&b\in C^{4,0}(\R\times J,\R^+)\,,\quad b>0\,,\label{J2}\\
		& m\in C^{\rho,2}(J\times \bar\Omega,\R^+)\,,\label{J3}\\
		&\nu\in C^{0,2}(J\times\bar\Omega,\R^+)\,.\label{J4}
	\end{align}
Note that we assume $m=m(a,x)$ to be independent of $u$. Consider now a (nontrivial) positive equilibrium 
$$
\phi\in\E_1\cap C(J,E_0)=L_{1}\big(J,\ml(W_{q,N}^{2})\big)\cap C([0,a_m],L_q)
$$ 
and fix again $2\alpha\in (n/q,2)$. Then, as above,
\begin{equation*}
	b(\bar \phi,\cdot)\in   L_{\infty}\big(J,\ml(W_{q,N}^{2\theta})\big)\,,\quad 2\alpha\le 2\theta<2\,,
\end{equation*}
and 
\begin{equation*}
\big[v\mapsto \partial b(\bar \phi,\cdot)[ v]\phi\big]\in \ml\big(W_{q,N}^{2\theta}, L_1(J,W_{q,N}^{2\theta})\big)\,,\quad 2\alpha\le 2\theta<2\,,
\end{equation*}
that is, \eqref{B3} and \eqref{B2b} are satisfied for $\theta\in (\alpha,1)$. Since $m=m(a,x)$ is independent of $u$, we may put
$$
A(a)w:=\mathrm{div}_x\big(d(a,\cdot)\nabla_xw\big)-m(a,\cdot) w\, ,\quad w\in W_{q,N}^2\,,\quad a\in J\,,
$$ 
from the very beginning. Then $A \in  C^\rho\big(J,\mathcal{H}(W_{q,N}^2,L_q)\big)$ and we may interpret $F=0$ in~\eqref{2}. Then the semigroup  $(\T_\phi(t))_{t\ge 0}$ coincides with the semigroup $(\mS(t))_{t\ge 0}$ from \Cref{LemmaBB} and is thus
 eventually compact on $\E_\alpha$ according to \cite[Corollary 2.2]{WalkerIUMJ}. Hence, its growth bound coincides with the spectral bound $s_\alpha$ of its generator due to \cite[IV. Corollary 3.12]{EngelNagel}, and the spectrum of the generator consists of eigenvalues only \cite[V. Corollary 3.2]{EngelNagel}. Clearly, any such eigenvalue is also an eigenvalue of the generator of the semigroup  $(\T_\phi(t))_{t\ge 0}$ considered on~$\E_0$, hence $s_\alpha\le s_0$.  Assuming further that
\begin{align}\label{pos}
\partial_1 b(\bar\phi(x),a)\ge 0\,,\quad (a,x)\in J\times \Omega\,,
\end{align}
\end{subequations}
it follows from \cite[Theorem 2.8]{WalkerIUMJ} that $(\T_\phi(t))_{t\ge 0}$  is a positive semigroup on $\E_0$. One then argues as in \cite[Proposition 5.2]{WalkerIUMJ} to conclude that $s_0$ is an eigenvalue of the generator of $(\T_\phi(t))_{t\ge 0}$.
If  $\psi$ is a corresponding eigenfunction, it satisfies (see \cite{WalkerIUMJ})
	\begin{align}
	\partial_a\psi \, &=    (-s_0+A(a) )\psi \,,  \quad a\in J\, ,\label{wqq}\\ 
\psi(0)&=\mathcal{M}_\phi(\psi)  \label{wq}
\end{align} 
with (using~\eqref{diff})
$$
\mathcal{M}_\phi(\psi)=\int_0^{a_m} b(\bar\phi,a)\, \psi(a)\,\rd a+ \int_0^{a_m} \partial_1 b(\bar\phi,a)\, \phi(a)\,\rd a\, \int_0^{a_m} \nu(a)\, \psi(a)\,\rd a\,.
$$
Letting $\Pi_{s_0}$ denote the evolution operator associated with $-s_0+A\in  C^\rho\big(J,\mathcal{H}(W_{q,N}^2,L_q)\big)$ it follows from~\eqref{wqq} that
$$
\psi(a)=\Pi_{s_0}(a,0) \psi(0)\,,\quad a\in J\,.
$$
Plugging this into~\eqref{wq} implies that
$$
\psi(0)= Q_{\phi,{s_0}} \psi(0)\,,
$$
where
$$
Q_{\phi,{\lambda}} z:=\int_0^{a_m} b(\bar\phi,a)\, \Pi_{\lambda}(a,0)\,z\,\rd a+ \int_0^{a_m} \partial_1 b(\bar\phi,a)\, \phi(a)\,\rd a\, \int_0^{a_m} \nu(a)\, \Pi_{\lambda}(a,0)\, z\,\rd a
$$
is a compact operator on $E_0=L_q$ for $\lambda\in\R$. Hence, $1$ is an eigenvalue of $Q_{\phi,{s_0}}$ so that 
$$
1\le r(Q_{\phi,{s_0}})\,.
$$ 
By~\eqref{J2} and~\eqref{pos},
the operator $Q_{\phi,\lambda}$ is even strongly positive and the spectral radius $r(Q_{\phi,\lambda})$ is a decreasing function with respect to $\lambda$ (this is shown analogously to  \cite[Lemma 2.4, Lemma 2.5]{WalkerMOFM}). Therefore, the assumption
\begin{align}\label{ko}
r(Q_{\phi,0})<1
\end{align}
implies $s_0<0$ and hence a negative growth bound for the semigroup  $(\T_\phi(t))_{t\ge 0}$ on $\E_\alpha$. Consequently, one obtains from \Cref{T2}:

\begin{cor}
 Let $\phi\in\ L_{1}\big(J,\ml(W_{q,N}^{2})\big)\cap C([0,a_m],L_q)$ be a positive equilibrium to~\eqref{PP}	and assume~\eqref{J} and~\eqref{ko}. Then $\phi$ is asymptotically exponentially stable in $\E_\alpha=L_1(J,W_{q,N}^{2\alpha})$.
\end{cor}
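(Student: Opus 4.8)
The plan is to deduce the corollary directly from \Cref{T2}. Concretely, I would fix the Banach-lattice setting $E_0:=L_q$, $E_1:=W_{q,N}^2$ with $q>n$ and $2\alpha\in(n/q,2)$, verify that \eqref{PP} fits the abstract framework (i.e. check \eqref{A} and \eqref{B} for the equilibrium $\phi$), and then establish the one nontrivial hypothesis of \Cref{T2}, namely that the growth bound of the linearized semigroup $(\T_\phi(t))_{t\ge0}$ on $\E_\alpha$ is negative. The work thus splits into a (routine) verification of the structural assumptions and a (substantial) spectral analysis of the linearization.

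For the structural part, the key simplification is that $m=m(a,x)$ does not depend on the population, so I would absorb it into the elliptic part from the outset and work with $A(a)w:=\divv\big(d(a,\cdot)\nabla_x w\big)-m(a,\cdot)w$ on $W_{q,N}^2$. By \eqref{J1} and \eqref{J3} this gives $A\in C^\rho\big(J,\mathcal H(W_{q,N}^2,L_q)\big)$, hence \eqref{A1a}; the maximum principle yields the resolvent positivity \eqref{A1aa}; and \cite[II.Lemma 5.1.3]{LQPP} gives \eqref{EO} since $a_m<\infty$. With this choice the death nonlinearity $F$ from~\eqref{2} vanishes identically, so \eqref{A1c} is trivial, while the birth-rate conditions \eqref{A1b} and \eqref{B1}--\eqref{B2b} follow exactly as for problem~\eqref{P99} treated above, using the continuity of pointwise multiplication on the scale $W_{q,N}^{2\theta}$, the derivative formula~\eqref{diff}, and $\bar\phi\in W_{q,N}^2$ (because $\phi\in\E_1$ and $\nu\in C^{0,2}$); assumption \eqref{J4} also gives \eqref{A1d}.

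The heart of the matter is to check $\omega_\alpha(\phi)>0$ in~\eqref{C}, equivalently, by \Cref{R14}(a), that $(\T_\phi(t))_{t\ge0}$ has negative growth bound on $\E_\alpha$. Since $F=0$ one has $\partial F(\phi)=0$, so $(\T_\phi(t))_{t\ge0}$ coincides with the semigroup $(\mS(t))_{t\ge0}$ of \Cref{LemmaBB}, which is eventually compact on $\E_\alpha$ by \cite[Corollary 2.2]{WalkerIUMJ}; hence its growth bound equals the spectral bound $s_\alpha$ of its generator \cite[IV.Corollary 3.12]{EngelNagel}, whose spectrum is pure point spectrum \cite[V.Corollary 3.2]{EngelNagel}, and every such eigenvalue is also an eigenvalue on $\E_0$, so $s_\alpha\le s_0$. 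Under \eqref{pos} the semigroup is positive on $\E_0$ by \cite[Theorem 2.8]{WalkerIUMJ}, so, arguing as in \cite[Proposition 5.2]{WalkerIUMJ}, $s_0$ is attained as an eigenvalue with eigenfunction $\psi$ solving~\eqref{wqq}--\eqref{wq}; writing $\psi(a)=\Pi_{s_0}(a,0)\psi(0)$ and inserting into~\eqref{wq} shows $\psi(0)$ is a fixed point of the compact strongly positive operator $Q_{\phi,s_0}$, whence $r(Q_{\phi,s_0})\ge1$. As $\lambda\mapsto r(Q_{\phi,\lambda})$ is continuous and strictly decreasing (proved as in \cite[Lemma 2.4, Lemma 2.5]{WalkerMOFM}), the standing hypothesis~\eqref{ko} $r(Q_{\phi,0})<1$ forces $s_0<0$, hence $s_\alpha\le s_0<0$.

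Once the negative growth bound is in hand, \Cref{R14}(a) upgrades it to $\omega_\alpha(\phi)>0$ in~\eqref{C}, and \Cref{T2} applies verbatim, giving the asserted asymptotic exponential stability of $\phi$ in $\E_\alpha=L_1\big(J,W_{q,N}^{2\alpha}\big)$. I expect the only genuine obstacle to be the two spectral inputs imported from \cite{WalkerIUMJ,WalkerMOFM}: that the spectral bound $s_0$ of the positive, eventually compact semigroup is actually an eigenvalue (a Perron--Frobenius/Krein--Rutman statement tailored to this parabolic age-structured setting), and the strict monotonicity of $\lambda\mapsto r(Q_{\phi,\lambda})$; granting these, the reduction to the characteristic equation $r(Q_{\phi,\lambda})=1$ and the passage through \Cref{T2} are routine.
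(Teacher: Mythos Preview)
Your proposal is correct and follows essentially the same route as the paper: absorb $m(a,x)$ into $A(a)$ so that $F=0$ and $\T_\phi=\mS$, invoke eventual compactness from \cite{WalkerIUMJ} to equate growth and spectral bounds, use positivity (via~\eqref{pos}) to get that $s_0$ is an eigenvalue, reduce to the characteristic equation $r(Q_{\phi,s_0})\ge1$, and conclude $s_0<0$ from the monotonicity of $\lambda\mapsto r(Q_{\phi,\lambda})$ together with~\eqref{ko}. The structural verifications and cited inputs match the paper's argument almost verbatim.
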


\bibliographystyle{siam}
\bibliography{AgeDiff}

\end{document}